\numberwithin{equation}{section} \setcounter{tocdepth}{1}
\DeclareMathSymbol{\leqslant}{\mathalpha}{AMSa}{"36} 
\DeclareMathSymbol{\geqslant}{\mathalpha}{AMSa}{"3E} 
\DeclareMathSymbol{\eset}{\mathalpha}{AMSb}{"3F}     
\renewcommand{\leq}{\;\leqslant\;}                   
\renewcommand{\geq}{\;\geqslant\;}                   
\def\1{\ifmmode {1\hskip -3pt \rm{I}} \else {\hbox {$1\hskip -3pt \rm{I}$}}\fi}
\newtheorem{Theorem}{Theorem}[section]
\newtheorem{Lemma}[Theorem]{Lemma}
\newtheorem{Proposition}[Theorem]{Proposition}
\newtheorem{Corollary}[Theorem]{Corollary}
\newtheorem{remark}{Remark}[section]
\newtheorem{claim}[Theorem]{Claim}
\newtheorem{definition}[Theorem]{Definition}
\newcommand{\cA}{\ensuremath{\mathcal A}}
\newcommand{\cD}{\ensuremath{\mathcal D}}
\newcommand{\cF}{\ensuremath{\mathcal F}}
\newcommand{\cG}{\ensuremath{\mathcal G}}
\newcommand{\cL}{\ensuremath{\mathcal L}}
\newcommand{\cZ}{\ensuremath{\mathcal Z}}
\newcommand{\bbA}{{\ensuremath{\mathbb A}} }
\newcommand{\bbE}{{\ensuremath{\mathbb E}} }
\newcommand{\bbF}{{\ensuremath{\mathbb F}} }
\newcommand{\bbI}{{\ensuremath{\mathbb I}} }
\newcommand{\bbL}{{\ensuremath{\mathbb L}} }
\newcommand{\bbN}{{\ensuremath{\mathbb N}} }
\newcommand{\bbP}{{\ensuremath{\mathbb P}} }
\newcommand{\bbR}{{\ensuremath{\mathbb R}} }
\newcommand{\bbZ}{{\ensuremath{\mathbb Z}} }
\newcommand{\ent}{{\rm Ent} }
\newcommand{\var}{\operatorname{Var}}
\newcommand{\bra}{\langle}
\newcommand{\ket}{\rangle}
\let\a=\alpha \let\b=\beta   \let\d=\delta  \let\e=\varepsilon
 \let\g=\gamma \let\h=\eta      \let\l=\lambda
          \let\p=\pi  
  \let\s=\sigma \let\t=\tau   
     \let\L=\Lambda 
\let\O=\Omega      
\def\\{\hfill\break}
\def\thsp{\thinspace}
\def\tthsp{\kern .083333 em}
\def\?{\mskip -10mu}
\def\indbox#1{\hbox to \parindent{\hfil\ #1\hfil} }
\def\hexnumber#1{%
  \ifcase#1 0\or 1\or 2\or 3\or 4\or 5\or 6\or 7\or 8\or
  9\or A\or B\or C\or D\or E\or F\fi}
\font\tenmsa=msam10 \font\sevenmsa=msam7 \font\fivemsa=msam5
\edef\msafamhexnumber{\hexnumber\msafam}%
\mathchardef\restriction"1\msafamhexnumber16 \mathchardef\ssim"0218
\mathchardef\square"0\msafamhexnumber03
\mathchardef\eqd"3\msafamhexnumber2C
\def\QED{\ifhmode\unskip\nobreak\fi\quad
  \ifmmode\square\else$\square$\fi}
\font\tenmsb=msbm10 \font\sevenmsb=msbm7 \font\fivemsb=msbm5
\font\teneufm=eufm10 \font\seveneufm=eufm7 \font\fiveeufm=eufm5
\def\({\left(}
\def\){\right)}
\let\neper=e
\let\ii=i
\def\ie{\hbox{\it i.e.\ }}
\let\emp=\emptyset
\let\sset=\subset
\let\setm=\backslash
\def\nep#1{ \neper^{#1}}
\let\imp=\Rightarrow
\def\tc{\thsp | \thsp}
\def\Var{ \mathop{\rm Var}\nolimits }
\def\gap{\mathop{\rm gap}\nolimits}
\def\scalprod#1#2{ \thsp<#1, \thsp #2>\thsp }
\def\inte#1{\lfloor #1 \rfloor}
\outer\def\nproclaim#1 [#2]#3. #4\par{\medbreak \noindent
   \talato(#2){\bf #1 \Thm[#2]#3.\enspace }%
   {\sl #4\par }\ifdim \lastskip <\medskipamount
   \removelastskip \penalty 55\medskip \fi}
\def\thmm[#1]{#1}
\def\teo[#1]{#1}
\def\sttilde#1{%
\dimen2=\fontdimen5\textfont0 \setbox0=\hbox{$\mathchar"7E$}
\setbox1=\hbox{$\scriptstyle #1$} \dimen0=\wd0 \dimen1=\wd1
\advance\dimen1 by -\dimen0 \divide\dimen1 by 2
\vbox{\offinterlineskip%
   \moveright\dimen1 \box0 \kern - \dimen2\box1}
}
\begin{document}
\title[East Model]{The East model: recent results and new progresses}

\author{A. Faggionato}
\address{Alessandra Faggionato. Dip. Matematica ``G. Castelnuovo", Univ. ``La
  Sapienza''. P.le Aldo Moro  2, 00185  Roma, Italy. e--mail:
  faggiona@mat.uniroma1.it}

 \author[F. Martinelli]{F. Martinelli}
 \address{F. Martinelli. Dip. Matematica, Univ. Roma Tre, Largo S.L.Murialdo 00146, Roma, Italy. e--mail:
martin@mat.uniroma3.it
 }

 \author[C. Roberto]{C. Roberto}
 \address{Cyril Roberto. Modal'X, Univ. Paris Ouest Nanterre, 200 av R\'epublique 92000 Nanterre, France. e--mail:
cyril.roberto@math.cnrs.fr }

\author[C. Toninelli]{C. Toninelli}
\address{Cristina Toninelli. L.P.M.A. and
  CNRS-UMR 7599, Univ. Paris VI-VII 4, Pl. Jussieu 75252
  Paris, France. e--mail: cristina.toninelli@upmc.fr}

\thanks{\sl Work supported by the European Research Council through the ``Advanced
Grant'' PTRELSS 228032.}

\keywords{MCMC, Kinetically Constrained Models, East Model,
 Non-Equilibrium Dynamics, Coalescence, Metastability, Aging,
   Interacting Particle Systems, Spectral Gap, log-Sobolev, Large Deviations}

\subjclass{60K35, 82C20}

\begin{abstract}
The East model is a particular one dimensional interacting particle
system in which certain transitions are forbidden according to some
constraints depending on the configuration of the system. As such it
has received particular attention in the physics literature as a
special case of a more general class of systems referred to as
\emph{kinetically
  constrained models}, which
play a key role in explaining some features of the dynamics of
glasses. In this paper we give an extensive overview of  recent rigorous results concerning
the equilibrium and non-equilibrium dynamics of the East
model together with some new improvements.
 \end{abstract}

\maketitle

\thispagestyle{empty}

\section{Introduction}
Facilitated or kinetically constrained spin (particle) models (KCSM) are
interacting particle systems which have been introduced in the physics
literature \cite{FA1,FA2,JACKLE} to model liquid/glass transition and more
generally ``glassy dynamics'' (see  e.g. \cite{Ritort,GarrahanSollichToninelli}). A configuration is given by
assigning to each vertex $x$ of a (finite or infinite) connected graph
$\cG$ its occupation variable
$\eta(x)\in\{0,1\}$ which corresponds to an empty or filled site,
respectively.  The evolution is given by a Markovian stochastic dynamics
of Glauber type.  Each site with rate one refreshes its occupation variable to
a filled or to an empty state with probability $1-q$ or $q$
respectively provided that the current configuration around it satisfies an
a priori specified constraint.  For each site $x$ the corresponding constraint does not
involve $\h(x)$, thus detailed balance w.r.t. the Bernoulli($1-q$) product
measure $\pi$ can be easily verified and the latter is an invariant
reversible measure for the process.

One of the most studied KCSM is the East model
\cite{JACKLE}\footnote{\ Quite interestingly the East model plays a key
  role in certain random walks over the upper triangular matrices with
  entries in the field $\bbZ_n$ for $n$ prime \cite{Peres-Sly}.}.
It  is  a one-dimensional  model ($\cG=\bbZ$ or
$\cG=\bbZ_+=\{0,1,\dots\}$) and particle creation/annihilation at a
given site $x$ can occur only if the \emph{East neighbor} of $x$, namely
the vertex $x+1$, is empty. The model is ergodic for any $q \in (0,1)$
with a positive spectral gap \cite{Aldous,CMRT} and it relaxes to
the equilibrium reversible measure exponentially fast even when
started from e.g.\ any non-trivial product measure \cite{CMST}.
However, due to the fact that the rates can be zero, the East model
has specific features quite different from those of more common
systems. For example the relaxation time $T_{\rm relax}(q)$ diverges
very fast as $q\downarrow 0$, $T_{\rm
relax}\sim\left(1/q\right)^{\frac 12 \log_2(1/q)}$ (see
\cite{CMRT}), and several coercive inequalities stronger than the
Poincar\'e inequality (e.g.\  the logarithmic Sobolev inequality)
fail (see Section \ref{logsob} for more details).

A key issue, both from the mathematical and the physical point of view, is
therefore that of describing accurately the evolution at $q\ll 1$ when the initial distribution is different from the reversible one
and for time scales which are large but still much smaller
than $T_{\rm relax}(q)$ when the exponential relaxation to the
reversible measure takes over. A typical case, often referred
to in then physics literature as a quench from high to low
density of vacancies, is to take as starting distribution i.i.d.\ occupancy
variables with density $1/2$.
We refer the interested reader to
\cite{Ritort,Leonard,Crisanti,GarrahanNewman,CorberiCugliandolo} for
the relevance of this setting in connection with the study of the
liquid/glass transition as well as for details for KCMS different from East model.

As first suggested in the non-rigorous work \cite{SE1,SE2} and
recently mathematically established in \cite{FMRT-cmp}, the
non-equilibrium dynamics of the East model as $q\downarrow 0$ is
dominated by a metastable type of evolution in a energy landscape
with a hierarchical structure. Such metastable dynamics can in turn
be very well described by a hierarchical coarsening process
\cite{FMRT-cmp} for the  excess vacancies whose long time behavior can
be analyzed rigorously. Remarkably such a hierarchical coalescence
process (i) has exactly the same general structure of other
coalescence processes introduced in the physics literature for very
different situations (see e.g.\ \cite{D0,D1,D2}) and (ii) the form of
its universality classes can be mathematically established and
computed \cite{FMRT}. As a consequence one is able to draw almost exact
conclusions on the out-of-equilibrium dynamics of the East model \cite{FMRT-cmp}.


In this paper we mostly try to provide an extensive self-contained
review of the existing mathematical theory of the East model in the
various regimes. We also provide  the analysis of the   logarithmic
Sobolev inequaliy and some of its recently introduced modifications,
as well as some extension of the main theorems proved in
\cite{FMRT-cmp} for the low density non-equilibrium dynamics.

Finally, we stress that some of the results and/or the techniques that we present  are valid for more general KCSM (e.g.\ spectral gap, persistence, log-Sobolev); while others are related to the oriented and/or one dimensional character of the East Model.

\setcounter{tocdepth}{3}
\tableofcontents

\section{The East process: definition and construction}
\subsection{Notation}

Throughout all the paper we will use the notation
$\bbN:=\{1,2,\dots\}$ and $\bbZ_+:=\{0,1,2,\dots\}$. The
configuration space  for the East model is either
$\Omega:=\{0,1\}^{\bbZ}$ or $\Omega_\Lambda = \{0,1\}^\Lambda$ for
some (finite or infinite) subset $\Lambda \subset \bbZ$. Given a
parameter $q \in[0,1]$, for any $x \in \bbZ$, $\pi_x$  denotes a
Bernoulli$(1-q)$ measure, $\pi:=\prod_{x\in\bbZ}\pi_x$ and
$\pi_\Lambda:=\prod_{x\in\Lambda}\pi_x$ for $\Lambda \subset \bbZ$.
Also, we set  $p:=1-q$.

Elements of $\O$ will usually be denoted by the Greek letters
$\sigma,\eta,\dots$ and $\s(x)$ will denote the occupancy variable
at the site $x$: when $\sigma(x)=1$ we say that site $x$ is occupied
or filled (by a particle), while when $\sigma(x)=0$ we say  that
there is a vacancy (or no particle) at site $x$, or also that $x$ is
empty. The restriction of a configuration $\s$ to a subset $\L$ of
$\bbZ$ will be denoted by $\s_\L$.
 Given two sets $\Lambda, V$ and two configurations $\sigma$, $\sigma'$, $\sigma_\Lambda \sigma'_V$ denotes the configuration equal to $\sigma$
 on $\Lambda$ and to $\sigma'$ on $V$. The set of empty sites (or zeros) of a configuration $\s$ will be
denoted by $\cZ(\s)$.

 The mean with respect to $\p$  of a function $f $ on $\O$  
 is denoted by $\pi(f)$,
  while its  variance
is denoted by $\var(f)$. Similar definitions hold for
$\pi_\Lambda(f), \var_\Lambda (f) $ and $f$ a function on $\O_\L$. If
$f$ is a function on  $ \O$  we denote by $\pi_\Lambda(f)$ and $
\var_\Lambda (f) $ the mean and the variance of $f$ with respect to
the conditional probability $\p\bigl( \cdot | \{\s(y) \}_{y \in
\L}\bigr)= \p\bigl( \cdot | \s_\L\bigr)$.
 Namely, $\p_\Lambda (f)$ is the
mean of $f$ with respect to $\p_\L$ computed keeping fixed the
variables $\s(y)$, $y \not \in \L$. Similarly for $\var _\Lambda
(f)$.
 For simplicity, we set $\p_x(f):=
\p_{\{x\}}(f)$ and $\var_x(f):= \var_{\{x\}}(f)$.

Finally we introduce the entropy functional $\ent(f):=\pi(f \log
(f/\pi(f))$ for any non-negative function $f$, say in
$\bbL^2(\Omega,\pi)$, and similarly $\ent_\Lambda(f) = \pi_\Lambda(f
\log (f/\pi_\Lambda (f)) $.

\begin{remark}\label{temp}
In the physical literature, the parameter
$q$, which represents the density of vacancies as will become clear later, is written as $q=\frac{\nep{-\b}}{1+\nep{-\b}}$ where $\b$ is the inverse temperature. In particular, the limit $q\downarrow 0$ corresponds to the zero temperature limit.
\end{remark}

\subsection{Infinitesimal  generator of the East process}
The East  process can be informally described as follows. Each
vertex $x$ waits an independent mean one exponential time and then,
provided that the current configuration $\sigma$ satisfies the
constraint $\sigma(x+1)=0$, the value of $\sigma(x)$ is refreshed
according to $\pi_x$, {\it i.e.}\ it
 is set equal to $1$ with
probability $p=1-q$ and to $0$ with probability $q$. The process can
be rigorously constructed in a standard way, see \cite{Liggett1}.
Formally, it is univocally specified by the action of its
infinitesimal Markov generator $\cL$ on local  (\ie depending on
finitely many variables) functions $f \colon \O \mapsto \bbR$, given
by
\begin{align}
\label{thegenerator}
\cL f(\sigma)
& =
\sum_{x\in \bbZ}c_{x}(\sigma)\left[\pi_x(f)-f(\sigma)\right] \\
& =
\sum_{x\in \bbZ}c_{x} (\sigma)\left[(1-\sigma(x))p+\sigma(x)q\right]\left( f(\sigma^x) - f(\sigma) \right) \nonumber
\end{align}
where $c_x(\sigma):=1-\sigma(x+1)$ encodes the constraint,
and $\sigma^x$ is obtained from $\sigma$ by flipping its value
at $x$, {\it i.e.}
$$
\sigma^x(y)=\left\{
\begin{array} {ll}
\sigma(y) & \mbox{if } y \neq x \\
1-\sigma(x) & \mbox{if } y=x
\end{array}
\right. .
$$
The domain of $\cL$ is denoted by $\mathrm{Dom}(\cL)$. When the
initial distribution at time $t=0$ is $Q$, the law and expectation
of the process on the Skohorod space $D([0,\infty) , \O)$ will be
denoted by $\bbP_Q$ and $\bbE_Q$ respectively. If $Q=\d_{\s}$ we
write simply $\bbP_\s$. The process at time $t$ will be denoted by
$\sigma_t$.

The East process can  also be defined on intervals $\L=[a,b]\subset
\bbZ$  or half-lines $\L=(-\infty, b]\subset \bbZ$ provided that a
suitable \emph{zero} boundary condition is specified at the site
$b+1$. More precisely one defines the generator $\cL_\Lambda$
(called finite volume generator if $\Lambda$ is  finite), acting on
any local function $f\colon \Omega_\L \to \mathbb{R}$ as
\begin{gather}
\cL_{\L} f(\s) = \sum_{x\in \L}c_{x}(\s)\left[\mu_x(f)-f(\s)\right]+
\left[\mu_b(f)-f(\s)\right]
    \equiv \sum_{x\in \L}c_{x}^{\L}(\s)\left[\mu_x(f)-f(\s)\right]\, ,\nonumber\\
   \text{where}\quad c_x^{\L}(\s)=\begin{cases}
    1-\s(x+1) &\text{for $x\in\L \setminus \{b\}$}\\1 &\text{if $x=b$}
   \end{cases}\,.
\label{bersani}
\end{gather}
In particular there is no constraint at site $b$ as a \emph{frozen
zero} lies at site $b+1$. One can define the East process also on
half-lines $\L=[b, \infty)$, in this case the generator is given by
$$\cL_\L f (\s)= \sum _{x\in [b,
\infty)}c_{x}(\s)\left[\mu_x(f)-f(\s)\right]\,.$$


Due to the ``East''
  character of the constraint, taking $\L$ as above (\ie $\L=[a,b]$, $(-\infty,b]$, $[b, \infty)$), for any initial condition $\h\in \O_\L$
  the
  evolution  on the   interval $\L$ coincides with that of the East process on $\bbZ$ (restricted to $\L$)
  starting from the configuration
    \begin{equation}\label{estendiamo}
    \tilde \h (x):=
    \begin{cases} \h(x) & \text{ if } x\in \L\,,\\
    0 & \text{ if }x=b+1\,, \; b = \max \L\,,\\
     1 & \text{ otherwise}\,.
     \end{cases}
     \end{equation}
We will use the self-explanatory notation $\bbP_Q ^{\L}$ (or
$\bbP_\s^{\L}$) for the law of the process starting from the law $Q$ (from $\s$)
and $\sigma_t^\Lambda$ for the process at time $t$.

\medskip

Note that, by construction, the East process on $\bbZ$ (respectively
on $\Lambda$) is reversible with respect to $\pi$ (respectively
$\pi_\Lambda$). Analytically this is equivalent to say that $\cL$
(respectively $\cL_\Lambda$) is a self-adjoint operator in
$\bbL^2(\pi)$ (respectively $\bbL^2(\pi_\Lambda)$). Moreover, for
any   $f, g \in \mathrm{Dom}(\cL)$ (in particular, for any local
functions $f,g$), the Dirichlet form associated to the generator
$\cL$ is given by
$$
\cD(f,g):=\frac{1}{2} \sum_{x\in \bbZ} \sum_{\sigma \in \Omega}
\pi(\sigma) c_{x} (\sigma)\left[(1-\sigma(x))p+\sigma(x)q\right]
\left( f(\sigma^x) - f(\sigma) \right)\left( g(\sigma^x) - g(\sigma)
\right) \,.
$$
Similarly the Dirichlet form associated to the generator $\cL_\L$ is
given by
$$
\cD_\Lambda(f,g):=\frac{1}{2} \sum_{x\in \Lambda} \sum_{\sigma \in
\Omega_\L} \pi_\Lambda(\sigma) c_{x}^\Lambda
(\sigma)\left[(1-\sigma(x))p+\sigma(x)q\right] \left( f(\sigma^x) -
f(\sigma) \right)\left( g(\sigma^x) - g(\sigma) \right) \!.
$$
It is simple to check that
\begin{align}
& \cD(f):=\cD(f,f)= \sum_{x \in \mathbb{Z}} \pi \left( c_x \var_x
(f)
\right)\qquad f \in \mathrm{Dom} ( \cL )\,, \\
& \cD_\Lambda(f):=\cD_\Lambda(f,f)=\sum_{x \in \Lambda} \pi_\Lambda
\left( c_x^\Lambda \var_x (f) \right)) \qquad f \in \mathrm{Dom}(
\cL_\L) \,. \label{freddo2}
\end{align}
In what follows, when considering a local function $f $ on $\O$ we
denote by $\cD_\Lambda (f)$ the Dirichlet form of $f$ with respect
to $\cL_\L$ and  $\p_\L$ computed holding fixed the variables
$\{\s(y)\}_{y \not \in \L}$. In particular, \eqref{freddo2} still
holds due to our definition of $\p_\L(\cdot)$ and $\var_x (\cdot)$
for functions defined on $\O$.


Finally we introduce the associated Markov semigroup $P_t=e^{t \cL}$
which satisfies $P_t f(\sigma)=\mathbb{E}_\sigma(f(\sigma_t))$ and
similarly $P_t^\Lambda=e^{t \cL_\Lambda}$.


\subsection{Graphical construction} \label{graphical}
In this section we briefly recall a standard graphical construction
which allows to define on the same probability space the finite
volume East process for \emph{all} initial conditions. Using a
standard percolation argument, see \cite{Durrett,Liggett2}, together
with the fact that the constraints $c_x$ are uniformly bounded and
of finite range, it is not difficult to see that the graphical
construction can be extended  to any infinite volume.

Given a finite interval $\L\sset \bbZ$ we associate to each $x\in \L$  a
Poisson process  of parameter one  and, independently, a family of
independent Bernoulli$(1-q)$  random variables $\{s_{x,k}:k\in
\bbN\}$ (coin tosses). The occurrences of the Poisson process associated to $x$ will
be denoted by $\{t_{x,k}:\ k\in \bbN\}$. We assume independence as $x$
varies in $\L$. Notice that with probability one all the
occurrences $\{t_{x,k}\}_{k\in \bbN,\, x\in \Lambda}$ are
different. This defines the probability space. The corresponding
probability measure will be denoted by $\bbP$. Given an initial
configuration $\h\in \O$ we construct a Markov process
$(\s_t^{\L,\h})_{t\ge 0}$ on the above probability space satisfying
$\s^{\L,\h}_{t=0}=\h$  according to the
following rules.
At each time $t=t_{x,n}$ the site $x$ queries the state of its own
constraint $c^\L_x$. If the constraint  is satisfied, \ie if
$\s^{\L,\h}_{t-}(x+1)=0$, then $t_{x,n}$ will be  called a \emph{legal ring} and
at time $t$ the configuration resets its value at site $x$  to the value of the corresponding Bernoulli variable
$s_{x,n}$. We stress here that
the rings and coin tosses at $x$ for $s\le t$ have no influence whatsoever on the
evolution of the configuration at the sites which enter in its constraint (here
$x + 1$) and thus they have no influence of whether a ring at $x$ for $s > t$ is legal
or not. It is classical to see that the above construction actually gives a
continuous time Markov chain with generator \eqref{bersani}.

A simple but important consequence of the graphical
construction is the following one. Assume
that the zeros of the starting configuration $\s$ inside $\Lambda$ are labeled in
increasing order as $x_0,x_1,\dots,x_n$ and define $\t$ as the first time
at which one the $x_i$'s is killed, \ie the occupation variable there flips to
one. Then, up to time $\t$ the East dynamics factorizes over the East
process in each interval $[x_i, x_{i+1})$.

\section{Main results}

In this section we collect the most relevant rigorous results on the
East model. The main references are
\cite{Aldous,CMRT,CMST,FMRT,FMRT-cmp,bodineau-toninelli}, while two
results are new: the analysis of the $\a$--log--Sobolev inequalities
(see Section \ref{logsob})  and an extension of the results of
\cite{FMRT-cmp} (see Section \ref{sec:outIIintro}).

\subsection{Spectral gap}
The finite volume East process is trivially ergodic because of the
frozen zero boundary condition (see \eqref{bersani}). The infinite
volume process in $\bbZ$ is also ergodic in the sense that $0$ is a
simple eigenvalue of the generator $\cL$, as proved in \cite{CMRT}.
This last property implies that the process converges to
equilibrium, in $\bbL^2(\pi)$. More precisely (see e.g.\
\cite[Theorem 4.13, Ch IV]{Liggett1}), the following classical equivalence
holds.
\begin{Theorem}
\label{ergodic}
  The following properties are equivalent:
  \begin{enumerate}[(a)]
  \item $\lim_{t\to \infty} \bbE_\sigma(f(\sigma_t))=\pi(f)$ in $\bbL^2(\pi)$ for all
    $f\in \bbL^2(\pi)$;
\item $0$ is a simple eigenvalue for $\cL$.
  \end{enumerate}
\end{Theorem}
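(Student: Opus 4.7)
The equivalence is a standard consequence of the self-adjointness of $\cL$ on $\bbL^2(\pi)$ together with the spectral theorem. My plan would be to prove $(b)\Rightarrow(a)$ via the spectral decomposition and $(a)\Rightarrow(b)$ by a short direct argument.

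For $(b)\Rightarrow(a)$, the plan is to exploit that $-\cL$ is a non-negative self-adjoint operator on $\bbL^2(\pi)$ (non-negativity follows from the explicit Dirichlet form $\cD(f)=\langle f,-\cL f\rangle_\pi\geq 0$ displayed earlier in the paper). Let $\{E_\lambda\}_{\lambda\geq 0}$ denote its spectral resolution. Constants always belong to $\ker\cL$, so the simplicity assumption in (b) means $\ker\cL$ is exactly the one-dimensional subspace of constant functions, i.e.\ $E_{\{0\}}$ is the orthogonal projection $f\mapsto \pi(f)$. For any $f\in\bbL^2(\pi)$ I would split $f=\pi(f)+g$ with $g:=f-\pi(f)$ orthogonal to constants, and write
\begin{equation*}
P_tf-\pi(f)=e^{t\cL}g=\int_{(0,\infty)} e^{-t\lambda}\,dE_\lambda g.
\end{equation*}
Since $E_{\{0\}}g=0$, the spectral measure of $g$ is concentrated on $(0,\infty)$, so dominated convergence at the level of the spectral measure yields $\|P_tf-\pi(f)\|_{\bbL^2(\pi)}^2=\int_{(0,\infty)}e^{-2t\lambda}d\langle E_\lambda g,g\rangle\to 0$ as $t\to\infty$.

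For $(a)\Rightarrow(b)$, the argument is immediate: suppose, toward a contradiction, that $\ker\cL$ contains a function $g$ which is not $\pi$-a.s.\ constant. Since $g\in\mathrm{Dom}(\cL)$ with $\cL g=0$, the semigroup leaves it fixed, $P_tg=g$ for every $t\geq 0$. Consequently $\bbE_\sigma(g(\sigma_t))=g(\sigma)$ for $\pi$-a.e.\ $\sigma$, so assumption (a) applied to $f=g$ forces $g=\pi(g)$ in $\bbL^2(\pi)$, contradicting non-constancy. Hence any eigenfunction at $0$ is a constant and (b) holds.

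There is really no major obstacle: the only subtle point is justifying the use of spectral calculus, which rests on $\cL$ being self-adjoint on $\bbL^2(\pi)$ with dense domain. Self-adjointness (as opposed to mere symmetry) in the infinite-volume setting is the one item I would have to invoke carefully; it can be obtained from the general Hille--Yosida theory for Markov semigroups on $\bbL^2(\pi)$ together with reversibility of $\pi$ (or deduced directly from the graphical construction and the fact that $\cL$ is essentially self-adjoint on the core of local functions, by an approximation using the finite-volume generators $\cL_\Lambda$ from \eqref{bersani}). Once that is in hand the two implications above complete the proof, matching the reference to \cite[Thm.~4.13, Ch.~IV]{Liggett1}.
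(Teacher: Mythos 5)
Your argument is correct and is essentially the canonical one: the paper itself gives no proof of Theorem \ref{ergodic}, presenting it as a classical equivalence and citing \cite[Theorem 4.13, Ch.~IV]{Liggett1}, and the spectral-theorem argument you give (decompose $f=\pi(f)+g$, note that simplicity of the eigenvalue $0$ forces the spectral measure of $g$ to live on $(0,\infty)$, then apply dominated convergence; conversely a non-constant harmonic $g$ is fixed by $P_t$ and so cannot converge to $\pi(g)$) is exactly the standard proof underlying that reference. Your care about genuine self-adjointness of $\cL$ on $\bbL^2(\pi)$, as opposed to mere symmetry on local functions, is the right point to flag and is resolved as you indicate.
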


 The next natural question that arises  is how fast the process converges to equilibrium. The classical tool to
answer such a question is a spectral gap estimate.
We recall that  the spectral gap (or inverse of the relaxation time)
of the generator $\cL$ is defined as
\begin{equation}\label{def_gap}
\gap(\cL):= \inf_{\genfrac{}{}{0pt}{}{f \in \mathrm{Dom}(\cL)}{f
\neq \mathrm{const}}} \frac{\cD(f)}{\var(f)}=
\inf_{\genfrac{}{}{0pt}{}{f \mathrm{ local } }{f \neq
\mathrm{const}}} \frac{\cD(f)}{\var(f)}\,.
\end{equation}
Similarly  one defines the spectral gap, $\gap(\cL_\Lambda)$, of the
generator $\cL_\Lambda$  for $\Lambda \subset \bbZ$. It is
well-known (see {\it e.g.}\ \cite[Chapter 2]{ane}) that $\gap(\cL) \geq \gamma$ for some $\gamma>0$
is equivalent to the following exponential decay of the semigroup:
$$
\var(P_t f) = \int [\bbE_\sigma(f(\sigma_t)) - \pi(f)]^2
d\pi(\sigma) \leq \var(f) e^{-2\gamma t} \qquad \forall t >0, \quad
\forall f    \in \mathrm{Dom}(\cL) .
$$
The next result asserts that $\gap (\cL) >0$ so that the process indeed converges to equilibrium exponentially fast, in $\bbL^2(\pi)$.
Moreover one can compute the precise asymptotic of $\gap (\cL)$ in the limit $q \downarrow 0$.

\begin{Theorem}[\cite{Aldous,CMRT,CMST}] \label{th:gap}
The following holds:
\begin{itemize}
\item[(i)]
The generator $\cL$ has a positive spectral gap, \ie $\gap(\cL) >0$;
\item[(ii)] The asymptotic of $\gap(\cL)$ for $q \downarrow 0$ is
given by
\begin{equation*}
\lim_{q\downarrow
0}\log(\gap(\cL)^{-1})/\left(\log(1/q)\right)^2=(2\log 2)^{-1}\,;
\end{equation*}
\item[(iii)]
For any interval $\L \subset \bbZ$, the spectral gap of the finite
volume generator $\cL_\L$ is not smaller than $\gap(\cL)$, \i.e.
$\gap(\cL_\Lambda) \geq \gap(\cL)\,$.
\end{itemize}
\end{Theorem}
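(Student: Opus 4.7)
I address the three claims in the order (iii), (i), (ii). Statement (iii) is a direct Dirichlet-form comparison; together with a finite-volume renormalization it reduces (i) and (ii) to quantitative questions on $\gap(\cL_{[1,n]})^{-1}$. Item (i) is qualitative and follows from such a renormalization scheme, while (ii) requires tracking constants in it together with a matching lower bound from an explicit hierarchical test function. The main obstacle throughout is the oriented ``frozen-one'' nature of the East constraint: the dynamics on a sub-interval is effectively blocked at its right boundary unless a vacancy is present there, an event of probability $q$.

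For (iii), let $\L=[a,b]$ and let $f$ be local on $\O_\L$, viewed as a function on $\O$ via trivial extension. The product structure of $\pi$ gives $\var(f)=\var_\L(f)$ and $\var_x(f)=0$ for $x\notin\L$. Inside $\L\setminus\{b\}$ the constraints coincide, $c_x=c_x^\L$, so $\pi(c_x\var_x f)=\pi_\L(c_x^\L\var_x f)$. At $x=b$, $c_b=1-\sigma(b+1)$ is independent of the $\L$-variables on which $\var_b f$ depends, hence
\begin{equation*}
\pi(c_b\var_b f)=q\,\pi_\L(\var_b f)\leq \pi_\L(c_b^\L\var_b f),
\end{equation*}
since $c_b^\L\equiv 1$. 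Summing gives $\cD(f)\leq \cD_\L(f)$, and applying the infinite-volume Poincar\'e inequality $\var(f)\leq \gap(\cL)^{-1}\cD(f)$ yields $\gap(\cL_\L)\geq \gap(\cL)$. The same argument applies to half-lines.

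For (i), by (iii) it suffices to show that $\gamma_n:=\gap(\cL_{[1,n]})^{-1}$ is bounded uniformly in $n$. I would use a dyadic bisection: writing $[1,2\ell]=A\cup B$ with $A=[1,\ell]$ and $B=[\ell+1,2\ell]$, the variance decomposition
\begin{equation*}
\var_{[1,2\ell]}(f)=\pi\bigl[\var_A(f\mid \sigma_B)\bigr]+\var_B\bigl[\pi_A(f\mid\sigma_B)\bigr]
\end{equation*}
reduces the problem to each half. The second term is a function of $\sigma_B$ only and is controlled by $\gamma_\ell$ together with convexity of Dirichlet forms. The first term requires a Poincar\'e inequality on $A$ with boundary $\sigma_B$, which is ergodic only on $\{\sigma(\ell+1)=0\}$, an event costing a factor $q$. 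The hard part is to prevent these factors $1/q$ from accumulating catastrophically at the $\log_2 n$ scales. Following \cite{CMRT}, this is handled by choosing block sizes growing fast enough that each block almost surely contains several vacancies and by coupling the bisection with a path/comparison argument (or with an auxiliary long-range constraint chain). The resulting recursion has summable multiplicative errors, giving $\sup_n \gamma_n<\infty$ and hence $\gap(\cL)>0$.

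For (ii), the same recursion tracked quantitatively yields the upper bound $\log\gap(\cL)^{-1}\leq (1+o(1))(2\log 2)^{-1}(\log(1/q))^2$. The heuristic is that emptying the right boundary of a block of length $2^k$ requires an energy barrier of $k$ simultaneously present hierarchically-nested vacancies, of equilibrium cost $q^k$; balancing the barrier depth $\log_2\ell$ against the typical inter-vacancy spacing $1/q$ fixes the optimal block length $\ell^\star\approx(1/q)^{\log_2(1/q)/2}$ and produces the exponent $(2\log 2)^{-1}$. For the matching lower bound on $\gap(\cL)^{-1}$ one exhibits a test function $f$ concentrated on configurations realizing the hierarchical barrier (for instance the indicator of an all-ones block of length $\ell^\star$, or a suitable weighted hierarchical counterpart) and computes $\var(f)/\cD(f)$, using that under $\pi$ only a fraction $\lesssim q^{\log_2\ell^\star}$ of flips are legal and change the value of $f$. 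Combining the two bounds yields the stated limit.
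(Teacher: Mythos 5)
Your arguments for (iii) and the qualitative part of (i) are essentially those of the paper: (iii) is the monotonicity Proposition~\ref{prop:monotony} (product structure gives $\var_V(f)=\var_\L(f)$ and $c_x^\L\leq c_x^V$ gives $\cD_\L(f)\leq\cD_V(f)$), and your sketch of (i) correctly identifies the bisection--constrained scheme of \cite{CMRT} --- two overlapping halves, the first refreshed only when the overlap strip $I$ of width $\delta_k\sim 2^{k(1-3\delta)}$ contains a vacancy, with the constrained block chain playing the role of your auxiliary long-range chain and the average over $s_k$ splits making the error $(1+1/s_k)$ summable. Tracking constants in that recursion gives the upper bound on $\gap(\cL)^{-1}$ in (ii) (Theorem~\ref{th:cmrt-gap}), roughly as you describe.

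The genuine gap is the matching lower bound on $\gap(\cL)^{-1}$ (Theorem~\ref{th:cmst-gap}). The test function you propose does not work: for $f=\mathds{1}_{\{\sigma(x)=1\ \forall x\in[1,L]\}}$ one has $\var(f)\approx p^L$, while the only site contributing to $\cD(f)$ is $x=L$ (a flip at $x<L$ would require $\sigma(x+1)=0$, incompatible with $f(\sigma)\neq 0$), so $\cD(f)=p^Lq^2$ and $\var(f)/\cD(f)\approx q^{-2}$, nowhere near $(1/q)^{\log_2(1/q)/2}$; the ``weighted hierarchical counterpart'' you gesture at is exactly what is missing. Worse, your heuristic is purely energetic (``only a fraction $\lesssim q^{\log_2\ell^\star}$ of flips are legal''); as the remark after Theorem~\ref{th:gap} explicitly warns, a pure energy-barrier count reproduces the factor-of-two error of \cite{SE1,SE2} and gives $(\log 2)^{-1}$ instead of $(2\log 2)^{-1}$. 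The missing ingredient is the \emph{entropy}: the number $|V(n)|\leq 2^{\binom{n}{2}}n!\,c^n$ of configurations reachable from $\mathds{1}$ with at most $n$ simultaneous vacancies (Proposition~\ref{prop:comb}(ii)) offsets half of the energetic cost $q^n$. The paper's proof uses no test function at all but a hitting-time estimate: Lemma~\ref{prop:amine} gives $\bbP^\L_{\pi_\L}(\tau_A>t)\leq e^{-t\,\gap(\cL_\L)\,\pi_\L(A)}$ for $A=\{\eta(0)=0\}$ on $\L=[0,1/q)$, while Proposition~\ref{prop:comb}(i) forces any path from $\mathds{1}$ to $A$ to pass through a configuration with $n=\lfloor\log_2(1/q)\rfloor$ simultaneous vacancies, whose $\pi_\L$-measure is $\leq |V(n)|q^n\leq q^{(n/2)(1+o(1))}$. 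This combinatorial, hitting-time input is the heart of the argument and has no counterpart in your proposal.
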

\begin{remark}
Points  $(i)$ and $(iii)$ have been proven for the first time by
Aldous and Diaconis in \cite{Aldous}. These authors also showed the
correct upper bound in $(ii)$ together with a lower bound that is
off by a factor $1/2$. This wrong factor $1/2$ also appeared in the
conjectured behavior of the relaxation time suggested in the physics
literature \cite{SE1,SE2} and based on simple energy barriers
considerations. The discrepancy with the correct   asymptotic as
given in (ii) is mainly due to neglecting an important contribution
coming from the entropy (\ie number of ways to overcome the energy
barrier). The matching lower bound was proven in \cite{CMRT} by a
completely novel approach while an alternative (and somehow very
natural) proof of the upper bound can also be found in \cite{CMST}.
The necessary techniques will be developed in Section
\ref{sec:coercive} where the reader will find the complete proof of
Theorem \ref{th:gap}.

The techniques developed in \cite{CMRT} to prove the positivity of the spectral gap (Item $(i)$)
are actually valid for a wide class of KCSM (not necessarily one dimensional).
\end{remark}

\subsection{Persistence function}
We now consider  the persistence function $F(t)$ which represents
the probability for the equilibrium process that the occupation
variable at the origin does not change before time $t$. More
precisely (see e.g.\ \cite{Ha,SE2}) the persistence function is
defined by
\begin{equation} \label{eq:Pers}
F(t):=\int d\pi(\h)\; \bbP_\h (\s_0(s)=\h_0,\; \forall s\le t) .
\end{equation}

In \cite{CMRT}, using a Feynman-Kac formula approach, it is  proved that
$F(t)$ decays exponentially fast as predicted in the physics literature.
\begin{Theorem}(\cite{CMRT}) \label{th:persistence}
It holds
$$
F(t) \leq 2 \exp\left\{-\frac{\gap(\cL) \min(p,q)}{4} t \right\} \qquad \qquad \forall t \geq 0 .
$$
\end{Theorem}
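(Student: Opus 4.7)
The plan is to reduce the persistence to a Feynman--Kac expectation for the East process and then estimate that expectation via the spectral gap established in Theorem \ref{th:gap}.

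The key observation is that the oriented constraints $c_x=1-\s(x+1)$, $x\ge 1$, do not involve $\s(0)$, so the trajectory $\{\s_s(y):y\ge 1\}$ is autonomous and independent of what happens at site $0$. Conditioning on $\eta(0)\in\{0,1\}$, the flip rate at site $0$ is $p(1-\s_s(1))$ if $\eta(0)=0$ and $q(1-\s_s(1))$ if $\eta(0)=1$; a direct Dynkin/graphical computation then gives
\begin{equation*}
F(t)=q\,\bbE_\pi\!\left[e^{-p\int_0^t(1-\s_s(1))\,ds}\right]+p\,\bbE_\pi\!\left[e^{-q\int_0^t(1-\s_s(1))\,ds}\right]\le \bbE_\pi\!\left[e^{-r\int_0^t(1-\s_s(1))\,ds}\right],
\end{equation*}
with $r:=\min(p,q)$, using $p+q=1$ and $e^{-pI},e^{-qI}\le e^{-rI}$ in the last step.

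Setting $V(\s):=r(1-\s(1))$ and $H:=-\cL+V$ as a self-adjoint operator on $\bbL^2(\pi)$, the spectral theorem yields
$$\bbE_\pi\!\left[e^{-\int_0^t V(\s_s)\,ds}\right]=\langle 1,e^{-tH}1\rangle_\pi\le e^{-t\lambda_0(H)},\qquad \lambda_0(H)=\inf_{f\ne 0}\frac{\cD(f)+\pi(Vf^2)}{\pi(f^2)}.$$
The real work is thus to prove $\lambda_0(H)\ge \gap(\cL)\,r/4$, the factor $2$ in the statement being harmless slack absorbing constants at this step.

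For this last inequality I would split $f=f_0\1_{\{\s(1)=0\}}+f_1\1_{\{\s(1)=1\}}$ with $f_0,f_1$ independent of $\s(1)$, and denote by $m_i,v_i$ the mean and variance of $f_i$ under the product measure restricted to sites $y\ne 1$. A short computation gives $\var(f)=pq(m_0-m_1)^2+qv_0+pv_1$ and $\pi(Vf^2)=rq(m_0^2+v_0)$; combined with $\cD(f)\ge \gap(\cL)\var(f)$ from Theorem \ref{th:gap}, the desired bound reduces to nonnegativity of a quadratic form, the coefficients of $v_0,v_1$ being immediate. The main obstacle is the $2\times 2$ block in $(m_0,m_1)$: its determinant condition reduces to $\gap(\cL)+r-\gap(\cL)r/4\le 4q$, which holds thanks to $\gap(\cL)\le q$ (via the test function $\s(1)$) and $r\le q$. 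All other steps are routine.
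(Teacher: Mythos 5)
Your proof is correct, and I checked the two computations it hinges on. The identity $F(t)=q\,\bbE_\pi\bigl[e^{-p\int_0^t(1-\s_s(1))\,ds}\bigr]+p\,\bbE_\pi\bigl[e^{-q\int_0^t(1-\s_s(1))\,ds}\bigr]$ is valid because the constraint at the origin only looks eastward, so the trajectory $\{\s_s(y)\}_{y\ge 1}$ is autonomous and independent of $\eta(0)$, and conditionally on it the flips at $0$ form an inhomogeneous Poisson process with the stated intensity; and your determinant condition does reduce to $\gap(\cL)+r-\gap(\cL)r/4\le 4q$ (with the diagonal entries of the block nonnegative because $r\le q$), which holds since $\gap(\cL)\le q$ (test function $\s(1)$: $\cD(\s(1))=q\cdot pq=q\var(\s(1))$) and $r=\min(p,q)\le q$. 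The paper's proof shares the overall architecture --- Feynman--Kac plus a principal-eigenvalue estimate for $\cL$ perturbed by a potential, controlled through the spectral gap --- but executes both halves differently. It splits $F=F_0+F_1$, applies an exponential Chebyshev bound to $\int_0^t \s_0(s)\,ds$ with a free parameter $\l$ (eventually $\l=\gap(\cL)/2$), and bounds the \emph{top} of the spectrum of $H_\l=\cL+\l V$, $V$ being multiplication by $\s_0$, by decomposing $f=\a{\bf 1}+g$ with $g\perp{\bf 1}$ and optimizing over $\a$; this gives $F_1(t)\le e^{-t\gap(\cL)q/(2(1+p))}$ and $F_0(t)\le e^{-t\gap(\cL)p/(2(1+p))}$, whence the prefactor $2$. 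Your route yields the slightly cleaner $F(t)\le e^{-t\gap(\cL)\min(p,q)/4}$ with no prefactor, but at the price of generality: the exact conditioning on the autonomous evolution of site $1$ uses the oriented character of the East model in an essential way, whereas the paper's Chebyshev reduction and mean/fluctuation splitting never need the constraint to be one-sided, which is why (as remarked after the theorem) the paper's argument extends to general KCSM.
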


The proof of the latter is given in Section \ref{sec:persistence}.
As for the positivity of the spectral gap, note that the exponential decay of the persistence function holds for more general KCSM
\cite{CMRT}.

\subsection{Log-Sobolev constant}
\label{logsob}

The next step in understanding the long-time behavior of the East model is the study of the log-Sobolev constant.
This coercive constant is usually used to prove exponential decay in
the  sup-norm (and therefore in a stronger sense compared to $\mathbb{L}^2(\pi)$),
see \cite{holley-stroock,SFlour}), by means of the celebrated
hypercontractivity property. Unfortunately, for the East Model the log-Sobolev constant in infinite volume does not exist.

In fact, a whole family of Sobolev type inequalities does not hold. This family is called $\alpha$-log-Sobolev inequalities,
$\alpha \in [0,2]$ being a parameter. In  a finite interval $\Lambda$ they are defined as follows.

Given $\alpha \in (0,2]\setminus \{1\}$, one says that $\pi_\Lambda$ satisfies the $\alpha$-log-Sobolev inequality if
there exists some constant $C_\alpha(\Lambda) \in (0,\infty)$ such that,
for any $f \colon \Omega_\Lambda \to \mathbb{R}$, it holds
\begin{equation} \label{ls}
\ent_{\Lambda}(f) \leq \frac{\alpha \alpha' C_\alpha(\Lambda)}{4} \mathcal{D}_\Lambda (f^{1/\alpha},f^{1/\alpha'})
\end{equation}
where $\alpha'$ is the dual exponent of $\alpha$, {\it i.e.}\ such that $\frac{1}{\alpha} + \frac{1}{\alpha'} =1$.
Observe that, since $\alpha$ may belong to $(0,1)$, $\alpha'$ may be negative. However, due to the multiplicative factor
$\alpha'$, the right hand side is always non-negative. The
$\a$-log-Sobolev inequalities with $\a=1$ or $\a=0$ are defined
by a limiting procedure.

Such a family has been introduced in \cite{mossel} as an interpolating family from the log-Sobolev inequality
to the Poincar\'e inequality.
Indeed, for $\alpha=2$ inequality \eqref{ls} reduces to the standard log-Sobolev inequality of Gross \cite{gross}. Also, in
\cite[Section 4]{mossel}, it is proved that the $0$-log-Sobolev inequality, with constant $C_0$,
is precisely equivalent to the standard Poincar\'e inequality with constant $C_0/2$.
Moreover, the limiting case $\alpha=1$ is equivalent to the following inequality independently studied in the literature:
\begin{equation} \label{lsm}
\ent_{\Lambda}(f) \leq \frac{C_1(\Lambda)}{4} \mathcal{D}_\Lambda (f, \log f) .
\end{equation}
The latter\footnote{Inequality \eqref{lsm} is sometimes called "modified logarithmic Sobolev inequality"
\cite{bobkov-ledoux,gao-quastel,goel,bobkov-tetali} or "entropy
inequality" \cite{daipra,caputo-posta,caputo}. Recently a yet new
name, \emph{$1$-log-Sobolev
inequality}, has been introduced always for the same object.}
has been introduced in \cite{bobkov-ledoux} to study the concentration phenomenon of birth and death processes on the integer line. It is known \cite{diaconis-saloff-coste} that \eqref{lsm} is actually equivalent to the following exponential decay
to equilibrium of the semi-group, in the entropy sense:
$$
\ent_{\Lambda}(P_t f) \leq e^{-4t/C_1(\Lambda)}  \ent_{\Lambda}(f) \qquad \qquad \forall t \geq 0 .
$$
Hence a control on the constants $C_\alpha(\Lambda)$ may reveal to be crucial in the study of the long-time behavior of the dynamics,
specially for $\alpha=1,2$.

In the next theorem, we prove that, for any $\alpha \in (0,2]$ the
constant $C_\alpha(\Lambda)=\O(|\L|)$ compares to $|\Lambda|$. Thus,
in order to get exponential decay to equilibrium either in the
sup-norm or in the entropy sense, one has to use alternative
strategies. One of them will be developed in the next section. Also,
Theorem \ref{th:ls} below answers partially to a question asked to
us by Krzysztof Oleszkiewicz (see \cite[Section 12]{mossel}), namely
about the existence of an example for which the Poincar\'e
Inequality holds while none of the $\alpha$-log-Sobolev
inequalities, $\alpha \in (0,2]$, hold.

\begin{Theorem} \label{th:ls}
Fix $\alpha \in (0,2]$ and a finite interval $\Lambda$ of $\bbZ$. Let $C_\alpha(\Lambda)$ be the best possible ({\it i.e.}\ the smallest) constant in Inequality \eqref{ls}.
Then, there exists a constant $c$ (that may depend on $q$ and $\alpha$) such that
$$
\frac{1}{c} |\Lambda| \leq C_\alpha(\Lambda) \leq c |\Lambda| .
$$
\end{Theorem}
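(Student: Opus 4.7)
The plan is to prove the upper and lower bounds separately. The upper bound exploits the uniform spectral gap of Theorem \ref{th:gap} combined with a comparison between the $\alpha$- and $2$-log-Sobolev functionals; the lower bound rests on a bottleneck test function, concentrated on the all-ones configuration, whose entropy grows linearly in $|\Lambda|$ while its cross Dirichlet form is kept of order $1$ by the east constraint.

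For the upper bound I first handle $\alpha=2$ by the classical Diaconis--Saloff-Coste bound: on any reversible finite Markov chain with spectral gap $\lambda$ and minimum stationary mass $\pi_\ast$, $C_2\leq \log((1-\pi_\ast)/\pi_\ast)/[(1-2\pi_\ast)\lambda]$. Theorem \ref{th:gap}(iii) provides $\gap(\cL_\Lambda)\geq\gap(\cL)>0$ uniformly in $\Lambda$, and the trivial bound $\pi_\ast\geq (p\wedge q)^{|\Lambda|}$ produces $\log(1/\pi_\ast)\leq |\Lambda|\log(1/(p\wedge q))$; hence $C_2(\Lambda)\leq c|\Lambda|$. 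To reach every $\alpha\in(0,2]$ I then use the classical pointwise inequality
\begin{equation*}
\frac{\alpha\alpha'}{4}\bigl(a^{1/\alpha}-b^{1/\alpha}\bigr)\bigl(a^{1/\alpha'}-b^{1/\alpha'}\bigr)\;\geq\;\bigl(\sqrt{a}-\sqrt{b}\bigr)^2,\qquad a,b>0,
\end{equation*}
valid on the whole interval $(0,2]$ (a standard monotonicity property of the $\Phi$-entropy family); summed against $\pi_\Lambda c_x^\Lambda r_x$, it becomes $\cD_\Lambda(\sqrt f)\leq(\alpha\alpha'/4)\cD_\Lambda(f^{1/\alpha},f^{1/\alpha'})$, so that $C_\alpha(\Lambda)\leq C_2(\Lambda)\leq c|\Lambda|$.

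For the lower bound the test function is $f_0:=p^{-|\Lambda|}\1_A$ with $A=\{\sigma\equiv 1\text{ on }\Lambda\}$, which satisfies $\pi_\Lambda(f_0)=1$ and $\ent_\Lambda(f_0)=|\Lambda|\log(1/p)$. The east constraint does all the work in the computation of $\cD_\Lambda(f_0^{1/\alpha},f_0^{1/\alpha'})$: any nonzero term at a site $x$ requires one of $\sigma,\sigma^x$ to lie in $A$, which forces $\sigma(x+1)=1$ and kills $c_x^\Lambda=1-\sigma(x+1)$ for every $x<b:=\max\Lambda$; only the rightmost site (where $c_b^\Lambda\equiv 1$) survives, and the two resulting contributions (flip out of $A$, flip into $A$) each evaluate to $q$, so that $\cD_\Lambda(f_0^{1/\alpha},f_0^{1/\alpha'})=q$ independently of $\alpha$ and $|\Lambda|$. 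Inserting this into \eqref{ls} gives
\begin{equation*}
C_\alpha(\Lambda)\;\geq\;\frac{4\,|\Lambda|\log(1/p)}{q\,\alpha\,\alpha'}\,,
\end{equation*}
the required linear-in-$|\Lambda|$ bound throughout the range $\alpha\in(1,2]$.

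The main obstacle is the singular regime $\alpha\in(0,1]$: for $\alpha<1$ one has $1/\alpha'<0$, so $f_0^{1/\alpha'}$ diverges on $A^c$, and the borderline $\alpha=1$ is equally singular (the equivalent form $\ent_\Lambda(f)\leq (C_1/4)\cD_\Lambda(f,\log f)$ blows up on $f_0$). My plan is to substitute a positive perturbation $f_\epsilon:=p^{-|\Lambda|}\1_A+\epsilon$ and to optimise over $\epsilon>0$, possibly reinforcing the argument by means of the nested hierarchy of bottlenecks $A_k:=\{\sigma(y)=1\text{ for all } y\geq b-k+1\}$, $k=0,\dots,|\Lambda|-1$, each of which inherits from $A$ the feature that its boundary is concentrated at a single right-hand site. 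The delicate point is to balance the entropy gain of order $|\Lambda|\log(1/p)$ against the blow-up of the cross term $\epsilon^{1/\alpha'}$ as $\epsilon\downarrow 0$, so that a linear-in-$|\Lambda|$ lower bound persists through the full interval $\alpha\in(0,2]$ at the price of a constant $c$ that is allowed to depend on $\alpha$.
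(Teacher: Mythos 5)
Your upper bound is essentially the paper's argument: both reduce to the $\alpha=2$ case via the monotonicity of $\alpha$-log-Sobolev constants (the paper invokes Theorem~1.8 of Mossel et al.; you restate the same fact as a pointwise comparison of Dirichlet forms) and then apply the Diaconis--Saloff-Coste bound together with the uniform gap from Theorem~\ref{th:gap}(iii). That part is fine.

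For the lower bound, your indicator test function $f_0=p^{-|\Lambda|}\1_A$ does give a clean linear lower bound on $C_\alpha(\Lambda)$ for $\alpha\in(1,2]$, and is arguably more elementary than the paper's construction on that sub-range. But for $\alpha\in(0,1]$ (which includes the physically central $\alpha=1$, the modified log-Sobolev inequality) your proposal has a genuine gap, and the suggested repair does not appear to work. The trouble is quantitative, not just formal: with $f_\epsilon=p^{-L}\1_A+\epsilon$ (write $u=p^{-L}$, $L=|\Lambda|$), the only surviving Dirichlet contribution is at the boundary site, of size $\frac{q}{2}p^{L}\bigl[(u+\epsilon)^{1/\alpha}-\epsilon^{1/\alpha}\bigr]\bigl[(u+\epsilon)^{1/\alpha'}-\epsilon^{1/\alpha'}\bigr]$ plus a symmetric term, and since $1/\alpha>1$ while $1/\alpha'<0$ one checks that there is no choice of $\epsilon=\epsilon(L)$ for which $\alpha\alpha'\,\cD_\Lambda(f_\epsilon^{1/\alpha},f_\epsilon^{1/\alpha'})$ stays bounded while $\mathrm{Ent}_\Lambda(f_\epsilon)$ retains order $L$: if $\epsilon$ is small enough to keep the entropy linear, the cross term $\epsilon^{1/\alpha'}$ blows the Dirichlet form up faster than linearly; if $\epsilon$ is comparable to or larger than $u$, the entropy collapses to $O(1)$. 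The sharp step of the indicator function between $A$ and $A^c$ is exactly what is incompatible with a negative exponent.

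The paper sidesteps this by taking a test function that is a function $g(\xi)$ of the position $\xi$ of the leftmost zero, with $g(k)=\lambda^{k-1}$, $\lambda\uparrow 1/p$. This $f$ is everywhere strictly positive and decays geometrically rather than jumping, the East constraint collapses the Dirichlet form to that of an explicit birth-and-death chain on $\{1,\dots,L\}$, and one can compute that $m(g\log(g/m(g)))\asymp L^2$ while $\cD_\Lambda(g(\xi)^{1/\alpha},g(\xi)^{1/\alpha'})\asymp L$ uniformly in $\alpha\in(0,2]$, giving the linear lower bound on all of $(0,2]$ at once. Your ``nested hierarchy of bottlenecks'' intuition is pointing in the right direction --- a one-parameter family of thresholds rather than a single indicator --- but it needs to be implemented as a single smoothly varying function of a scalar coordinate (the leftmost-zero position), not as a limit of sharp indicators. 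As written, the proof covers only $\alpha\in(1,2]$.
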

The proof of Theorem \ref{th:ls} can be found in Section \ref{sec:ls}.
As a conclusion it is natural to ask whether one can find the precise
asymptotic behavior, as $q\downarrow 0$, of the log-Sobolev constant, and more generally of any $\alpha$-log-Sobolev inequality.

\subsection{Out of equilibrium I: long time behavior}

In this section we address the following questions: does the law of
the process at time $t$ converge to the reversible measure $\pi$ as
$t\to \infty$ if it starts from some non-equilibrium measure $Q\neq
\pi$? And if it converges, how fast?

As already explained in the previous section, one usually answers such questions studying the
log-Sobolev constant and using the so-called hypercontractivity
property of the semigroup $e^{t\cL}$.
Unfortunately, the log-Sobolev constant of a segment of size $L$ compares to $L$ as stated in Theorem \ref{th:ls}
(and in particular is not uniformly bounded in the size of the system) so that the (now) usual Holley-Stroock strategy does not apply.

Taking advantage of the oriented character of the East process, one can anyway prove the following result.

\begin{Theorem}[\cite{CMST}] \label{th:outI}
Fix $\a \in (0,1) \setminus \{p\}$ and assume that the initial
distribution $Q$ is a  Bernoulli($\a$) measure. Then for any local
function $f$,
  \begin{equation*}
    \int dQ(\sigma) |\bbE_\sigma(f(\s_t))-\pi(f)|\le C_f e^{-mt}
  \end{equation*}
  where $C_f= \|f\|_\infty (p \wedge q)^{-|\mathrm{supp}(f)|}/|q-\alpha|$
  (with $|\mathrm{supp}(f)|$   the cardinality of the support of $f$), and $m = \frac{1}{2} \gap (\cL) \min(1,\frac{\log (1/\alpha)}{\log(\alpha /p\wedge q)})$.
\end{Theorem}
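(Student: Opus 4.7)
The plan is to exploit the oriented nature of the East constraint through a coupling argument, since the hypercontractivity approach is blocked by Theorem \ref{th:ls}. Fix $N=\max\mathrm{supp}(f)$ and a parameter $k\in\bbN$ to be tuned later. Under $Q$, with probability geometric in $k$ the initial configuration $\sigma$ has no zero in $(N,N+k]$; on this ``bad'' event we retain the trivial estimate $|\bbE_\sigma f(\sigma_t)-\pi(f)|\le 2\|f\|_\infty$. On the complementary event let $\xi$ be the leftmost zero of $\sigma$ in $(N,N+k]$, and introduce the auxiliary process $\sigma^{*}_t$ defined as the East process on $(-\infty,\xi-1]$ with frozen zero boundary at $\xi$, started from $\sigma|_{(-\infty,\xi-1]}$ and built on the same graphical construction (Section \ref{graphical}) as $\sigma_t$.

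Next, decompose $|\bbE_\sigma f(\sigma_t)-\pi(f)|$ into a coupling error $|\bbE_\sigma f(\sigma_t)-\bbE_\sigma f(\sigma^{*}_t)|$ plus a stationarity error $|\bbE_\sigma f(\sigma^{*}_t)-\pi(f)|$. For the coupling error, observe that $\sigma^{*}(\xi)\equiv 0$ while $\sigma_\cdot(\xi)$ may flip, and that the update rule at any site $x$ depends only on the value at $x+1$. Hence any disagreement between $\sigma_t$ and $\sigma^{*}_t$ on $\mathrm{supp}(f)$ at time $t$ must originate at $\xi$ and propagate leftwards through a strictly time-increasing chain of legal rings at the sites $\xi,\xi-1,\dots,N$ in $[0,t]$, whose probability is bounded by $\bbP(\mathrm{Gamma}(\xi-N,1)\le t)\le(et/(\xi-N))^{\xi-N}$ via the standard Chernoff bound.

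For the stationarity error the key input is Theorem \ref{th:gap}(iii), which guarantees that $\sigma^{*}_t$ has spectral gap at least $\gap(\cL)$ with stationary measure $\pi$ restricted to $(-\infty,\xi-1]$. Because the East constraint spreads information only to the right, $\bbE_\sigma f(\sigma^{*}_t)$ depends on $\sigma$ only through the finite window $W=[\min\mathrm{supp}(f),\xi-1]$; a Cauchy--Schwarz estimate against the density $dQ/d\pi_W$ (legitimate precisely because $W$ is finite) then converts the $L^2(\pi_W)$ spectral-gap decay into a $Q$-averaged bound of the form $\|dQ/d\pi_W\|_\infty^{1/2}\cdot e^{-\gap(\cL)t}\|f\|_\infty$, where $\|dQ/d\pi_W\|_\infty$ factorises into the piece $(p\wedge q)^{-|\mathrm{supp}(f)|}$ coming from the Bernoulli densities on $\mathrm{supp}(f)$ and the piece $p^{-(\xi-N-1)}$ coming from the forced 1's in $(N,\xi)$ under the conditional measure $Q(\,\cdot\,|\xi)$.

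Summing these estimates against $Q(\xi=N+j)$ for $j=1,\dots,k$, adding the bad-event contribution $2\|f\|_\infty\alpha^k$, and optimising $k$ proportional to $t$ balances the geometric tail $\alpha^k$ against the density-times-gap factor $p^{-k/2}e^{-\gap(\cL)t}$: the $\tfrac12$ in the rate $m$ comes from the square root in Cauchy--Schwarz and the ratio $\log(1/\alpha)/\log(\alpha/(p\wedge q))$ from the exponent balancing, while the geometric sum over $j$ produces the $|q-\alpha|^{-1}$ prefactor in $C_f$, and $(p\wedge q)^{-|\mathrm{supp}(f)|}$ is the residual density factor on $\mathrm{supp}(f)$. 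The main technical hurdle I anticipate is the ordered-rings estimate for the coupling error together with the careful identification of $W$: the one-sidedness of the East constraint is essential, since for a bidirectional constraint $W$ would spread on both sides of $\mathrm{supp}(f)$ and the density factor would become uncontrollable.
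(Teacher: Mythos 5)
Your decomposition into a ``coupling error'' plus a ``stationarity error'' does not work: the coupling error is not small, and the bound you propose for it is vacuous in exactly the regime that the rest of your argument forces. You freeze the leftmost zero $\xi$ of the initial configuration in $(N,N+k]$; under $Q$ this zero is typically at distance $O(1)$ from $N$, and in any case your own optimisation requires $\xi-N\le k$ with $k$ proportional to $\delta t$ for a \emph{small} $\delta$ (otherwise the density factor $p^{-(\xi-N)}$ in the stationarity error overwhelms $e^{-\gap(\cL)t}$, since $\gap(\cL)$ can be very small). But the ordered-rings estimate $\bbP(\mathrm{Gamma}(\xi-N,1)\le t)\le (et/(\xi-N))^{\xi-N}$ is $\ge 1$ whenever $\xi-N\le et$, so it gives nothing when $\xi-N$ is of order $\delta t$ or smaller. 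This is not an artefact of the bound: in the true dynamics the zero at $\xi$ is killed after an $O(1)$ time (for fixed $q$), and the resulting discrepancy with the frozen-zero process propagates leftward and reaches $\mathrm{supp}(f)$ in a time of order $\xi-N\ll t$, so $|\bbE_\sigma f(\sigma_t)-\bbE_\sigma f(\sigma^{*}_t)|$ is genuinely of order one. No choice of $\xi$ can make both errors small simultaneously: the coupling error needs $\xi-N\gg t$, while the density factor needs $\xi-N\ll \gap(\cL)\,t/\log(1/(p\wedge q))$.

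This is precisely the obstruction that the distinguished-zero technique of Aldous and Diaconis (Definition \ref{def:distinguished}, Lemma \ref{lem:distinguished}) removes, and it is what the paper's proof uses. Instead of freezing the first zero $b$ to the right of $\mathrm{supp}(f)$, one lets it move: the distinguished zero jumps rightward at legal rings, and Lemma \ref{lem:distinguished} shows that if the initial law on $[a,b)$ is $\pi_{[a,b)}$ then, \emph{conditionally on the whole trajectory} $\{\xi_s\}_{s\le t}$, the law of $\sigma_t$ on $[a,\xi_t)$ is exactly $\pi_{[a,\xi_t)}$, so the conditional expectation of $f(\sigma_t)$ has $\pi_{[a,b)}$-mean zero. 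One then changes measure from $\delta_\sigma$ to $\pi_{[a,b)}$ (this is where $(p\wedge q)^{-(b-a)}$ enters), applies Cauchy--Schwarz, and controls the resulting conditional variance by an iterated Poincar\'e inequality over the inter-jump intervals of the distinguished zero (using Proposition \ref{prop:monotony}), which yields the factor $e^{-2\gap(\cL)t}$ with no coupling error at all. Your final bookkeeping (splitting on whether $b$ is within distance $\delta t$ of the support, the geometric sum producing $|q-\alpha|^{-1}$, the balance giving the $\tfrac12$ and the ratio of logarithms in $m$) does match the paper's, but the central estimate it is meant to rest on is missing.
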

The above result, proved in Section \ref{sec:outI},  shows that
relaxation to equilibrium is indeed taking place at an exponential
rate on a time scale $T_{\rm relax}=\gap(\cL)^{-1}$ which, for small
values of $q$, is very large and of the order of $\nep{c
\log(1/q)^2}$ with $c=(2\log 2)^{-1}$.
\begin{remark}
Although the above result is quite natural it should be noted that
it cannot hold for \emph{any} initial law $Q$ (as one could naively
expect). Consider for example starting the East dynamics from i.i.d
on the negative part of $\bbZ$ and from identically equal to one on
the positive part. Then clearly the positive part of $\bbZ$ will
never relax to equilibrium just because there are no vacancies
around! We refer to \cite{CMST} for a complete classification of the
allowed initial distributions.

In higher dimensional models like the
North-East model the non-equilibrium dynamics should exhibit an even
richer structure because of the possible presence of a critical density above
which the spectral gap becomes zero, infinite blocked configurations
appear etc. We refer the interested reader to the introduction of
\cite{CMST} for a quick review.

Finally we observe that the oriented character of the East Model is essential in the proof of Theorem
\ref{th:outI}. However the asymptotic convergence to the reversible measure should hold in the ergodic regime for more general KCSM. A particular step in this direction can be found in \cite{blondel}.

\end{remark}

\subsection{Out of equilibrium II: plateau behavior, aging and scaling limits} \label{sec:outIIintro}

In this section, we give a set of results (Theorem \ref{plateau} and
\ref{asymptotics}) which details the non-equilibrium behavior of the
East process for small values of $q$ (small temperature) and, in
contrast with  the previous section, for time scales much smaller
that $T_{\rm relax}=\gap(\cL)^{-1}$. The proofs of both theorems are
quite involved. Hence they will not be given in full detail. We
mention that one of the main ingredient is an approximation of the
East model by means of a suitable hierarchical coalescence process
introduced in the physics literature \cite{SE1} and rigorously
studied in \cite{FMRT} (see also \cite{FRT} for extensions). The
definition of this coalescence process and the approximation result
will not be given here but can be found in \cite{FMRT-cmp}.

\begin{definition}
\label{stalling-active}
Given $\epsilon,\, q\in (0,1)$,   we set
\begin{eqnarray}
& t_0:=1; ~~~~~t_0^-:=0; ~~~~~t_0^+=\left(\frac{1}{q}\right)^{\epsilon}\nonumber\\
& t_n:=\left(\frac{1}{q}\right)^n;~~~~~t_n ^-:=t_n ^{1-\e};~~~~~t_n
^+=t_n ^{1+\e}\,\,\,\,\,\,\forall n\geq 1\,.\label{deftn}
\end{eqnarray}
The time interval $[t_n^-, t_n^+]$ and $[t_n^+,t_{n+1}^-]$ will be
called respectively the {\sl $n^{th}$-active period} and the {\sl $n^{th}$-stalling
period}.
\end{definition}

In the next theorem we deal with the persistence, the vacancy
density and the two-time autocorrelations  during stalling periods
and prove plateau and aging behavior.

Given a configuration $\sigma$ we denote by $\{x_k\}=\{x_k(\sigma)\}
$ the position of the empty sites of $\sigma$, with the rules that
$x_0 \leq 0 < x_1$ and $x_k < x_{k+1}$ for all integers $k$. Then,
given a probability measure $\mu$ on $\bbN$, we write $Q={\rm
Ren}(\mu\tc 0)$ if, under $Q$, the first zero $x_0$ is located at
the origin $0$, the random variables $\{x_k - x_{k-1}\}_{k =
1}^\infty$ form a sequence of i.i.d.\ random variables with common
law $\mu$, and there is no other empty site on the left of the
origin.

\begin{Theorem}[Persistence, vacancy density and two-time autocorrelations]\label{plateau}
Fix a pro\-bability measure
$\mu$ on $\bbN$, $d := \inf \{ a :
\mu(a)>0\}$ and let $n_d$ be the smallest integer $n$ such that 
$d \in [2^{n-1}+1,2^n]$. Assume that the initial distribution $Q$ is a renewal
measure $Q={\rm Ren}(\mu\tc 0)$
and either one of the following holds:
\begin{enumerate}[a)]
\item the measure $\mu$ has finite mean;
\item the measure $\mu$ belongs to the domain of attraction of a $\a$-stable law
  or, more generally, $\mu((x,+\infty))=x^{-\a}L(x)$ where $L(x)$ is a
  slowly varying function at $+\infty$, $\a\in [0,1]$\footnote{A function $L$ is said to
be slowly varying at infinity, if, for all $c>0$, $\lim\limits_{x \to
\infty} L(cx)/L(x)=1$.}.
\end{enumerate}
Then, if $o(1)$ denotes an error term depending only
on $n$ and tending to
zero as both tend to infinity, for any $n\geq n_d$
\begin{enumerate}[(i)]
\item \begin{align}
& \lim_{q\downarrow 0}\, \sup _{t \in [t_{n}^+, t_{n+1}^-]}
\left|\bbP_Q(\sigma_t(0)=0)-\left(\frac{1}{2^{n}+1}\right)^{c_0(1+o(1)) }\right|=0\,,\\
& \lim _{q\downarrow 0}\, \sup _{t \in [t_{n}^+,
t_{n+1}^-]}\left|\bbP_Q(\sigma_s(0)=0~~~\forall s\leq
t)-\left(\frac{1}{2^{n}+1}\right)^{c_0(1+o(1)) }\right|=0\,,
\end{align}
where $c_0=1$ in case (a) and $c_0=\a$ in case (b).
\item Let $t,s:[0,1/2]\to [0,\infty)$ with $t(q)\geq s(q)$ for all $q\in [0,1/2]$. Then
$$\varlimsup_{q\downarrow 0}\,  \bbP_Q(\sigma_{t(q)}(0)=0)\leq \varlimsup _{q\downarrow 0}\,
 \bbP_Q(\sigma_{s(q)}(0)=0).$$
 The same bound holds with $\varliminf_{q\downarrow 0}$ instead
 of $\varlimsup_{q\downarrow 0}$.
\item For $x\in \bbZ_+$ let $C_Q(s,t,x)=\text{\rm Cov}_Q(\sigma_t(x);\sigma_s(x))$ be the two-time autocorrelation function. Then, for any $n,m \geq n_d$
$$
\lim _{q\downarrow 0}\, \sup _{\stackrel{t \in [t_{n}^+, t_{n+1}^-]}{s \in [t_{m}^+, t_{m+1}^-]}}
\left|C_Q(s,t,x)
  -\rho_x\left(\frac{1}{2^{n}+1}\right)^{c_0(1+o(1))}\left(1- \rho_x \left(\frac{1}{2^{m}+1}\right)^{c_0(1+o(1))}
    \right)\right|=0$$
where $\rho_x=Q(\s(x)=0)$.
\end{enumerate}
\end{Theorem}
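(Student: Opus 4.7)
The plan is to deduce all three items from two ingredients: (A) the hierarchical coalescence approximation of the low-temperature East process established in \cite{FMRT-cmp}, and (B) the universality theorem for the hierarchical coalescence process (HCP) proved in \cite{FMRT}. Heuristically, during the $n$-th active period $[t_n^-,t_n^+]$ the vacancy pattern of the East model performs one step of a coalescence: two consecutive vacancies whose gap $d$ lies in $(2^{n-1},2^n]$ get merged (the left one being killed, since only its East-neighbour in $(a,b]$ can be emptied via $b$ and trigger its flip to $1$), whereas gaps exceeding $2^n$ remain essentially frozen on this scale. Consequently, throughout the stalling window $[t_n^+,t_{n+1}^-]$ the law of $\s_t$ restricted to $\bbZ_+$ is $o(1)$-close, as $q\downarrow 0$, in total variation to the law $\nu_n$ of the HCP started from $\mathrm{Ren}(\mu\tc 0)$ after $n$ coalescence steps; the restriction to $\bbZ_+$ is legitimate because the graphical construction forces a fixed empty site at $0$ to decouple the evolutions on $\bbN$ and on $\bbZ_-$. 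Crucially, \cite{FMRT-cmp} provides this coupling not only for single-time marginals but also for the full trajectory, which is what is needed for the persistence event.

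Granted this coupling, item (i) reduces to the HCP survival probability of the vacancy at the origin, which by the universality theorem of \cite{FMRT} equals $(1/(2^n+1))^{c_0(1+o(1))}$ with $c_0=1$ in case (a) and $c_0=\alpha$ in case (b), the slowly varying factor of $\mu$ being absorbed into the $o(1)$ in the exponent. Since in the HCP vacancies never reappear, this probability also equals the persistence of the origin-vacancy through the first $n$ coalescence levels, hence $\bbP_Q(\s_r(0)=0\;\forall r\leq t)$ up to the same $o(1)$ error; both statements of (i) follow. For (iii) we apply the coupling at both times $s\in[t_m^+,t_{m+1}^-]$ and $t\in[t_n^+,t_{n+1}^-]$ with $m\leq n$. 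Monotonicity of the HCP (the inclusion $\{\s_t(x)=0\}\subseteq\{\s_s(x)=0\}$) yields $\bbP(\s_t(x)=0,\s_s(x)=0)=\bbP(\s_t(x)=0)$, so
\begin{equation*}
C_Q(s,t,x) = \bbP_Q(\s_t(x)=0)\bigl[1 - \bbP_Q(\s_s(x)=0)\bigr] + o(1);
\end{equation*}
the prefactor $\rho_x=Q(\s(x)=0)$ factorises out because, conditionally on $\{\s_0(x)=0\}$, the gap between $x$ and its right-neighbour vacancy is distributed as $\mu$ by the renewal property of $Q$, so the HCP survival factor at level $n$ from $x$ coincides with that from the origin.

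For item (ii) we combine two facts. First, the persistence function $t\mapsto \bbP_Q(\s_r(0)=0\;\forall r\leq t)$ is trivially monotone non-increasing in $t$ for every $q$. Second, the HCP approximation shows $\bbP_Q(\s_{t(q)}(0)=0)=\bbP_Q(\s_r(0)=0\;\forall r\leq t(q))+o(1)$ as $q\downarrow 0$, uniformly for $t(q)$ in any stalling interval; the remaining cases (when $t(q)$ or $s(q)$ falls in an active window, in the initial interval $[0,t_{n_d}^-]$, or beyond $T_{\rm relax}(q)$) are handled by monotonicity of the persistence together with the fact that the equilibrium vacancy density $q$ vanishes. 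Taking $\varlimsup_{q\downarrow 0}$ (respectively $\varliminf_{q\downarrow 0}$) in the trivial bound between the persistences at times $s(q)\leq t(q)$ and using the asymptotic identification of vacancy density with persistence yields the claim.

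The principal difficulty of the proof lies entirely in the coupling invoked in the first paragraph: a quantitative total-variation bound between the law of $\s_t$, for $t\in[t_n^+,t_{n+1}^-]$, and $\nu_n$, uniform in $t$ with error $o(1)$ as $q\downarrow 0$. This is the technical core of \cite{FMRT-cmp} and is established via a multi-scale construction in which, at each scale $k\leq n$, blocks of length $\sim 2^k$ are ``opened'' by a neighbouring empty site with probability exhibiting exactly the energy-entropy balance one reads off from the spectral gap asymptotics of Theorem \ref{th:gap}(ii). The present theorem adds little to that analysis beyond combining the coupling with the HCP universality of \cite{FMRT}: the renewal assumption on $Q$ and the tail exponent $\alpha$ enter only through identifying the universality class of the initial gap law $\mu$, and the rest is computation within the HCP.
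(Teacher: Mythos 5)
Your high-level architecture is correct and does match the way the proof actually goes: one approximates the low-temperature East dynamics in stalling windows by the hierarchical coalescence process, and then imports the universality computation from \cite{FMRT}. That is indeed the content of \cite{FMRT-cmp}, and the paper's own proof is essentially a citation to that paper. However, you miss the one piece of genuinely new content that the paper supplies, and your closing claim that ``the present theorem adds little to that analysis'' is wrong. The version proved in \cite{FMRT-cmp} requires the technical hypothesis $\mu\left([k,\infty)\right)>0$ for every $k\in\bbN$; the theorem as stated here drops that hypothesis, replaces it by the restriction $n\geq n_d$ (a quantity your write-up never mentions), and the paper's entire proof of Theorem~\ref{plateau} consists of Lemma~\ref{chepalle}, which is the ingredient needed to carry out the finite-volume approximation without the old assumption.

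Concretely, the coalescence coupling of \cite{FMRT-cmp} compares the East process on $\bbZ_+$ with the process on a finite box $[0,L]$, and for that one needs infinitely many empty sites to the right that \emph{persist} up to the final time $t_N$; under the condition $\mu([k,\infty))>0$ these are supplied by arbitrarily large initial gaps, whose vacancies are essentially frozen on scale $n$. When the support of $\mu$ is bounded (say $\mu$ concentrated near its minimum $d$), that argument breaks down, and one must instead show dynamically that the process itself produces, with uniformly positive probability as $q\downarrow 0$, long chains of zeros that survive through the relevant stalling window. This is precisely what Lemma~\ref{chepalle} does (via the events $G^k_r$, $A$, $B$ and Claims~\ref{inferno1}--\ref{inferno2}, all resting on Lemma~\ref{zeri} and Lemma~\ref{rates}). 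Your remark that ``the graphical construction forces a fixed empty site at $0$ to decouple the evolutions on $\bbN$ and on $\bbZ_-$'' is correct but orthogonal: that is a consequence of the orientation alone, and does not address the needed boundary control on the right. Without an analogue of Lemma~\ref{chepalle} your proposal only proves the theorem under the old hypothesis, so there is a genuine gap.
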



The picture that emerges from points $(i)$ and $(ii)$ is depicted in Figure \ref{fig:plateau}.

\begin{figure}[ht]
\psfrag{T}{ $\!\!\!\! \frac{\log t}{|\log q|}$}
 \psfrag{0}{\footnotesize $\epsilon$}
 \psfrag{1}{\footnotesize $ 1-\epsilon$}
 \psfrag{2}{\footnotesize $1+\epsilon$}
 \psfrag{3}{\footnotesize$2-\epsilon$}
 \psfrag{4}{\footnotesize$2+\epsilon$}
 \psfrag{5}{\footnotesize$n+\epsilon$}
 \psfrag{6}{}
\psfrag{cn}{$c_n$}
\psfrag{p}{$\!\!\!\!\!\!\!\! \mathbb{P}_{\mathcal{Q}}(\sigma_t(0)=0)$}
 \includegraphics[width=.90\columnwidth]{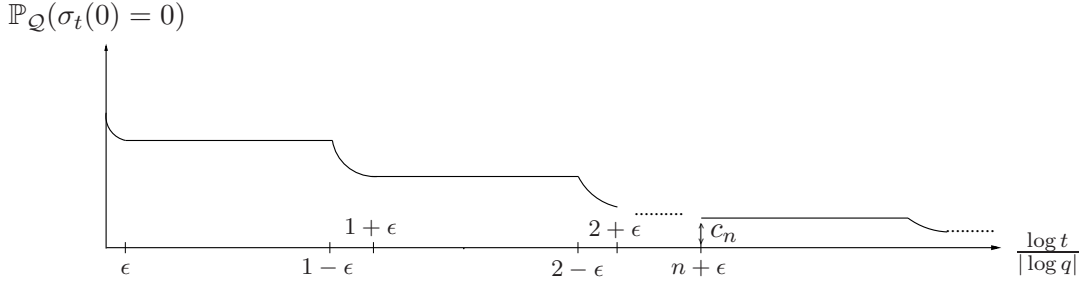}
\caption{Plateau behavior in the limit $q \to 0$ (with, for simplicity $d=1$), where  we set $c_n:=\left(1/(2^{n}+1)\right)^{c_0(1+o(1))}$ with $c_0$ defined in Theorem \ref{plateau} and $o(1)$ going to zero as $n\to\infty$.}
\label{fig:plateau}
 \end{figure}

\begin{remark} \label{anyk}
Theorem \ref{plateau} is stated in \cite{FMRT-cmp} in a less general form. Indeed, in \cite{FMRT-cmp},
$\mu$  must satisfy the following technical condition: for any $k\in \bbN$, $\mu\left([k,\infty)\right) >0$.
 In order to remove this assumption  we will need a new  technical
 fact described by Lemma \ref{chepalle} in  Section \ref{sec:outII}.
 Having such a lemma, it is simple to adapt the proof of
 \cite{FMRT-cmp}.  On the other hand, Theorem \ref{plateau} holds
 now only  starting from scale $n_d$ simply because, on any smaller scale, nothing interesting happens since the filled sites are essentially frozen.
 

Finally, we observe that, for small values of $q$, the two-time
autocorrelation function  $C_Q(s,t,x)$  depends in a non trivial way
on $s,t$ and not just on their difference $t-s$ (see Point $(iii)$).
This explains the word ``aging''. Clearly, for times much larger
than the relaxation time $\gap^{-1}(\cL)$, the time auto-correlation
will be very close to that of the equilibrium process which in turn,
by reversibility, depends only on $t-s$.
\end{remark}

The next theorem describes the statistics of the interval (domain)
between two consecutive zeros in a stalling period. In order to
state it let, for any $c_0\in (0,1]$, $\tilde X^{(\infty)}_{c_0}\ge
1$ be a random variable with Laplace transform given  by
\begin{equation}\label{macedonia}
\bbE(\nep{-s\tilde X^{(\infty)}_{c_0}}) =1- \exp\Big \{ - c_0 \int _1^\infty
\frac{e^{-sx}}{x} dx \Big\}= 1-\exp\Big \{ - c_0 \, \text{Ei}(s)
\Big\}\,.
\end{equation}
The corresponding probability density is of the form (see
\cite{FMRT}) $p_{c_0} (x){\mathds{1}}_{x \geq 1}$ where $p_{c_0}$ is the
continuous function on $[1, \infty)$ given by
\begin{equation}\label{vonnegut}
 p_{c_0}(x)= \sum _{k=1}^\infty \frac{(-1)^{k+1}c_0^k}{k!}\, \rho_k(x)
 \mathds{1}_{x\geq k }\,,
 \end{equation}
 where $ \rho _1(x)= 1/x$ and
 \begin{equation}\label{kurt}
   \rho_{k+1} (x)= \int _1^\infty d x_1
  \cdots \int_1 ^\infty dx _k \frac{1}{ x-\sum_{i=1}^{k} x_i } \prod
 _{j=1}^{k} \frac{1}{x_j} \,,\qquad  k\geq 1\,.
 \end{equation}
Let also $\tilde Y^{(\infty)}_{c_0}$ be
a non-negative random variable with Laplace transform given  by
\begin{equation}\label{macedonia2}
\bbE(\nep{-s \tilde Y^{(\infty)}_{c_0}}) :=1- \exp\Big \{ - c_0 \int _0^1
\frac{e^{-sx}}{x} dx \Big\}
\end{equation}

Starting from an initial law $Q={\rm Ren}(\mu\tc 0)$ denote by $x_0(t)=x_0(\sigma_t)$ the position of
the first zero at time $t$, and by $x_1(t)=x_1(\sigma_t)$ the position of the second zero.

\begin{Theorem} [Limiting behavior of the domain length and of the first zero]
\label{asymptotics}
Under the same assumptions of Theorem \ref{plateau}, let
$$
\bar X^{(n)}(t):=(x_{1}(t)-x_0(t))/(2^{n-1}+1)\quad ;\quad \bar
Y^{(n)}(t):=x_0(t)/(2^{n-1}+1).
$$
 Then, for any bounded function $f$,
 \begin{align}
&\lim _{n\uparrow \infty}
 \lim_{q\downarrow 0}  \sup_{t \in [t_n^+, t_{n+1}^- ]}
 \Bigl| \bbE_Q\bigl( f( \bar  X^{(n+1)}(t) ) \bigr)- E\bigl(f(\tilde
 X_{c_0}^{(\infty)}\bigr)\Bigr|=0  \label{spa}\\
&\lim _{n\uparrow \infty}\lim_{q\downarrow 0}  \sup_{t \in [t_n^+, t_{n+1}^-]}
 \Bigl| \bbE_Q\bigl( f( \bar  Y^{(n+1)}(t) ) \bigr)- E\bigl(f(\tilde
 Y_{c_0}^{(\infty)}\bigr)\Bigr|=0
\label{spaghetti1}
\end{align}
where again $c_0=1$ if $\mu$ has finite mean and $c_0=\a$ if  $\mu$
belongs to the domain of attraction of a $\a$-stable law.\\
The result \eqref{spa} holds for $f$ satisfying $|f(x)|\leq C
(1+|x|)^m$, $m=1,2,\dots$, if the $(m+\d)^{th}$-moment of $\mu$  is
finite for some $\d>0$.
\end{Theorem}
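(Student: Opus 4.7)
The plan is to reduce the out-of-equilibrium East dynamics in the stalling window $[t_n^+, t_{n+1}^-]$ to the hierarchical coalescence process (HCP) introduced in \cite{SE1} and analyzed rigorously in \cite{FMRT}. Concretely, the main approximation result established in \cite{FMRT-cmp} (the same one underlying Theorem \ref{plateau}) asserts that, as $q\downarrow 0$, the joint law under $\bbP_Q$ of the zeros of $\s_t$ for any $t\in[t_n^+,t_{n+1}^-]$ converges to the law of the renewal process obtained by running $n$ coalescence stages of the HCP on the initial renewal measure of inter-zero law $\mu$. Once this reduction is in place, both \eqref{spa} and \eqref{spaghetti1} become statements about the large-generation behavior of an explicit coalescence process, for which the universality theorems of \cite{FMRT} apply directly.

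For \eqref{spa}, after $n$ HCP stages the typical inter-zero gap is of order $2^n$, and \cite{FMRT} gives that the rescaled gap $(x_1-x_0)/(2^{n-1}+1)$ converges in distribution, as $n\to\infty$, to $\tilde X_{c_0}^{(\infty)}$ with density \eqref{vonnegut}; the dichotomy $c_0=1$ versus $c_0=\a$ comes from the universality classes identified in \cite{FMRT} (finite mean versus $\a$-stable tails). Combining this with the East/HCP approximation yields the claim for bounded continuous $f$. The uniformity in $t\in[t_n^+,t_{n+1}^-]$ is inherited directly from the approximation scheme of \cite{FMRT-cmp}, since the window $[t_n^+,t_{n+1}^-]$ is precisely the regime in which the $n$-th stage of coalescence has been fully completed but the $(n+1)$-th has not yet started.

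For \eqref{spaghetti1} the relevant functional is $x_0(t)/(2^{n-1}+1)$, which measures a residual/overshoot distance from the origin to the nearest zero rather than a full inter-zero gap. This accounts for the different Laplace transform in \eqref{macedonia2}: integration on $[0,1]$ instead of $[1,\infty)$ is the natural Dynkin--Lamperti-type modification that arises when one conditions a renewal configuration on the location of one of its zeros and looks at the induced distribution of the distance to the origin at the next coalescence generation. I would therefore again invoke the HCP approximation to identify $\bar Y^{(n+1)}(t)$ with the corresponding functional of the $n$-th generation HCP, and then apply the companion scaling result of \cite{FMRT} giving $\tilde Y_{c_0}^{(\infty)}$ as limit.

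The remaining point is the moment refinement of \eqref{spa} for $f$ with polynomial growth $|f(x)|\leq C(1+|x|)^m$. This amounts to uniform integrability of $\bar X^{(n+1)}(t)^m$, which I would get from an $(m+\d)$-th moment bound at generation $n$ of the HCP, propagated from the assumption on $\mu$ through the renewal structure preserved by the coalescence rule. Finally, the extension to general $\mu$ (in particular removing the condition $\mu([k,\infty))>0$ for all $k$ used in \cite{FMRT-cmp}) would be handled via Lemma \ref{chepalle} and a restart of the HCP analysis at the first generation $n_d$ in which coalescence can actually occur. The main obstacle I anticipate is not any of the probabilistic steps per se but controlling the HCP approximation error uniformly over the entire stalling window $[t_n^+,t_{n+1}^-]$; however, the $q^{\pm\e}$ separation of these times from the active windows $[t_n^-,t_n^+]$ and $[t_{n+1}^-,t_{n+1}^+]$ is exactly the feature exploited in the construction of \cite{FMRT-cmp}, so this uniformity is already built into the approximation.
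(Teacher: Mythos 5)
The paper does not actually prove Theorem \ref{asymptotics}: the remark immediately after it states ``The proof of the above theorem will not be given here and can be found in \cite{FMRT-cmp}.'' So there is no in-paper argument against which your proposal can be checked line by line.

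That said, your outline does match the proof strategy that the paper attributes to \cite{FMRT-cmp}: approximate the out-of-equilibrium East dynamics in each stalling window $[t_n^+,t_{n+1}^-]$ by $n$ stages of the hierarchical coalescence process, invoke the universality/scaling-limit theorems of \cite{FMRT} to identify the limits $\tilde X_{c_0}^{(\infty)}$ and $\tilde Y_{c_0}^{(\infty)}$ (with the dichotomy $c_0=1$ versus $c_0=\alpha$ coming from the finite-mean/$\alpha$-stable universality classes), handle the polynomial-growth extension by propagating an $(m+\delta)$-th moment bound to get uniform integrability, and use Lemma \ref{chepalle} to dispense with the support hypothesis $\mu([k,\infty))>0$ that \cite{FMRT-cmp} originally assumed. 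These are indeed the ingredients the paper signals in the introduction to Section 3.5 and in Remark \ref{anyk}.

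Two caveats worth noting. First, your proposal is entirely a plan: every step is of the form ``I would invoke \ldots'' and no actual control of the East/HCP approximation error, of the moment propagation, nor of the uniformity in $t$ over the full window is supplied, so it cannot be regarded as a proof even modulo the cited results. Second, you yourself only claim the conclusion ``for bounded \emph{continuous} $f$,'' while the theorem is stated for \emph{any} bounded $f$; mere weak convergence of the rescaled discrete gap to the absolutely continuous law of $\tilde X_{c_0}^{(\infty)}$ does not give convergence of expectations for all bounded measurable $f$, and closing that gap requires the stronger form of convergence (e.g.\ of distribution functions/densities, or local limit control) that the HCP analysis in \cite{FMRT} provides. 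You should either quote that stronger result explicitly or restrict the class of test functions; as written the step from weak convergence to ``any bounded $f$'' is missing.
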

\begin{remark}
The above result holds for a wider class of initial measure $Q$.
Moreover, the moment condition can be relaxed, see \cite{FMRT-cmp}.
The proof of the above theorem will not be given here and can be
found in \cite{FMRT-cmp}. In general, the asymptotics of the first
$k$ zeros can be deduced from the results of \cite{FMRT-cmp}.
\end{remark}
We refer the reader to Remark \ref{rem:comb} in Section
\ref{sec:comb} for an heuristic interpretation of the scaling
$t_n=(1/q)^n$ and the renormalization length $2^n$.

\subsection{Large deviations of the activity}
Let us introduce some notation. For simplicity, in this section, we
set $\Lambda_N=[1,N]$,  $\sigma_t^N=\sigma_t^{\Lambda_N}$ and
$\pi_N:=\pi_{\Lambda_N}$ (recall the notation introduced two lines after \eqref{estendiamo}) 
and we denote by $\langle \rangle$ the mean
over the evolution of the process and over   the initial
configuration which is distributed with $\pi_N$. We also define the
total activity as
$$
\cA(t):= \sum_{x \in \Lambda_N} \cA_x(t)
$$
where $\cA_x(t):=\# \{s \leq t : \lim_{\e \downarrow 0} \sigma_{t-\e}^N(x) \neq \lim \sigma_t^N(x) \}$
is the random variable
that counts the number of configuration changes at site $x$ during the time interval $[0,t]$.

Let us explain why the total activity is a relevant quantity. The East model, as it is common for kinetically constrained models and more generally for glassy systems, is characterized by
a spatially heterogeneous dynamics, namely by the mixture of frozen and mobile areas (see for example section 1.5 in \cite{GarrahanSollichToninelli}). The occurrence of these heterogeneities has led to the idea that
the dynamics takes
place on a first-order coexistence line between active and inactive dynamical phases
 \cite{MGC,GJLPDW1,GJLPDW2,GarrahanSollichToninelli}. In order to exploit this idea the activity has been proposed as a relevant order parameter to discern active and inactive dynamics and a dynamical approach has been devised to define a suitable notion of free energy. In this dynamical approach the role of the free energy is played by the large deviation function of the activity which, as will be detailed below, undergoes a first order transition in the thermodynamic limit.

Since   $\cA_x(t) - \int_0^t c_x^{\Lambda_N}(\sigma_s^N) ds$ is a
martingale, it can be proved that $\cA(t)$ satisfies the following
law of large numbers
$$
\lim_{N \to \infty} \lim_{t \to \infty} \frac{\cA(t)}{Nt} =  2p(1-p)^2 .
$$
In the sequel we set $\bbA := 2p(1-p)^2$.
Thus  one could expect that the probability $\mathcal{P}(a)$ of observing a deviation from the mean value $\frac{\cA(t)}{N t} \simeq a$ scales as
\begin{equation}
\label{eq: LD intro}
\lim_{N\to\infty}\lim_{t\to\infty} \;  \frac{1}{Nt} \log \mathcal{P}(a)=
-f(a) \, ,
\end{equation}
with $0<f (a) <\infty$ for $a\neq \mathbb A$ as it occurs in absence of the kinetic constraint. However, as it has been observed in \cite{GJLPDW1,GJLPDW2}, due to the presence of the constraint  it is possible to realize at a low cost a trajectory with zero
activity by starting from a completely filled configuration and imposing that a single site does not change its state. Analogously one can obtain an  activity  smaller than the mean one by blocking for a fraction of time a  single site. As a consequence of this sub-extensive cost for lowering the activity it holds
$f(a)=0$ for $a<\mathbb A$.
For the same reason, the moment generating function controlling the fluctuation of the total activity
\begin{eqnarray}
\label{eq: 1st order intro}
\psi(\lambda) = \lim_{N \to \infty} \lim_{t \to \infty} \;
\frac{1}{N t} \log \left \langle \exp \big( \lambda \cA(t) \big) \right \rangle
\end{eqnarray}
is non analytic at $\lambda=0$ with a discontinuous first order derivative \cite{GJLPDW1,GJLPDW2}.

In \cite{bodineau-toninelli} the authors study the finite size scaling of this first order transition by analyzing this generating function
with a refined thermodynamic scaling, namely
$$
\varphi(\alpha) := \limsup_{N \to \infty} \lim_{t \to \infty} \frac{1}{t} \log \left\langle \left( \exp \{ \frac{\alpha}{N} \cA(t) \}\right)\right\rangle, \qquad \alpha \in \bbR
$$
which corresponds to a blow up of the region $\lambda=\alpha/N \sim 0$ and prove the following result

\begin{Theorem}[\cite{bodineau-toninelli}] \label{th:activity}
For any $p\in(0,1)$, there exist $\alpha_1 < \alpha_0<0$ and a constant $\Sigma>0$ such that
\begin{enumerate}[(i)]
\item for $\alpha > \alpha_0$, it holds $\varphi(\alpha)=\bbA \alpha$;
\item for $\alpha < \alpha_1$ it holds $\varphi(\alpha)=- \Sigma$.
\end{enumerate}
\end{Theorem}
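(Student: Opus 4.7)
The proof proceeds by reducing $\varphi(\alpha)$ to a spectral problem. Setting $\tilde\alpha := \alpha/N$ and introducing the tilted generator
\begin{equation*}
\cL_N^{\tilde\alpha} f(\sigma) := \sum_{x\in\Lambda_N} c_x^{\Lambda_N}(\sigma)\,r_x(\sigma)\bigl[e^{\tilde\alpha} f(\sigma^x) - f(\sigma)\bigr],\qquad r_x(\sigma):=(1-\sigma(x))p+\sigma(x)q,
\end{equation*}
reversibility of $\pi_N$ makes $\cL_N^{\tilde\alpha}$ self-adjoint in $L^2(\pi_N)$. A Feynman--Kac computation gives $\langle e^{\tilde\alpha\cA(t)}\rangle = \pi_N\bigl(e^{t\cL_N^{\tilde\alpha}} 1\bigr)$, so $\tfrac{1}{t}\log\langle e^{\tilde\alpha \cA(t)}\rangle \to \lambda_N(\tilde\alpha)$, the top eigenvalue of $\cL_N^{\tilde\alpha}$, and consequently $\varphi(\alpha) = \limsup_N \lambda_N(\alpha/N)$. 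Symmetrization via detailed balance yields the variational identity
\begin{equation*}
\lambda_N(\tilde\alpha) = \sup_{f\neq 0}\frac{(e^{\tilde\alpha}-1)\pi_N(R f^2)-e^{\tilde\alpha}\cD_{\Lambda_N}(f)}{\pi_N(f^2)},
\end{equation*}
where the total rate $R(\sigma) := \sum_x c_x^{\Lambda_N}(\sigma) r_x(\sigma)$ satisfies $\pi_N(R) = \bbA N + O(1)$.

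For Part (i), plugging $f\equiv 1$ into the variational formula gives the lower bound $\lambda_N(\alpha/N) \geq (e^{\alpha/N}-1)\pi_N(R) \to \bbA\alpha$. For the matching upper bound, the plan is to exploit the convexity of $\tilde\alpha\mapsto\lambda_N(\tilde\alpha)$ and Taylor-expand at $0$: for some $\xi$ between $0$ and $\alpha/N$, $\lambda_N(\alpha/N) = \alpha \bbA + \tfrac{\alpha^2}{2 N^2}\lambda_N''(\xi)$. The key estimate is $\lambda_N''(\xi) = O(N)$ uniformly for $\xi$ in a neighborhood of $0$. Identifying $\lambda_N''(\xi)$ with a variance of $R$ under a tilted stationary measure, the bound at $\xi=0$ reads $\lambda_N''(0) \leq 2\var_{\pi_N}(R)/\gap(\cL) = O(N)$, which uses the uniform positivity of $\gap(\cL)$ (Theorem \ref{th:gap}) and the locality of $R = \sum_x c_x^{\Lambda_N} r_x$; perturbing $\xi$ away from $0$ preserves the estimate as long as the spectral gap of $\cL_N^\xi$ stays bounded below, which defines the admissible interval for $\tilde\alpha$ and hence the threshold $\alpha_0$.

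For Part (ii) the lower bound $\varphi(\alpha)\geq -\Sigma$ is obtained from the frozen trial $f = \mathds{1}_{\{\mathbf{1}\}}$, the indicator of the all-ones configuration $\mathbf 1\in\Omega_{\Lambda_N}$. Since $c_x^{\Lambda_N}(\mathbf 1)=1-\sigma(x+1)$ vanishes for $x<N$ while $c_N^{\Lambda_N}(\mathbf 1)=1$ and $r_N(\mathbf 1)=q$, a direct computation yields $\langle f,\cL_N^{\tilde\alpha} f\rangle/\pi_N(f^2) = -q$ for every $\tilde\alpha$ and $N$, so $\Sigma \leq q$ is achievable. The matching upper bound $\varphi(\alpha)\leq -\Sigma$ for $\alpha<\alpha_1$ is the main obstacle. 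Decomposing an arbitrary normalized trial as $f = a\mathds{1}_{\{\mathbf 1\}}+g$ with $g(\mathbf 1)=0$, the frozen part contributes $-q a^2 p^N$ while the bulk part $g$ satisfies (via a concentration bound on $R$ using Theorem \ref{th:gap} together with the locality of $R$) $\pi_N(R g^2)\geq \bbA N\|g\|^2(1-o(1))$, so its contribution is at most $(e^{\tilde\alpha}-1)\bbA N\|g\|^2(1+o(1))\sim \alpha\bbA\|g\|^2$, which falls below $-q\|g\|^2$ as soon as $\alpha\bbA<-q$. The delicate step is to control the off-diagonal cross term $\langle\mathds{1}_{\{\mathbf 1\}},\cL_N^{\tilde\alpha} g\rangle$---of order $p^{N/2}\|g\|$---by Cauchy--Schwarz so that the level repulsion between the two branches does not spoil the bound; a careful bookkeeping then yields $\lambda_N(\alpha/N)\leq -\Sigma$ for some $\Sigma>0$ and all $\alpha<\alpha_1$. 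The gap $\alpha_1<\alpha_0$ reflects the sub-optimality of treating the two branches by independent perturbative arguments.
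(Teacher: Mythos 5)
The paper does not actually prove Theorem \ref{th:activity}: it only states it with a citation to \cite{bodineau-toninelli}, so your attempt must stand on its own. Your general framework is the right one (and is the one used in the cited work): the tilted generator, the Feynman--Kac identity $\langle e^{\tilde\alpha\cA(t)}\rangle=\pi_N(e^{t\cL_N^{\tilde\alpha}}1)$, the symmetrized variational formula, the computation $\pi_N(R)=\bbA N+O(1)$, the trial function $f\equiv 1$ for the lower bound in (i), and the observation that blocked configurations give an $O(1)$ lower bound in (ii) are all correct. The two upper bounds, however, each contain a genuine gap. For (i), the Taylor-expansion argument is circular: the bound $\lambda_N''(\xi)=O(N)$ away from $\xi=0$ requires both a uniform lower bound on the gap of $\cL_N^{\xi}$ above its top eigenvalue and the estimate $\var_{\mu_\xi}(R)=O(N)$ for the tilted ground-state measure $\mu_\xi$, and these are precisely the structural facts the theorem encodes (the gap closes exactly where the active and inactive branches cross, at $\tilde\alpha$ of order $1/N$). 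Worse, since the relevant interval $[0,\alpha/N]$ shrinks to $\{0\}$ for every fixed $\alpha$, if your claimed second-derivative bound held on any fixed neighborhood of $0$ it would yield $\varphi(\alpha)=\bbA\alpha$ for \emph{all} $\alpha$, contradicting (ii). Also note that for large $\alpha>0$ the naive bound only gives $\varphi(\alpha)\le \alpha\,\|R\|_\infty/N\approx p\alpha\gg\bbA\alpha$; closing this requires a quantitative concentration inequality bounding $\pi_N(Rf^2)-\pi_N(R)\pi_N(f^2)$ by the Dirichlet form via the locality of each $c_x r_x$, which is the substantive missing lemma.

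For (ii), the decomposition $f=a\mathds{1}_{\{\mathbf 1\}}+g$ fails because the ``inactive branch'' is not one-dimensional: there are exponentially many nearly blocked configurations (all ones except for $O(1)$ vacancies near the right boundary) on which $R=O(1)$; for instance if $g$ is the indicator of the configuration with a single zero at site $N$ then $\pi_N(Rg^2)=\pi_N(g^2)$, so your claimed bound $\pi_N(Rg^2)\ge\bbA N\|g\|^2(1-o(1))$ for $g\perp\mathds{1}_{\{\mathbf 1\}}$ is false. Relatedly, $\mathds{1}_{\{\mathbf 1\}}$ only yields $\varphi\ge -q$, whereas the theorem requires the matching value $-\Sigma$ with $\Sigma\le q$ obtained by optimizing the shape of the boundary layer between the frozen bulk and the active region near site $N$ (this is the variational problem alluded to after Corollary \ref{cor:activity}); proving $\varphi(\alpha)\le-\Sigma$ for $\alpha<\alpha_1$ then requires a localization argument showing that any trial function must essentially split into a bulk part paying $\alpha\bbA$ per unit length and an interface part paying at least $\Sigma$ --- none of which is supplied by the Cauchy--Schwarz control of a single cross term. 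So the skeleton is right, but both thresholds $\alpha_0,\alpha_1$ and the constant $\Sigma$ emerge from estimates that your proposal asserts rather than proves.
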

The  results of this theorem are illustrated in Figure \ref{fig:activity} below. In particular it shows that, in a range of $\lambda $ of order $1/N$, the transition is shifted from 0.
As a corollary, the authors give the following estimates on the large deviations for a reduced activity

\begin{Corollary}[\cite{bodineau-toninelli}]\label{cor:activity}
For any $u \in [0,1)$, it holds
$$
\begin{array}{rcccl}
-\Sigma(1-u) \!\!& \leq & {\displaystyle \lim_{\e \to 0} \liminf_{N \to \infty} \lim_{t \to \infty}}
 \frac{1}{t} \log \left\bra\left( \frac{\cA(t)}{Nt} \in [u\bbA -\e, u \bbA+\e]\right)\right\ket & & \\
&\leq  & {\displaystyle \lim_{\e \to 0} \limsup_{N \to \infty} \lim_{t \to \infty}}
 \frac{1}{t} \log \left\bra\left( \frac{\cA(t)}{Nt} \in [u\bbA -\e, u \bbA+\e]\right) \right\ket\!\! & \leq & \alpha_0 \bbA (1-u).
\end{array}
$$
\end{Corollary}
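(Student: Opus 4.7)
My plan is to deduce Corollary~\ref{cor:activity} from Theorem~\ref{th:activity} by standard large deviation techniques: exponential Chebyshev for the upper bound, and a two-scale decomposition of the time interval $[0,t]$ combined with the Markov property for the lower bound. The physical picture motivating the lower bound is that the cheapest way to realize an average activity $u\bbA$ with $u\in[0,1)$ is to freeze the system in the inactive dynamical phase during a fraction $(1-u)$ of the time window (at total cost $\approx e^{-\Sigma(1-u)t}$) and to let it evolve freely during the remaining fraction $u$ (with $O(1)$ cost, by the law of large numbers for $\cA$).

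\emph{Upper bound.} Fix $\alpha\in(\alpha_0,0)$. Since $\alpha<0$, exponential Chebyshev gives
\begin{equation*}
\bbP\!\left(\tfrac{\cA(t)}{Nt}\in[u\bbA-\e,u\bbA+\e]\right) \;\leq\; \bbP\!\left(\tfrac{\cA(t)}{Nt}\leq u\bbA+\e\right) \;\leq\; e^{-\alpha t(u\bbA+\e)}\,\bra e^{\alpha\cA(t)/N}\ket.
\end{equation*}
Taking $\tfrac{1}{t}\log$, sending $t\to\infty$ and then $\limsup_{N\to\infty}$, and invoking $\varphi(\alpha)=\bbA\alpha$ from Theorem~\ref{th:activity}(i), the right-hand side converges to $-\alpha(u\bbA+\e)+\bbA\alpha=\alpha[\bbA(1-u)-\e]$. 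Optimizing by letting $\alpha\downarrow\alpha_0$ and then $\e\downarrow 0$ produces the upper bound $\alpha_0\bbA(1-u)$.

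\emph{Lower bound.} Set $s:=(1-u)t$, fix $\delta>0$ small and $\e''>0$ so that $\delta(1-u)+u\e''\leq\e$. Using $\cA(t)=\cA(s)+\cA([s,t])$, one checks the event-inclusion
\begin{equation*}
\{\cA(s)\leq \delta Ns\}\cap\{\cA([s,t])/(N(t-s))\in[\bbA-\e'',\bbA+\e'']\}\;\subset\;\{\cA(t)/(Nt)\in[u\bbA-\e,u\bbA+\e]\}.
\end{equation*}
The Markov property at time $s$ then yields
\begin{equation*}
\bbP\!\left(\tfrac{\cA(t)}{Nt}\in[u\bbA\pm\e]\right)\;\geq\;\bbP(\cA(s)\leq \delta Ns)\cdot\inf_\sigma\bbP_\sigma\!\left(\tfrac{\cA(t-s)}{N(t-s)}\in[\bbA\pm\e'']\right).
\end{equation*}
The second factor converges to $1$ as $t-s\to\infty$, uniformly in $\sigma$, by the ergodic theorem applied to the additive functional $\cA$, which is available thanks to the uniform-in-$N$ spectral gap $\gap(\cL_\Lambda)\geq\gap(\cL)>0$ from Theorem~\ref{th:gap}(iii). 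For the first factor, for any $\alpha<0$ one has the trivial splitting
\begin{equation*}
\bra e^{\alpha\cA(s)/N}\ket\;\leq\;\bbP(\cA(s)\leq\delta Ns)+e^{\alpha\delta s}.
\end{equation*}
Choosing $\alpha<\alpha_1$ with $\alpha<-\Sigma/\delta$ makes $e^{\alpha\delta s}=o(e^{-\Sigma s})$, while Theorem~\ref{th:activity}(ii) supplies $\bra e^{\alpha\cA(s)/N}\ket\geq e^{(-\Sigma+o(1))s}$; rearranging gives $\bbP(\cA(s)\leq\delta Ns)\geq e^{-\Sigma s(1+o(1))}$, and the asserted lower bound $-\Sigma(1-u)$ follows upon taking $\tfrac{1}{t}\log$, $t\to\infty$, and $\liminf_N$.

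The main obstacle is to extract from Theorem~\ref{th:activity}(ii) the \emph{matching lower bound} $\bra e^{\alpha\cA(s)/N}\ket\geq e^{(-\Sigma+o(1))s}$ valid for \emph{all} large $N$, not merely along the subsequence achieving $\varphi(\alpha)=\limsup_N\varphi_N(\alpha)=-\Sigma$. This matching bound is obtained in \cite{bodineau-toninelli} by an explicit construction of a family of low-activity trajectories, essentially holding a macroscopic portion of the lattice in the blocked configuration $\equiv 1$ over the window $[0,s]$, whose cost scales as $\Sigma$ per unit time independently of the volume. Once this is combined with the uniform ergodicity from Theorem~\ref{th:gap}(iii) in the second factor, the two-scale argument closes and the corollary follows.
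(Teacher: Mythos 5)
First, note that the survey itself offers no proof of Corollary~\ref{cor:activity}: it is simply quoted from \cite{bodineau-toninelli}. So your proposal has to be judged on its own. Your upper bound is complete and correct: the exponential Chebyshev inequality with $\alpha\in(\alpha_0,0)$, combined with $\varphi(\alpha)=\bbA\alpha$ from Theorem~\ref{th:activity}(i), gives $\limsup_N\lim_t\frac1t\log\bbP(\cdot)\leq\alpha[\bbA(1-u)-\e]$ for every such $\alpha$, and optimizing $\alpha\downarrow\alpha_0$, $\e\downarrow0$ yields $\alpha_0\bbA(1-u)$. The two-scale decomposition for the lower bound (freeze on $[0,(1-u)t]$, run typically on $[(1-u)t,t]$) is also the right mechanism, and the event inclusion and Markov-property factorization are correctly set up. Two small remarks: the appeal to the uniform spectral gap of Theorem~\ref{th:gap}(iii) for the second factor is unnecessary, since $t\to\infty$ is taken at fixed $N$ where the chain is a finite irreducible Markov chain; and the ergodic average of $\cA$ converges to $\bbA+O(1/N)$ rather than $\bbA$ (the unconstrained boundary site flips at rate $2pq$, not $2pq^2$), which is harmless only because $N\to\infty$ is taken afterwards and $\e''$ is fixed.

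The genuine weak point is the one you flag yourself: the step $\langle e^{\alpha\cA(s)/N}\rangle\geq e^{(-\Sigma+o(1))s}$ \emph{for all large $N$} does not follow from Theorem~\ref{th:activity}(ii), because $\varphi$ is defined there with a $\limsup_N$, which only guarantees this lower bound along a subsequence of volumes; the corollary's left inequality, however, involves $\liminf_N$. Nor can the gap be closed by the naive trajectory construction (filled configuration with the boundary site held fixed), which yields cost $q$ per unit time rather than the surface tension $\Sigma$ coming from the variational problem; these need not coincide. So, as a derivation from the \emph{statement} of Theorem~\ref{th:activity}, the lower bound is incomplete, and the missing ingredient is precisely the volume-uniform lower bound on the cost of an inactive window (equivalently $\liminf_N\varphi_N(\alpha)\geq-\Sigma$ for $\alpha<\alpha_1$), which must be imported from the constructive part of \cite{bodineau-toninelli}. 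Since the corollary is itself a quoted result, deferring to that reference is defensible, but you should state explicitly that this is an input beyond Theorem~\ref{th:activity} rather than a consequence of it.
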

This scaling is  anomalous compared to the extensive scaling in $N$ of the unconstrained model and it is a direct consequence of the  sub-extensive cost for lowering the activity (while the large deviations for increasing the activity above $\bbA$ remain extensive in $N$). As detailed in Section 2.2 of \cite{bodineau-toninelli},
by analogy with equilibrium phase transitions, one can interpret $\Sigma$ as a surface tension between the inactive and the active region (per unit of time)
and this quantity is obtained from a variational problem (see Section 6 of \cite{bodineau-toninelli}).
\begin{figure}[ht]
\psfrag{psi}{ $\! \psi(\lambda)$}
 \psfrag{a0}{$\alpha_0$}
 \psfrag{a1}{$\alpha_1$}
 \psfrag{l}{ $\lambda$}
 \psfrag{phi}{$\varphi(\alpha)$}
 \psfrag{a}{$\alpha$}
 \psfrag{e}{$-\Sigma$}
 \includegraphics[width=.70\columnwidth]{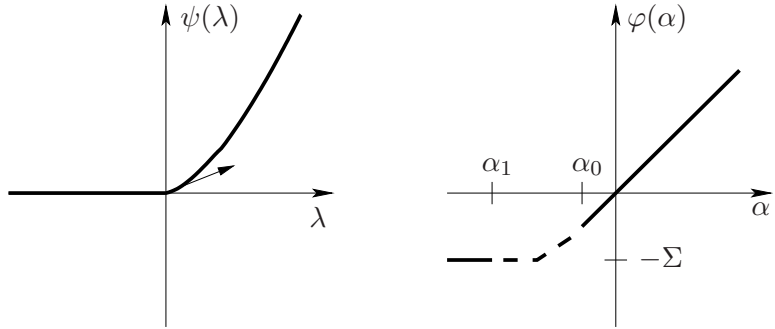}
\caption{The functions $\psi$ and $\varphi$. The results of Theorem \ref{th:activity} are depicted on the right, in thick line. The dashed lines correspond to a conjectured behavior.}
\label{fig:activity}
\end{figure}

The results in \cite{bodineau-toninelli}  do not provide  the entire  phase diagram for the generating function $\varphi(\alpha)$ (only for $\alpha \not \in [\alpha_1,\alpha_0]$).
 A first open problem is to prove the conjecture of \cite{bodineau-toninelli} that there is a unique critical value $\alpha_c$ and that the two regimes remain valid up to $\alpha_c$ (as depicted in the dashed line of figure \ref{fig:activity}), namely $\varphi=-\Sigma$ for $\alpha\leq \alpha_c$ and $\varphi=\alpha\bbA$ for $\alpha\geq\alpha_c$, which would imply by continuity
$
\alpha_c =-\Sigma/\bbA.$
This conjecture is supported by numerical simulations \cite{BLT}.
If this  conjecture is verified, then Corollary \ref{cor:activity}
 can be improved and the large deviations for reducing the activity
would be given by
$$
\forall u \in [0,1], \qquad
\lim_{N\to\infty} \lim_{t\to\infty}\frac{1}{t}\log \left \bra \frac{\cA(t)}{Nt}\simeq u \bbA \right \ket  =-\Sigma(1-u) \, .
$$
Another interesting open problem is to describe the interface between the active and inactive regions in the inactive regime  $\alpha<\alpha_c$, in particular to prove that this interface is localized close to the boundary. Indeed in \cite{BLT} the interface has been conjectured to fluctuate within a region of the order of $N^{1/3}$, a result which is supported by numerical results and by the study of an effective model where the boundary between active and inactive
regions is described by a Brownian interface. Finally, a very interesting issue from the physicists point of view is to understand if and how the dynamical phase transition can lead to  quantitative predictions on the model at $\lambda = 0$.

\section{Combinatorics} \label{sec:comb}

In this section we collect, and partially prove, some useful
combinatorial results, which are  intimately related to the oriented
character of the East process.

Let us fix some notation. Consider the East process on $\Lambda=(-\infty,-1]$ starting from the completely filled configuration ({\it i.e.}\
 the configuration $\sigma_\mathds{1}$ satisfying $\sigma_\mathds{1}(x)=1$ for any $x \in (-\infty,-1]$) with a frozen vacancy at the origin $0$. Denote by
$V(n)$ the set of all configurations that the process can reach (with positive probability) under the condition that, at any given time, no more than $n$ zeros are present on $(-\infty,-1]$.
Define absolute value of the position of the leftmost zero in all the configurations of $V(n)$, namely
$$
\ell(n):=\sup\left\{-y: y \in (-\infty,-1], \exists \eta\in V(n)\, {\mbox{s.t.}}\,\eta(y)=0\right\}
$$
with the convention that $\sup\emptyset=0$.

\begin{Proposition}[\cite{CDG,SE1}] \label{prop:comb}
The following holds for $n \geq 0$:\\
$(i)$ $\ell(n)=2^{n}-1$;\\
$(ii)$ $|V(n)| \leq 2^{\genfrac{(}{)}{0pt}{}{n}{2}} n!c^n$ with $c \leq 0,7$.
\end{Proposition}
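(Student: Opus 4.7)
The plan is to prove by induction on $n$ the stronger statement $H(n)$: starting from $\sigma_\mathds{1}$, the East process can reach the configuration with a single bulk zero at $-(2^n-1)$ via a legal trajectory using at most $n$ bulk zeros at all times. The base $H(0)$ is trivial. Assuming $H(n-1)$, I would realize $H(n)$ by concatenating four phases: (a) apply $H(n-1)$ to place a single zero at $-(2^{n-1}-1)$ with $\leq n-1$ zeros used; (b) legally flip $-2^{n-1}$, using the zero at $-(2^{n-1}-1)$ as facilitator, obtaining two zeros; (c) apply a translated copy of $H(n-1)$ on the half-line $(-\infty, -(2^{n-1}+1)]$ with $-2^{n-1}$ playing the role of the new frozen boundary — this sub-construction never touches the zero at $-(2^{n-1}-1)$ and uses $\leq n-1$ zeros in its own region, so the global count stays $\leq n$; (d) by the reversibility of each East move, run the trajectory of phase (a) backwards to dismantle the zero at $-(2^{n-1}-1)$, leaving the single zero at $-(2^n-1)$. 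This yields $\ell(n) \geq 2^n - 1$.

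\textbf{Upper bound in (i).} I plan to induct on $n$, proving the recurrence $\ell(n) \leq 2\ell(n-1)+1$. Given any trajectory with $\leq n$ zeros reaching $-L$, let $t_1$ be the first time a zero appears at depth $d := \ell(n-1)+1$. On $[0,t_1)$ all zeros must stay at depth $\leq \ell(n-1)$, by the inductive hypothesis applied to the sub-dynamics on $\{-\ell(n-1),\ldots,-1\}$: indeed the factorization property described after \eqref{estendiamo}, together with the fact that positions $\leq -d$ remain at $1$ throughout this interval, reduces the restricted process to a genuine East process on that finite window. At $t_1$ a zero at $-d$ is created via the facilitator at $-\ell(n-1)$. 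After $t_1$, consider the ``left sub-process'' on $\{-L,\ldots,-(d+1)\}$; it is an East system interacting with the right only through $-d$, since a flip at its rightmost site $-(d+1)$ requires $\sigma(-d)=0$. A monotone coupling with the idealized process in which $\sigma(-d)$ is held \emph{frozen} at $0$ shows that the true reachable set is contained in the idealized one. In the idealized picture, the site $-d$ permanently consumes one unit of the zero budget, so only $n-1$ zeros are available for the left sub-process; the inductive hypothesis then yields a reach of at most $\ell(n-1)$ beyond $-d$, i.e.\ $L \leq d + \ell(n-1) = 2\ell(n-1)+1$. The main technical obstacle is to make this ``effective budget $n-1$'' assertion rigorous despite the fact that the true boundary zero at $-d$ may be destroyed and recreated many times along the trajectory; the monotone coupling sidesteps this by comparing reachable sets rather than tracking individual histories.

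\textbf{Part (ii).} For the counting bound, the plan is to encode every $\sigma \in V(n)\setminus\{\sigma_\mathds{1}\}$ by the pair $(k, \sigma')$ where $-k$ is the rightmost bulk zero of $\sigma$ and $\sigma'$ is the restriction of $\sigma$ to $(-\infty,-(k+1)]$, viewed as an element of the shifted copy of $V(n-1)$ obtained by treating $-k$ as the new frozen boundary (using again the factorization property). A refinement of (i) for singleton configurations — whose maximal reach is $2^{n-1}$ — restricts $k$ to $\{1,\ldots,2^{n-1}\}$, yielding the recursion
\begin{equation*}
|V(n)| \;\leq\; 1 + 2^{n-1}\,|V(n-1)|,
\end{equation*}
which unfolds to $|V(n)| \leq 2^{\binom{n}{2}} \cdot O(1)$. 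The stated form with the factor $n!\,c^n$ follows by stratifying the recursion by the number of bulk zeros of $\sigma$ and by applying a tighter count of admissible pairs $(k,\sigma')$ as a function of that stratum, with all polynomial and geometric corrections absorbed into the constant $c \leq 0.7$.
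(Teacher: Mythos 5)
There is a genuine gap in the lower bound. Your inductive statement $H(n)$ --- that the \emph{single-zero} configuration at $-(2^n-1)$ is reachable within budget $n$ --- is false for $n\geq 2$. The maximal depth of a reachable single-zero configuration with budget $n$ is $2^{n-1}$, not $2^n-1$; this is exactly the quantity $\tilde\ell(n,1)=\tilde\ell(n-1,n-1)+1$ of the paper's preparatory lemma. Concretely, for $n=2$: the set $V(2)$ contains $\{-2,-3\}$ but not the singleton $\{-3\}$, because to kill the zero at $-2$ one must first place a zero at $-1$, which would make three simultaneous zeros. The failure in your induction occurs in phase (c): when the shifted $H(n-1)$ runs on $(-\infty,-(2^{n-1}+1)]$, \emph{both} the anchor at $-2^{n-1}$ and the residual zero at $-(2^{n-1}-1)$ left by phase (a) are still present, so the global count peaks at $(n-1)+2=n+1$, not $n$. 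Reordering (dismantle (a) before running (c)) repairs the budget, but then two zeros remain at the end, so $H(n)$ is not re-established for the next inductive step. The paper's lemma avoids this by keeping two quantities separate: $\ell(n)$ (maximal reach, any number of zeros) and $\tilde\ell(n,1)$ (maximal singleton reach), linked by $\tilde\ell(n,1)=\ell(n-1)+1$ and the anchoring inequality $\ell(n)\geq\sum_{j=1}^n\tilde\ell(n-j+1,1)$. You would need the same splitting.

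For the upper bound, your ``freeze the zero at $-d$'' comparison is a genuinely different route from the paper's reversed-trajectory argument (reverse the extremal trajectory and peel off the zeros one at a time: the first one removed must already have its right neighbour empty, the next may use at most $\tilde\ell(2,1)$ extra depth, and so on, giving $\ell(n)\leq\sum_{k=1}^n\tilde\ell(k,1)$). However the monotone-coupling step is stated too loosely: when $\sigma(-d)$ flips back to $1$ the global budget permits more than $n-1$ simultaneous zeros in $\{-L,\dots,-(d+1)\}$, so the containment of reachable sets does not follow from a pointwise coupling alone --- one must combine it with the fact that the left sub-process is frozen at $-(d+1)$ during those epochs and account for the budget across such epochs. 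Both routes, once completed, deliver the same recursion $\ell(n)\leq 2\ell(n-1)+1$.

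Part (ii) is not proved in the paper but cited from \cite{CDG}; your recursion $|V(n)|\leq 1+2^{n-1}|V(n-1)|$ is a reasonable first step but by itself only gives $2^{\binom{n}{2}}$ up to constants, and extracting the $n!\,c^n$ correction with $c\leq 0.7$ would require the stratified count you allude to, which is the real content of the cited result.
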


\begin{remark} \label{rem:comb}
The result of Point $(i)$ holds true if one replaces $\Lambda$ by
any finite interval  $[-L,-1]$ with $L \geq 2^n-1$.

More precise statements on the cardinality $|V(n)|$ of $V(n)$ (and
in particular a lower bound of the same order with a different
constant $c$) can be found in \cite{CDG}.

Consider the East process on $[-\ell, -1]$ with a frozen zero at the
origin $0$. Let $n\in \bbN$ be such that  $\ell \in [2^{n-1}+1,2^n]$
and $n \geq 1$, or $\ell=1$ and $n=0$. Suppose the East process
starts from the configuration having a single vacancy located at
$-\ell$ (\ie starts from $01111\cdots1$). Then, due to the above
proposition,  the system must create at least $n$ extra zeros
 in order to kill the vacancy at $-\ell$.
 This
occurs with probability of order $q^n$ and thus with  an activation
time $t_n=(1/q)^n$. This explains the scale $t_n$ introduce in
Section \ref{sec:outIIintro}, and the renormalization length $2^n$.
\end{remark}

Point $(i)$, stated in  \cite[Fact 1 (i)]{CDG}, had already been
noticed in \cite{SE2},\cite[Section B]{SE1}. In order to be self
contained and to clarify a mechanism which will be at the heart of
the behavior of the  East process  when $q\downarrow  0$, we provide
its proof. With respect to \cite{CDG}, we present a slightly
different approach based on the arguments of \cite{SE1} .

The proof of Proposition \ref{prop:comb} is based on the following
lemma:
\begin{Lemma}
 Let $V(n,k)$, for $0 \leq k\leq n$, be the subset of $V(n)$ given by
those configurations which contain exactly $k$ zeros. Let
 $$
 \tilde \ell(n,k):=\sup\left\{|y|: y \in (-\infty,-1], \exists \eta\in V(n,k)\, {\mbox{s.t.}}\,\eta(y)=0\right\}
 $$
 be the position of the leftmost zero considering  all the configurations of
 $V(n,k)$.

 Then the following holds:
 \begin{itemize}

 \item[(i)]  $\tilde\ell(n,k)$ is increasing in $k$ for $n\geq k \geq 0$,

 \item[(ii)] $\ell(n)=\tilde\ell(n,n)$ for $n\geq 0$,

\item[(iii)] $\tilde\ell(n,1)=\tilde\ell(n-1,n-1)+1$ for $n \geq 1$,

\item[(iv)]  $\tilde\ell(n,k)\geq\sum_{j=1}^{k}\tilde\ell(n-j+1,1)$ for $n \geq k \geq 1$.
\end{itemize}

\end{Lemma}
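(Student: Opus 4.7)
The plan is to exploit both the oriented structure of the East constraint (a flip at $x$ is enabled only by a zero at $x+1$) and the reversibility of individual East moves. For claim (i), given $\eta \in V(n,k)$ with $k < n$ realizing the leftmost zero at $-\tilde\ell(n,k)$, the flip at site $-\tilde\ell(n,k)-1$ is legal since its East neighbor $-\tilde\ell(n,k)$ is a zero; the resulting configuration lies in $V(n,k+1)$ and has its leftmost zero at $-\tilde\ell(n,k)-1$, so $\tilde\ell(n,k+1) \ge \tilde\ell(n,k)+1$. Claim (ii) is then immediate: since $V(n)=\bigcup_{k=0}^{n}V(n,k)$ and $\tilde\ell(n,0)=0$, the supremum defining $\ell(n)$ is attained at $k=n$.

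For (iv), I would place the $k$ target zeros one at a time from right to left, using each freshly placed zero as a frozen anchor for the next phase. In phase $j$ the East process is run on the half-line strictly to the left of the last placed zero with budget $n-j+1$ (the $j-1$ previously placed zeros each occupy a slot of the global budget $n$), and by the definition of $\tilde\ell(n-j+1,1)$ this phase can end with exactly one additional zero at distance $\tilde\ell(n-j+1,1)$ further to the left; summing the displacements yields the bound $\sum_{j=1}^{k}\tilde\ell(n-j+1,1)$. The lower bound $\tilde\ell(n,1)\ge \ell(n-1)+1$ in (iii) uses the same spirit in a \emph{build and unwind} version: first reach a configuration in $V(n-1,n-1)$ with leftmost zero at $-\ell(n-1)$, then flip $-\ell(n-1)-1$ (legal, with total count momentarily $\le n$), and finally reverse the first phase to clear every zero from $[-\ell(n-1),-1]$, leaving only the zero at $-\ell(n-1)-1$. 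The reversal is legal because East flips inside $[-\ell(n-1),-1]$ only use the anchor at $0$ and are unaffected by the newly frozen zero further to the left.

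The delicate part is the upper bound $\tilde\ell(n,1)\le \ell(n-1)+1$. I would take any path from all-ones to $\eta=\{-m\}\in V(n,1)$ and consider the \emph{last} time $t^\star$ at which a zero is created at $-m$; from $t^\star$ on, that zero survives until the end, and the remaining dynamics must erase every other zero. Since East flips inside $[-m+1,-1]$ depend only on the anchor at $0$ and are indifferent to the state of $-m$ or of any site further to the left, the restriction of the clean-up trajectory to $[-m+1,-1]$ is itself a valid East sub-process, using at most $n-1$ zeros (one slot of the global budget is permanently occupied by $-m$) and ending at all-ones on $[-m+1,-1]$. By reversibility the time-reversed restricted clean-up is a path from all-ones to $\eta^\star|_{[-m+1,-1]}$ using $\le n-1$ zeros throughout, placing that configuration in $V(n-1)$ for the restricted process; since its leftmost zero is at $-m+1$, we conclude $m-1 \le \ell(n-1)$. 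The main obstacle is precisely this step: the naive symmetric argument using the \emph{first} appearance of a zero at $-m$ and the build-up sub-path on $[-m+1,-1]$ fails because that build-up can legitimately saturate all $n$ zero slots, so its restriction is not a priori in $V(n-1)$. Working with the last creation instead permanently parks one zero at $-m$, dropping the effective budget to $n-1$, after which reversibility converts an $(n-1)$-zero clean-up into the required $(n-1)$-zero construction.
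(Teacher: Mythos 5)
Your proposal is correct and follows essentially the same route as the paper: the constructive one-step extension for (i)--(ii), the iterated ``place a zero, use it as the new anchor'' scheme for (iv), and, for the delicate direction of (iii), reversing the clean-up phase that follows the (last) creation of the zero at $-\tilde\ell(n,1)$ and restricting it to $[-\tilde\ell(n,1)+1,-1]$, where the budget drops to $n-1$ because one slot is permanently held by the surviving zero. You are in fact slightly more explicit than the paper on two points --- why one must reverse the clean-up rather than restrict the build-up, and the ``build and unwind'' construction giving $\tilde\ell(n,1)\ge \tilde\ell(n-1,n-1)+1$ --- but the underlying ideas are identical.
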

\begin{proof}
To prove Item (i)
 we  only need to
 exhibit, for $k<n$,
  a configuration in $V(n,k+1)$ with
a zero at  $-[\tilde\ell(n,k) +1]$, \ie a configuration with $k+1$
zeros,  one of which at $-[\tilde\ell(n,k) +1]$, that the system can
reach from $\s_\mathds{1}$ without exceeding quota  $n$ zeros. Such
a configuration can be obtained as follows:  starting from
$\s_{\mathds{1}}$ the system reaches  the configuration which
realizes $\tilde\ell(n,k)$ without exceeding quota  $n$ zeros. This
configuration has $k<n$ zeros, therefore  the system is allowed to
  create  an additional zero at  $-[\tilde\ell(n,k) +1]$.

From the fact that $\tilde\ell(n,k)$ is increasing in $k$, we get
the identity $\tilde\ell(n,n)=\ell(n)$ stated in Item (ii).

To prove Item (iii) we observe that, just before creating the zero
at $-\tilde\ell(n,1)$,  the system should have a zero at
 $-\tilde\ell(n,1)+1$. Moreover, after creating  the zero at
 $-\tilde\ell(n,1)$, the system should  remove all zeros on the
 right of $-\tilde \ell (n,1)$  without exceeding quota $n$ zeros  and
 therefore without exceeding quota $n-1$ zeros on $[- \tilde \ell
 (n,1)+1, -1]$. Reversing this last part of the evolution and disregarding what happens outside
 $[- \tilde \ell
 (n,1)+1, -1]$, we get a trajectory starting  from  the fully filled
 configuration $\s_\mathds{1}$ and realizing the zero at $ - \tilde \ell
 (n,1)+1$ without exceeding quota $n-1$ zeros.  The conclusion
 then follows by applying Item (i).

It remains to prove Item (iv).  In order to realize the path which
leads to a configuration with $k$ zeros, one of which at
$-\tilde\ell(n,k)$ and without exceeding quota $n$ zeros, the system
can  first create a single zero at $-\tilde\ell(n,1)$ without
exceeding quota $n$ zeros, then it can use this zero  as an anchor
to create a further zero  at $-\tilde\ell(n,1)-\tilde\ell(n-1,1)$ by
means of a path with  at most $n$ simultaneous zeros including the
one at $-\tilde\ell(n,1)$. By continuing this procedure we get the
inequality of Item (iv).
\end{proof}

We can  now come back to Proposition \ref{prop:comb}:

\smallskip

\noindent {\sl Proof of Prop. \ref{prop:comb}}. Combining Items
(iii) and (iv) of  the above lemma,
 we get
 $$
 \tilde\ell(n,n)\geq\sum_{j=1}^n\tilde\ell(n-j,n-j)+n = \sum
 _{j=0}^{n-1}\tilde \ell (j,j) +n\,.
 $$
 Then, using the fact that $\tilde\ell(0,0)=0$, we get by induction that
 $$
 \tilde\ell(n,n)\geq 2^n-1 .
 $$
 Using Item (ii) of the above lemma, we conclude that $\tilde \ell
 (n) \geq 2^n-1$.

 The proof of the reverse inequality $\tilde\ell(n)=\tilde\ell(n,n) \leq 2^n-1$ goes by
 induction. It is trivially fulfilled for $n=0$.
 Suppose the inequality holds up to $n-1$, where $n \geq 1$. Then, Item (iii) in the above Lemma
  implies that $\tilde \ell(k,1)\leq 2^{k-1}$ for all $k: 1\leq k \leq n$.
 Then consider a configuration $\eta$ which realizes $\tilde\ell(n,n)$. By definition $\eta$  should contain $n$ zeros and
 by using reversibility we know there should exists a path which kills all these zeros with at each step at most $n$ zeros.
 Thus at least one of the $n$ zeros of $\eta$ should be such that it can be killed without creating further zeros, namely it should have the next zero to the right at distance $\tilde\ell(1,1)=1$. Let for simplicity $\eta$ contain only one such zero (otherwise the
 strategy can be easily adapted) and consider the configuration $\eta'$ obtained by killing this zero. Then $\eta'$
  should contain at least a zero which has the next zero at   distance at most  $\tilde\ell(2,1)$
   (it should be killed by adding at most one extra zero) and the configuration obtained by killing this zero should contain at
    least a zero which has the next zero  at distance at most $\tilde\ell(3,1)$ and so on. By using this observation and the iterative assumption which guarantees that  $\tilde\ell(k,1)\leq 2^{k-1}$ for all $k\leq n$ we get
$$\tilde\ell(n,n)\leq 1+2^1+\dots 2^{n-1}=2^n-1$$ which, together with the above lower bound leads to the desired result $\ell(n)=\tilde\ell(n,n)=2^n-1$.

The proof of Point $(ii)$ can be found in \cite[Theorem 2, 4 and
5]{CDG}. \qed


\section{Spectral gap: proof of Theorem \ref{th:gap}} \label{sec:coercive}

In this section we prove Theorem \ref{th:gap}.
We start with the proof of Point $(iii)$ which is the easiest.
Actually we will prove the following stronger useful result
(from which Point $(iii)$ immediately follows, details are left to the reader).

\begin{Proposition}[Monotonicity of the spectral gap \cite{CMRT}]\label{prop:monotony}
Let $V \subset \Lambda$ be two intervals of $\bbZ$ (possibly infinite). Then
$$
\gap(\cL_\Lambda) \leq \gap(\cL_V).
$$
\end{Proposition}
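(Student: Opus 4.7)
The plan is to prove the inequality via the variational characterization of the spectral gap given in \eqref{def_gap}. Since $V \subset \Lambda$, any function $f:\Omega_V \to \bbR$ extends to a function on $\Omega_\Lambda$ (still denoted $f$) by declaring it independent of the coordinates $\sigma_{\Lambda\setminus V}$. The strategy is to show that this lift satisfies $\cD_\Lambda(f) \leq \cD_V(f)$ while $\var_\Lambda(f) = \var_V(f)$, and to take the infimum over such test functions.

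The variance identity is immediate: since $\pi_\Lambda = \pi_V \otimes \pi_{\Lambda\setminus V}$ and $f$ depends only on $\sigma_V$, one has $\pi_\Lambda(f)=\pi_V(f)$ and $\pi_\Lambda(f^2)=\pi_V(f^2)$, hence $\var_\Lambda(f) = \var_V(f)$. For the Dirichlet form
$$\cD_\Lambda(f) \;=\; \sum_{x \in \Lambda} \pi_\Lambda\bigl(c_x^\Lambda\, \var_x(f)\bigr),$$
the terms with $x \in \Lambda \setminus V$ vanish because $\var_x(f)=0$ whenever $f$ does not depend on $\sigma(x)$. For every $x \in V$ strictly less than $b := \max V$, the East constraint $c_x^\Lambda = 1-\sigma(x+1)$ coincides with $c_x^V$ since $x+1 \in V$, and integration against $\pi_{\Lambda\setminus V}$ is trivial, so the contribution of $x$ to $\cD_\Lambda(f)$ equals its contribution to $\cD_V(f)$.

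The only subtle site is $x = b$. If $b = \max \Lambda$, then by the definition in \eqref{bersani} we have $c_b^\Lambda = 1 = c_b^V$, so the boundary terms match and actually $\cD_\Lambda(f) = \cD_V(f)$. Otherwise $b+1 \in \Lambda \setminus V$ and $c_b^\Lambda = 1-\sigma(b+1)$, while $c_b^V = 1$; integrating the factor $1-\sigma(b+1)$ against $\pi_{b+1}$ replaces $1$ by $q$, yielding
$$\cD_\Lambda(f) \;=\; \cD_V(f) \;-\; (1-q)\,\pi_V\bigl(\var_b(f)\bigr) \;\leq\; \cD_V(f).$$
The analogous treatment applies when $V$ or $\Lambda$ is a half-line or $\bbZ$ itself, by restricting to local test functions.

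Combining the two bounds gives $\cD_\Lambda(f)/\var_\Lambda(f) \leq \cD_V(f)/\var_V(f)$ for every non-constant $f:\Omega_V\to \bbR$. Since every such $f$ is also an admissible test function for $\gap(\cL_\Lambda)$, taking the infimum over $f$ and using \eqref{def_gap} yields $\gap(\cL_\Lambda) \leq \gap(\cL_V)$. There is no serious obstacle here: the argument rests entirely on the one-sided (East) character of the constraint, which guarantees that for $x<b$ the constraint at $x$ only involves sites inside $V$, so the lift is ``cheaper'' in Dirichlet energy, with the possible loss coming exclusively from the right endpoint of $V$.
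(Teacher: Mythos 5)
Your argument is correct and is essentially the same as the paper's: both lift a test function $f$ on $\Omega_V$ to $\Omega_\Lambda$, observe $\var_\Lambda(f)=\var_V(f)$ by the product structure of $\pi_\Lambda$, note that $c_x^\Lambda\leq c_x^V$ for $x\in V$ (you make this quantitative at the right endpoint), and conclude from the variational characterization \eqref{def_gap}. The only difference is expository: the paper compresses the Dirichlet-form comparison into the single inequality $c_x^\Lambda(\sigma)\leq c_x^V(\sigma)$, whereas you unpack the three cases (interior sites, sites outside $V$, and the boundary site $b=\max V$) and even record the explicit deficit $(1-q)\,\pi_V(\var_b(f))$.
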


\begin{proof}
For any local function $f: \O_V \to \bbR$ we have
$\Var_V(f)=\Var_\L(f)$ because of the product structure of the
measure $\pi_\L$ and $\cD_\L(f)\le \cD_V(f)$ because, for any $x\in
V$ and any $\sigma \in \O_{\L}$, $c_{x}^{\L}(\sigma) \le
c_{x}^{V}(\sigma)$. The result follows at once from the variational
characterization of the spectral gap.
\end{proof}

Point $(i)$ and the upper bound in Point $(ii)$ of Theorem
\ref{th:gap} follow from the next result. We use the following
standard notation: $\log_2 a=\log a/\log 2$, $a>0$.

\begin{Theorem}[\cite{CMRT}]\label{th:cmrt-gap}
For any $\d\in (0,1)$ there exists $C_\d>0$ such that
\begin{equation} \label{eq:th1}
\gap(\cL) \ge q^{C_\d} q^{\log_2(1/q)/(2-\d)} .
\end{equation}
In particular
\begin{equation}
\limsup_{q\to 0} \log(1/\gap(\cL))/(\log(1/q))^2 \leq \left(2\log 2\right)^{-1} .
\label{eq:th2}
\end{equation}
\end{Theorem}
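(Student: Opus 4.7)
My strategy is a recursive bisection analysis of the finite--volume spectral gap, combined with the combinatorial input of Proposition \ref{prop:comb} and the monotonicity result of Proposition \ref{prop:monotony}. I first establish a scale--doubling recursion for $\gap(\cL_{[1,2^n]})$, then iterate it up to a carefully chosen scale $L^{*}(q)$, and finally transfer the resulting estimate to $\bbZ$.

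\textbf{Step 1: Dyadic bisection.} Set $\gamma_n:=\gap(\cL_{[1,2^n]})$. Given a local $f\colon\Omega_{[1,2^{n+1}]}\to\bbR$, decompose $I=[1,2^{n+1}]=I_L\cup I_R$ with $I_L=[1,2^n]$ and $I_R=[2^n+1,2^{n+1}]$, and use the conditional variance decomposition
\begin{equation*}
\var_I(f)\leq \pi_I\!\left(\var_{I_L}(f\mid \sigma_{I_R})\right)+\var_{I_R}\!\left(\pi_{I_L}(f)\right).
\end{equation*}
On the event $\{\sigma(2^n+1)=0\}$ the constraint at site $2^n$ is trivially satisfied, so the first term is controlled by $\gamma_n^{-1}$ times the Dirichlet form on $I_L$; the second term is controlled by $\gamma_n^{-1}$ times the Dirichlet form on $I_R$ via the induction hypothesis.

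\textbf{Step 2: Paying for the auxiliary vacancy.} The delicate point is that $\sigma(2^n+1)=0$ has probability only $q$. When it fails, one must relocate a vacancy originally located further right in $I_R$ to site $2^n+1$. Proposition \ref{prop:comb}(i) guarantees that this relocation is possible using at most $n$ simultaneous extra vacancies, while Proposition \ref{prop:comb}(ii) bounds the entropy of admissible rearrangements. A canonical--path argument (weighted by $\pi$) then converts these two facts into a recursion
\begin{equation*}
\gamma_{n+1}^{-1}\leq K_n(q)\,\gamma_n^{-1},\qquad K_n(q)\leq q^{-1-o(1)},
\end{equation*}
the factor $q^{-1}$ reflecting the probability of the enabling vacancy and $q^{-o(1)}$ reflecting the combinatorial cost of moving it into place.

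\textbf{Step 3: Iteration and optimal scale.} Iterating from $\gamma_0=\Theta(1)$ yields $\gamma_n^{-1}\leq \prod_{k<n}K_k(q)$. The recursion is stopped at the scale $n^{*}$ at which the expected number of vacancies $q\cdot 2^{n^{*}}$ in $I_R$ is just large enough to sustain the combinatorial mechanism of Proposition \ref{prop:comb}. The key trade--off is between the energetic cost $q^{-n}$ of maintaining $n$ simultaneous vacancies along the canonical path, and the entropic gain from the $\sim n$ typical vacancies already present; optimizing gives $n^{*}\sim \log_2(1/q)/(2-\delta)$, which is where the improved factor $(2-\delta)^{-1}$ (rather than the naive $1$) in the exponent is produced.

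\textbf{Step 4: Closure and transfer to $\bbZ$.} For scales beyond $L^{*}:=2^{n^{*}}$ the typical abundance of vacancies in each block of length $L^{*}$ prevents any further degradation of the gap, up to a polynomial correction $q^{C_\delta}$. Proposition \ref{prop:monotony} then gives
\begin{equation*}
\gap(\cL)\;\geq\;\gap(\cL_{[1,L^{*}]})\;\geq\;q^{C_\delta}\,q^{\log_2(1/q)/(2-\delta)},
\end{equation*}
which is \eqref{eq:th1}; letting $\delta\downarrow 0$ yields \eqref{eq:th2}.

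\textbf{Main obstacle.} The crux is Step 3, namely extracting the sharp exponent $(2-\delta)^{-1}$ rather than the naive $1$. This requires a careful entropy/energy balance in the canonical--path bound and exploits the \emph{sharpness} of Proposition \ref{prop:comb}(i), \ie the fact that exactly $n$ simultaneous vacancies suffice to reach distance $2^n-1$. Without this tight combinatorial input one recovers only the weaker asymptotic constant $(\log 2)^{-1}$ instead of $(2\log 2)^{-1}$ in \eqref{eq:th2}; the novelty of the CMRT proof lies precisely in converting this combinatorics into a spectral bound.
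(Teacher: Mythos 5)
Your proposal takes a genuinely different route from the paper, but two crucial steps are unjustified and a third is an outright error, so the argument as written does not establish \eqref{eq:th1}.

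\textbf{The recursion factor (Step 2).} The paper's bisection--constrained method uses \emph{overlapping} intervals $\L_1,\L_2$ with overlap strip $I$ of size $\sim\d_k= \inte{l_k^{1-3\d}}$, and conditions the refresh of $B_1=\L\setminus\L_2$ on the event that $I$ contains a vacancy, which has probability $1-p^{|I|}$. The recursion constant is then $\sim(1-\sqrt{\e_k})^{-1}\approx(q\d_k)^{-1}$, i.e.\ it \emph{decreases geometrically in the scale $k$}: this is precisely the entropic gain, coming from the growing window $I$ in which the enabling zero may sit. Your non-overlapping bisection conditions on $\{\s(2^n+1)=0\}$, which has probability exactly $q$, with no such gain; and when this event fails, the cost of relocating a vacancy from the nearest one in $I_R$ (say at distance $\ell$) is $q^{-\log_2\ell}$ by Proposition \ref{prop:comb}(i), which can be as large as $q^{-n}$. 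Proposition \ref{prop:comb}(ii) counts configurations reachable with at most $n$ zeros and is the correct tool for the \emph{upper} bound on the gap (Theorem \ref{th:cmst-gap}, via a hitting-time estimate), but it does not, by itself, cancel the energy cost $q^{-n}$ in a canonical-path bound: nothing in your Step 2 explains how the entropy enters the path weights so as to yield $K_n(q)\le q^{-1-o(1)}$. Making a path argument sharp is exactly what Aldous--Diaconis attempted and found ``subtle and intricate'', still landing off by a factor $1/2$ in the exponent; the paper sidesteps the path argument entirely.

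\textbf{The stopping rule (Step 3).} You stop at $n^*\sim\log_2(1/q)/(2-\d)$ and appeal to ``typical abundance of vacancies''. But at scale $L^*=2^{n^*}=(1/q)^{1/(2-\d)}$ the expected number of vacancies is $qL^*=q^{(1-\d)/(2-\d)}\ll 1$: a block of that length is typically \emph{void} of vacancies, so the abundance heuristic is at the wrong scale. In the paper the iteration is carried out over \emph{all} scales $k$; the constant $(1-\sqrt{\e_k})^{-1}(1+1/s_k)$ crosses over automatically from $\sim(q\d_k)^{-1}$ to $\sim 1+\sqrt{\e_k}$ near $j_*\sim\log_2(1/q)/(1-3\d)$, and the coefficient $(2\log2)^{-1}$ in \eqref{eq:th2} arises from the telescoping sum $\sum_{k\le j_*}k(1-3\d)\sim(1-3\d)j_*^2/2$ in the exponent, not from truncating the iteration. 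Your numbers coincidentally match because a constant factor $q^{-1}$ iterated $\tfrac12\log_2(1/q)$ times equals a factor $q^{-1}2^{-k}$ iterated $\log_2(1/q)$ times; but neither the constant $q^{-1-o(1)}$ nor the termination at $n^*$ is established.

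\textbf{Transfer to $\bbZ$ (Step 4).} Proposition \ref{prop:monotony} states $\gap(\cL_\L)\le\gap(\cL_V)$ for $V\subset\L$, hence $\gap(\cL)\le\gap(\cL_{[1,L^*]})$ --- the \emph{opposite} of the inequality you write. What is actually needed (and what the paper does) is a bound on $\gap(\cL_\L)$ that is \emph{uniform over all finite intervals $\L$}, from which $\gap(\cL)\ge\inf_\L\gap(\cL_\L)$ follows by the exhaustion/local-function argument; a bound at the single scale $L^*$ does not transfer.
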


The lower bound in Point $(ii)$ of Theorem \ref{th:gap} is proven in
\cite{Aldous} by a subtil but somehow intricate argument based on
path techniques. In \cite[Theorem 5.1]{CMST} the authors give an
alternative (softer) proof. Below  we reproduce such proof in order
to clarify the  role played by  energy barriers.

\begin{Theorem}[\cite{CMST}] \label{th:cmst-gap}
For any $\delta \in (0,1)$, there exists $C_\delta>0$ such that
\begin{equation*}
\gap(\cL) \le C_\delta q^{\log_2(1/q)(1-\delta)/2} .
\end{equation*}
In particular
\begin{equation*}
\liminf_{q\to 0} \log(1/\gap(\cL))/(\log(1/q))^2 \geq \left(2\log 2\right)^{-1} .
\end{equation*}
\end{Theorem}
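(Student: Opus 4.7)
The plan is to combine the Rayleigh quotient characterization of the spectral gap with the monotonicity in volume of Proposition~\ref{prop:monotony} and with the sharp combinatorial estimates of Proposition~\ref{prop:comb}, in order to exhibit an explicit test function achieving the claimed upper bound on a carefully chosen finite interval.

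First I would reduce to finite volume. By Proposition~\ref{prop:monotony} one has $\gap(\cL)\le\gap(\cL_\Lambda)$ for any interval $\Lambda$, so it is enough to find a non-constant $f\colon\Omega_\Lambda\to\bbR$ satisfying $\cD_\Lambda(f)/\Var_\Lambda(f)\le C_\delta\, q^{\log_2(1/q)(1-\delta)/2}$. I would fix an integer $n\sim\log_2(1/q)$ (precisely $n=\lfloor\log_2(1/q)\rfloor$, so that $Rq\approx 1$) and take $\Lambda=[1,R]$ with $R=2^n-1$ and a frozen zero at $R+1=2^n$. The test function is the indicator $f=\mathds{1}_A$, where $A$ is the subset of $\Omega_\Lambda$ that, after transferring the setup of Section~\ref{sec:comb} to $[1,R]$ by translation, coincides with $V(n-1)$: the configurations reachable from the all-ones state by a legal trajectory that never carries more than $n-1$ simultaneous vacancies.

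Second I would estimate the two ingredients of the Rayleigh quotient. The key observation, which follows from the closure of $V(n-1)$ under legal flips keeping the vacancy count below $n$, is that every boundary transition $(\sigma,\sigma^x)$ with $\sigma\in A$ and $\sigma^x\notin A$ must satisfy $|\cZ(\sigma)|=n-1$ and $|\cZ(\sigma^x)|=n$: otherwise appending the flip to a quota-$(n-1)$ path leading to $\sigma$ would exhibit $\sigma^x\in V(n-1)$, contradicting $\sigma^x\notin A$. Consequently
\begin{equation*}
\cD_\Lambda(\mathds{1}_A)\le n\,pq\cdot |V(n-1)|\cdot q^{n-1}p^{R-n+1},
\end{equation*}
where the factor $n$ controls the number of possible exit flips per boundary configuration. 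By Proposition~\ref{prop:comb}(ii) we have $|V(n-1)|\le 2^{\binom{n-1}{2}}(n-1)!\,c^{n-1}=2^{n^2/2+o(n^2)}$. For the variance, Proposition~\ref{prop:comb}(i) forces every $\sigma\in V(n-1)$ to satisfy $\sigma(y)=1$ for $y\in[1,2^{n-1}]$, so $p^R\le\pi_\Lambda(A)\le p^{2^{n-1}}$; since $Rq\approx 1$ and $2^{n-1}q\approx 1/2$, both bounds are of order $e^{-1}$ and $e^{-1/2}$ respectively, whence $\pi_\Lambda(A)$ is uniformly bounded away from $0$ and $1$ and $\Var_\Lambda(\mathds{1}_A)=\Theta(1)$. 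Writing $q=2^{-\log_2(1/q)}$, combining these estimates yields
\begin{equation*}
\gap(\cL)\le\frac{\cD_\Lambda(\mathds{1}_A)}{\Var_\Lambda(\mathds{1}_A)}\le C\,2^{-(\log_2(1/q))^2/2+o((\log_2(1/q))^2)}=C\,q^{\log_2(1/q)(1+o(1))/2},
\end{equation*}
which for each $\delta\in(0,1)$ is dominated by $C_\delta\, q^{\log_2(1/q)(1-\delta)/2}$ as $q\downarrow 0$; for $q$ bounded away from $0$ the assertion is trivial after enlarging $C_\delta$.

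The main obstacle will be the combinatorial bookkeeping: one must verify rigorously that the exit flips from $V(n-1)$ are precisely those creating an $n$-th vacancy, control the number of such flips per boundary configuration, and use effectively the sharp entropy bound of Proposition~\ref{prop:comb}(ii). The crucial factor $1/2$ in the exponent arises from the exact balance, in the cut estimate, between the Boltzmann cost $q^n$ of a saddle and the combinatorial entropy $|V(n)|\sim 2^{n^2/2}$ of saddle configurations, as highlighted in the remark after Theorem~\ref{th:gap}; without this entropic contribution the naive energy-only estimate would give only $\gap(\cL)\le q^{\log_2(1/q)(1+o(1))}$, off by a factor $2$ in the exponent.
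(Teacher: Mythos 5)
Your argument is correct and, while it leans on the same combinatorial inputs (Proposition~\ref{prop:comb} for the scale $2^{n}$ and for the entropy of the reachable set), it follows a genuinely different route than the paper. The paper's proof is \emph{dynamic}: it proves a general hitting-time inequality (Lemma~\ref{prop:amine}, $\bbP^\L_{\pi_\L}(\tau_A>t)\le e^{-t\gap(\cL_\L)\pi_\L(A)}$), applies it with $A=\{\s(0)=0\}$, lower-bounds the hitting time starting from $\mathds{1}$ by the time to first see $n\approx\log_2 \ell$ simultaneous vacancies (using Prop.~\ref{prop:comb}(i)), and then bounds the probability of ever visiting such a rare configuration within time $t$ by a time-change/union-bound argument together with the entropy estimate Prop.~\ref{prop:comb}(ii). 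Your proof is \emph{variational}: you exhibit $\mathds{1}_{V(n-1)}$ directly as a test function, using the structural observation that the reachable set $V(n-1)$ is closed under all legal flips that do not create an $n$-th vacancy, so the only exit transitions are zero-creations from quota-$(n-1)$ saddle configurations; this reduces the Dirichlet form to $O\bigl(nq^n|V(n-1)|\bigr)$, while $\pi_\L(V(n-1))$ is squeezed between $p^R$ and $p^{2^{n-1}}$ (hence $\Theta(1)$) by Prop.~\ref{prop:comb}(i). The test-function route is arguably cleaner and shows the metastable set explicitly; the paper's hitting-time route sidesteps the need to describe the boundary of $V(n-1)$ precisely and makes the dynamical interpretation (the bottleneck is creating $n$ simultaneous vacancies) transparent. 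Two small bookkeeping remarks: in the Dirichlet-form bound the per-pair factor works out to $q\,q^{n-1}p^{R-n+1}=q^{n}p^{R-n+1}$ rather than $pq\cdot q^{n-1}p^{R-n+1}$ (immaterial for the asymptotics, but as written your inequality is off by a factor $p$ in the wrong direction), and the constants $e^{-1}$, $e^{-1/2}$ in the variance estimate actually oscillate with the fractional part of $\log_2(1/q)$; what matters, and what you correctly conclude, is that $\pi_\L(V(n-1))$ stays uniformly in a compact subinterval of $(0,1)$.
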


\begin{proof}[Proof of Theorem \ref{th:cmrt-gap}]
The limiting result \eqref{eq:th2} follows at once from \eqref{eq:th1}.

In order to get the lower bound \eqref{eq:th1} we will apply the
bisection-constrained method introduced in \cite{CMRT}, which
extends the classical bisection method \cite{SFlour}. Observe first
that, due to Proposition \ref{prop:monotony},
 $\gap(\cL) \geq \inf_{\Lambda}
\gap(\cL_\Lambda)$ where the infimum runs over all possible finite
intervals $\Lambda = [a,b] \subset \mathbb{Z}$.  Hence, our aim is
to prove a lower bound on $\gap(\cL_\Lambda)$, uniformly in
$\Lambda$.

 Fix
$\d\in (0,1/6)$ and define $l_k=2^k$, $\d_k=\inte{l_k^{1-3\d}}$ and
$s_k:=\inte{l_k^{\d}}$, $\inte{\cdot}$ denoting the integer part.
Let also $\bbF_k$ be the set of intervals which, modulo
translations, have the form $[0,\ell]$ with $\ell\in
[0,l_k+l_k^{1-\d}]$. As in \cite{cesi}, given $\Lambda = [a,b] \in
\bbF_k \setminus \bbF_{k-1}$, for $i=1,\dots,s_k$ we define
$\Lambda_1^{(i)}:=[a,\frac{b+a}{2} + \frac{2i}{8} \delta_k]$ and
$\Lambda_2^{(i)}:=[\frac{b+a}{2} + \frac{2i-1}{8} \delta_k, b]$ so
that the sequence $\{\L_1^{(i)}, \L_2^{(i)}\}_{i=1}^{s_k}$ satisfies
the following properties:
\begin{enumerate}[(i)]
\item $\L = \L_1^{(i)} \cup \L_2^{(i)}$,
\item  $d(\L\setm \L_1^{(i)}, \L\setm \L_2^{(i)}) \ge \d_k/8 $,
\item $\left(\L_1^{(i)}\cap \L_2^{(i)}\right)\cap \left(\L_1^{(j)}\cap \L_2^{(j)}\right) = \emp$,  if $i\ne j$
\item $\L_1^{(i)}, \L_2^{(i)} \in \bbF_{k-1}$.
\end{enumerate}
Above, $d(\cdot, \cdot)$ denotes the
Euclidean distance.
\begin{remark}
In other words, any set of $\bbF_k \setminus \bbF_{k-1}$
can be obtained as a "slightly overlapping union" of two
intervals in $\bbF_{k-1}$.
\end{remark}
Define
$$
\gamma_k = \sup_{\Lambda \in \bbF_k} \gap(\cL_\Lambda)^{-1} .
$$
Due to   Proposition \ref{prop:monotony} the above supremum  is
attained on the intervals $\L_k=[a,a+l_k+l_k^{1-\d}]$. Applying the
bisection--constrained method introduced in \cite{CMST}, we want to
 establish  a   recursive inequality between $\gamma_k$ and
 $\gamma_{k-1}$. To this aim we
fix $\L\in \bbF_k\setm \bbF_{k-1}$ and write it as $\L = \L_1 \cup
\L_2$ with $\L_1, \L_2 \in \bbF_{k-1}$ as above (we drop the
superscript $(i)$, recall that $\L_1$ is on the left of $\L_2$).
Moreover, we set $I\equiv \L_1 \cap \L_2$. We now run the following
constrained ``block dynamics'' on $\O_\L$ with blocks
$B_1:=\L\setminus \L_2$ and $B_2:=\L_2$ (see Figure
\ref{fig:bisezione} below). The block $B_2$ waits a mean one
exponential random time and then the current configuration inside it
is refreshed with a new one sampled from $\pi_{\L_2}$. The block
$B_1$ does the same but now the configuration is refreshed only if
the current configuration $\s$ contains \emph{at least} one zero
inside the strip $I$.

The Dirichlet form of this auxiliary
chain is simply
$$
\cD_{\rm block}(f)=\pi_\L\left(c_1\Var_{B_1}(f)+\Var_{B_2}(f)\right)
$$
where $c_1(\sigma)$ is just the indicator of the event that $\sigma$
contains at least one zero inside the strip $I$ (as an illustration,
$c_1(\sigma)=1$ for the configuration given in Figure
\ref{fig:bisezione}). Recall that  $\Var_{B_1}(f)$, $\Var_{B_2}(f)$
depends only on  $\s_{B_1^c}$, $\s_{B_2^c}$ respectively.

Denote by $\g_{\rm block}(\L)$ the inverse spectral gap of the
auxiliary Markov chain on $\O_\L$ with block dynamics. The following
bound, whose proof can be found in \cite{CMRT} and at the end of
this section for completeness, is not difficult to prove.

\begin{Proposition} [\cite{CMRT}] \label{gapblock}
Let $\e_k \equiv {\displaystyle \max_{I}} \{\pi(\forall x \in I,
\sigma(x)=1) \}=  p^{\min_{I} |I|} $ where the $\max_{I}$ and the
$\min_{I}$ are taken over the $s_k$ possible choices of the pairs
$\left(\L_1,\L_2\right)$ and $I= \L_1 \cap \L_2$. Then
$$
\g_{\rm block}(\L) \le \frac{1}{1-\sqrt{\e_k}} .
$$
\end{Proposition}

As a consequence of the above result,
 by  writing down the standard Poincar\'e inequality
for the block auxiliary chain, we get  for any $f:\O_\L \to
\mathbb{R}$ that
\begin{equation}
  \label{eq:s1}
\Var_\L(f)\le \frac{1}{1-\sqrt{\e_k}}
\pi_\L\Bigl(c_1\Var_{B_1}(f)+\Var_{B_2}(f)\Bigr)\, .
\end{equation}
The second term, using the definition of $\g_{k-1}$ and the fact
that $B_2\in \bbF_{k-1}$, is bounded from above by
\begin{equation}
  \label{eq:s2}
  \pi_\L\Bigl(\Var_{B_2}(f)\Bigr)\le \g_{k-1}\sum_{x\in
    B_2}\pi_\L\Bigl(c_{x}^{B_2} \Var_x(f)\Bigr)\,.
\end{equation}
Notice that, by construction, $c_{x}^{B_2}(\s)=c_{x}^{\L}(\s)$ for
all $x\in B_2$ and all $\s$. Therefore the term $\sum_{x\in
  B_2}\pi_\L\bigl(c_{x}^{B_2} \Var_x(f)\bigr)$ is nothing but the
contribution carried by the set $B_2$ to the full Dirichlet form $\cD_\L(f)$.

Next we examine the more complicate term
$\pi_\L\bigl(c_1\Var_{B_1}(f)\bigr)$ with the goal in mind to bound
it with the missing term of the full Dirichlet form $\cD_\L(f)$.
Assume that $I=[a,b-1]$. For any configuration $\sigma$, define the
random variable $\xi$ as the the distance between  the rightmost
empty site in the strip $I$ and  the right boundary of $I$, namely
$$
\xi (\sigma) := \min_{x \in I: \sigma(x)=0} \{ b-x \}
$$
with the convention that $\min( \emptyset)=+ \infty$ (see Figure
\ref{fig:bisezione}). The indicator function $c_1$ guarantees that,
for any configuration $\sigma$ with $c_1(\sigma)=1$, $\xi(\sigma)
\in [1,b-a]$.
\begin{figure}[h]
\psfrag{l1}{ $B_1=\L\setminus \L_2$}
 \psfrag{l2}{$B_2=\L_2$}
 \psfrag{i}{$I$}
 \psfrag{xi}{$\xi$}
 \psfrag{bk}{$B_k$}
 \includegraphics[width=.90\columnwidth]{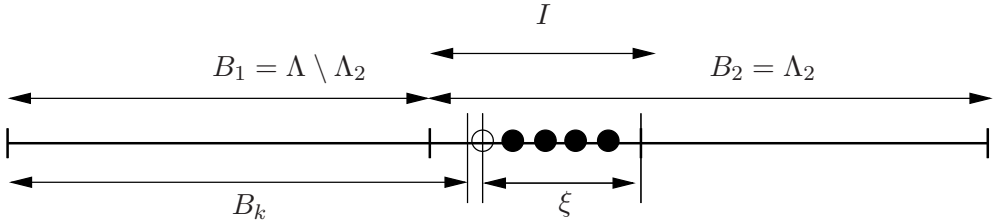}
\caption{The set $\Lambda$ divided into the blocks $B_1$ and $B_2$
and the random variable $\xi$. The configuration is such that
$c_1(\sigma)=1$.   Empty circles correspond to empty sites.}
\label{fig:bisezione}
 \end{figure}

Set for simplicity  $\ell :=b-a$ and decompose $c_1(\sigma)$
according to $\xi$ so that (observe that $\Var_{B_1}(f)$ depends
only on $\s_{B_2}$, such a dependence is understood below)
\begin{align}
 \pi_\Lambda \Bigl(c_1& \Var_{B_1}(f)\Bigr)  =
\sum_{k=1}^\ell \pi_\Lambda \Bigl(\mathds{1}_{\{\xi=k\}} \Var_{B_1}(f)\Bigr) \nonumber \\
& = \sum_{k=1}^\ell \sum_{\s_{B_2 \setminus I}} \pi_\Lambda (\s_{B_2
\setminus  I}) \sum_{\s_I} \pi_\Lambda (\s_I) \mathds{1}_{\{\xi=k\}}(\s_I)
\Var_{B_1}(f)
\nonumber \\
& = \sum_{k=1}^\ell \sum_{\s_{B_2 \setminus I}}
\pi_\Lambda(\s_{B_2\setminus I}) \sum_{\s_{I\setminus
I_k}}\pi_\Lambda(\s_{I\setminus I_k})\mathds{1}_{\{\xi=k\}}(\s_{I\setminus
I_k}) \sum_{\s_{I_k}}\pi_\Lambda(\s_{I_k})\Var_{B_1}(f) \label{A}
\end{align}
where $I_k:= [a,b-k-1]$. In the last identity    we used the
independence of $\mathds{1}_{\{\xi=k\}}$ from $\s_{I_k}$ (this comes from
the fact that $\xi$ is the rightmost empty site inside $I$, hence,
in order to decide that $\xi(\sigma)=k$, one has to know $\sigma(x)$
only for $x \in I \setminus I_k$).

Set $B_k=B_1 \cup I_k$ (see Figure \ref{fig:bisezione}). Then, the
convexity of the variance implies, for any $k$, that
\begin{equation}\label{superiore}
\sum_{\s_{I_k}}  \pi_\Lambda (\s_{I_k})\Var_{B_1}(f) \le
\Var_{B_k}(f) \,.
\end{equation}
 Then, the Poincar\'e inequality together with Proposition
\ref{prop:monotony} finally gives
\begin{align}
\Var_{B_k}(f)
& \le
\gap(\cL_{B_k})^{-1}\sum_{x\in B_k} \pi_{B_k}\bigl (c_{x}^{B_k} \Var_{x}(f) \bigr) \nonumber \\
& \le \gap( \cL_{\L_1})^{-1} \sum_{x\in
B_k}\pi_{B_k}\bigl(c_{x}^{B_k} \Var_{x}(f) \bigr) . \label{B}
\end{align}
Recall that $B_1= \L\setminus \L_2$, $B_2= \L_2$, $I= \L_1\cap
\L_2$.  The role of the event $\{\xi=k\}$ should at this point be
clear. Indeed, thanks to the empty site given by $\xi$, we have
\begin{equation} \label{C}
c_{x}^{B_k} (\s)\mathds{1}_{\{\xi=k\}}(\s) \leq c_{x}^{\Lambda}(\s)
\mathds{1}_{\{\xi=k\}}(\s)  \qquad \qquad \forall x \in B_k\,, \; \s \in
\O_\L\, .
\end{equation}
Let us come   back to \eqref{A}. Using \eqref{superiore}, \eqref{B},
\eqref{C} we conclude that the last member of \eqref{A} is bounded
from above by
\begin{multline*}
 \sum_{k=1}^\ell \sum_{\s_{B_2 \setminus I}}
\pi_\Lambda(\s_{B_2\setminus I}) \sum_{\s_{I\setminus
I_k}}\pi_\Lambda(\s_{I\setminus I_k})\mathds{1}_{\{\xi=k\}}(\s_{I\setminus
I_k})\gap( \cL_{\L_1})^{-1} \sum_{x\in B_k}\pi_{B_k}\bigl(c_{x}^{\L}
\Var_{x}(f) \bigr) = \\\gap(\cL_{\L_1})^{-1} \sum_{k=1}^\ell \pi_\L
\bigl( \mathds{1}_{\{\xi=k\}} \sum_{x\in B_k}c_{x}^{\L}\Var_{x}(f)\bigr)\,.
  \end{multline*}
Since $B_k=B_1 \cup I_k \subset B_1 \cup I = \L_1 \in \mathbb{F}
_{k-1}$, from the above bound and \eqref{A} we conclude that
\begin{align}
\pi_\L\Bigl(c_1\Var_{B_1}(f)\Bigr)
 \le
\g_{k-1}\,\pi_\L\Bigl(\sum_{x\in \L_1}c_{x}^{\L}\Var_{x}(f)\Bigr)
\,. \label{D}
\end{align}

\bigskip

In conclusion (cf. \eqref{eq:s1}, \eqref{eq:s2} and \eqref{D}) we
have shown that
\begin{equation*}
  \Var_\L(f)\le \frac{1}{1-\sqrt{\e_k}}\g_{k-1}\Bigl(\cD_\L(f)+\sum_{x\in
  \L_1\cap\L_2}\mu_\L\bigl(c_{x}^{\L}\Var_x(f)\bigr)\Bigr)\,.
\end{equation*}
Averaging over the $s_k$ possible choices of the sets $\L_1,\L_2$
gives (recall property (iii) at the beginning of the proof)
\begin{equation*}
  \Var_\L(f)\le \frac{1}{1-\sqrt{\e_k}}\g_{k-1}(1+\frac{1}{s_k})\cD_\L(f)
\end{equation*}
which implies that
\begin{align}
\g_k & \le \frac{1}{1-\sqrt{\e_k}}(1+\frac{1}{s_k})\g_{k-1}
\le
\g_{k_0}\ \prod_{j=k_0}^k  \frac{1}{1-\sqrt{\e_j}}(1+\frac{1}{s_j}) 
\g_{k_0-1}\ \prod_{j=k_0}^\infty \frac{1}{1-\sqrt{\e_j}} (
1+\frac{1}{s_j})\text{} \label{eq:east4}
\end{align}
where $k_0$ is the smallest integer such that $\d_{k_0}>1$.

\bigskip

By definition of the quantity $\e_k$ given in Proposition
\ref{gapblock} and by construction of the $\Lambda_{1,2}^{(i)}$'s,
 $|I| \geq \delta_k/8$, so that $\e_k \leq p^{\delta_k/8}$. The
convergence of the product in (\ref{eq:east4}) is thus guaranteed
and the positivity of the spectral gap follows.

Let us now discuss the asymptotic behavior of the gap as $q
\downarrow 0$. We first observe that $\g_{k_0-1} \leq \g_{k_0}<
(1/q)^{\a_\d}$ for some finite $\a_\d$. That follows {\it e.g.}\
from a coupling argument: in a time lag one and  with probability
larger than $q^{\a_\d}$ for suitable $\a_\d$, any configuration in
$\L_{k_0} \in \bbF_{k_0}$ can reach the empty configuration by just
flipping one after another the spins starting from the right
boundary. In other words, under the maximal coupling, two arbitrary
configurations will couple in a time lag one with probability larger
than $q^{\a_\d}$ \ie   $\g_{k_0}< (1/q)^{\a_\d}$. We now analyze the
infinite product (\ref{eq:east4}) which we rewrite as
$$
\prod_{j=k_0}^\infty\left(\frac{1}{1-\sqrt{\e_j}}\right)\,\prod_{j=k_0}^\infty\left(1+\frac{1}{s_j}\right).
$$
The second factor, due to the exponential growth of the scales, is
bounded by a constant independent of $q$.

To bound the first factor define
$j_*=\min\{j:\e_j\le \nep{-1}\}$ and observe that, for $q$ small enough
$$ \text{}
-2 + \frac{\log_2(1/q)}{1-3\d} \leq  j_* \leq  2 + \frac{\log_2(1/q)}{1-3\d} .
$$
Then write
\begin{gather} \label{eq:east1}
\prod_{j=k_0}^\infty\left(\frac{1}{1-\sqrt{\e_j}}\right)\le \prod_{j=1}^{j_*}\left(\frac{1+\sqrt{\e_j}}{1-\e_j}\right)
\prod_{j>j_*}^\infty\left(\frac{1}{1-\sqrt{\e_j}}\right)
\le \nep{C} \,2^{j_*}\,\prod_{j=1}^{j_*}\left(\frac{1}{1-\e_j}\right)
\end{gather}
where we used the bound $1/(1-\sqrt{\e_i})\le
1+\left(e/(e+1)\right)\sqrt{\e_j}$ valid for any $j\ge j_*$
together with
\begin{align*}
\sum_{j>j_*}^\infty\log\left(1+\frac{\nep{}}{\nep{}+1}\sqrt{\e_j}\right)
& \le
 \frac{\nep{}}{\nep{}+1}\sum_{j>j_*}^\infty\sqrt{\e_j} \\
& \le  \frac{\nep{}}{\nep{}+1} \int_{j_*-1}^\infty dx\
\exp(-q(2^{x(1-3\d)})/16) \text{}\\
& = A_\d\int_{2^{(j_*-1)(1-3\d)}}^\infty dz\ \exp(-q z/16)/z \\
& \le 16A_\d 2^{-(j_*-1)(1-3\d)}q^{-1}\exp(-q2^{(j_*-1)(1-3\d)}/16) \le C
\end{align*}
for some constant $C$ independent of $q$.

Observe now that $1-\e_j\ge 1-\nep{-q\d_j/8}\ge A q\d_j$ for any $j\le j_*$
and some constant $A$ independent of $q$. Thus the r.h.s.\ of \eqref{eq:east1}
is bounded from above by
\begin{gather*}
  e^C\,(\frac{2}{Aq})^{j_*}\,\prod_{j=1}^{j_*}\d_j^{-1}\le
  \frac{1}{q^a}\,(1/q)^{j_*}\,2^{-(1-3\d)j_*^2/2}
\leq \frac{1}{q^{a'}}\,(1/q)^{\log_2(1/q)/(2-6\d)}
\end{gather*}
for some constants $a, a'$ (independent of $q$).
This ends the proof of Theorem \ref{th:cmrt-gap}.
 \end{proof}

\begin{proof}[Proof of Theorem \ref{th:cmst-gap}]
Since it is always true that $\gap(\cL) \leq 1$ (take $f= \s_0$ in
\eqref{def_gap}) we can assume without loss of generality that $q$
is small. Then, thanks to Proposition \ref{prop:monotony},
$\gap(\cL) \leq \gap(\cL_{\L})$ with $\L=[0,\ell)$ and $\ell=1/q$
that we assume, for simplicity, to be an integer. In order to bound
from above $\gap(\cL_{\L})$, we will make use of the following
general result.

\begin{Lemma}\label{prop:amine}
For any $A \subset \Omega_\L$, the hitting time $\tau_A = \inf  \{ t \geq 0 : \sigma_t^\Lambda \in A\}$ satisfies
$$
\bbP^\L_{\pi_\L} (\tau_A >t) \leq e^{-t \gap(\cL_\L) \pi_\L(A)}\,.
$$
\end{Lemma}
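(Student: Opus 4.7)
The plan is to realize $\bbP^\L_\sigma(\tau_A > t)$ as the action of the sub-Markovian semigroup obtained from $\cL_\L$ by killing upon hitting $A$, and then to bound its spectral radius in $\bbL^2(\pi_\L)$ through the variational characterization of its smallest eigenvalue. First, I would set $u_t(\sigma):=\bbP^\L_\sigma(\tau_A > t)$. By the Markov property $u_t$ vanishes on $A$, has initial datum $u_0=\mathds{1}_{A^c}$, and satisfies $\partial_t u_t=\cL_\L u_t$ on $A^c$; equivalently, $u_t=e^{t\cL_\L^{A^c}}\mathds{1}_{A^c}$, where $\cL_\L^{A^c}$ denotes the restriction of $\cL_\L$ to the subspace $H:=\{f\in\bbL^2(\pi_\L):f|_A=0\}$. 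Self-adjointness of $\cL_\L$ with respect to $\pi_\L$ (which reduces to symmetry of the Dirichlet form) transfers to $\cL_\L^{A^c}$ on $H$, so $-\cL_\L^{A^c}$ is non-negative on $H$ with smallest eigenvalue
$$
\lambda_A \;=\; \inf_{f \in H \setminus \{0\}} \frac{\cD_\L(f)}{\pi_\L(f^2)}.
$$

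Second, by the spectral theorem $\|e^{t\cL_\L^{A^c}}g\|_{\bbL^2(\pi_\L)} \leq e^{-t\lambda_A}\|g\|_{\bbL^2(\pi_\L)}$ for every $g\in H$; combined with self-adjointness this gives
$$
\bbP^\L_{\pi_\L}(\tau_A>t) \;=\; \langle\mathds{1}_{A^c},\, e^{t\cL_\L^{A^c}}\mathds{1}_{A^c}\rangle_{\pi_\L} \;\leq\; e^{-t\lambda_A}\,\pi_\L(A^c) \;\leq\; e^{-t\lambda_A}.
$$
It then suffices to show that $\lambda_A \geq \gap(\cL_\L)\,\pi_\L(A)$.

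For this last step, which is the only one requiring a computation, I would proceed as follows. For any $f\in H$ the constraint $f|_A=0$ together with Cauchy--Schwarz gives $\pi_\L(f)^2=\pi_\L(f\mathds{1}_{A^c})^2 \leq \pi_\L(A^c)\,\pi_\L(f^2)$, so that
$$
\var_\L(f) \;=\; \pi_\L(f^2)-\pi_\L(f)^2 \;\geq\; \pi_\L(A)\,\pi_\L(f^2).
$$
Inserting this into the Poincaré inequality $\cD_\L(f)\geq \gap(\cL_\L)\,\var_\L(f)$ and then into the variational formula for $\lambda_A$ yields the desired bound. No real obstacle is expected: the only subtle point is the rigorous identification of $u_t$ with the semigroup of the Dirichlet restriction $\cL_\L^{A^c}$, but in the present finite-state setting this is standard (and can even be checked by hand using $e^{t\cL_\L^{A^c}}=\lim_{n\to\infty}(I+\tfrac{t}{n}\cL_\L P_{A^c})^n$ where $P_{A^c}$ is the projection onto $H$).
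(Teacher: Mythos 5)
Your proof is correct and follows essentially the same route as the paper: both reduce the statement to the standard bound $\bbP^\L_{\pi_\L}(\tau_A>t)\le e^{-t\lambda_A}$ for the Dirichlet eigenvalue $\lambda_A$ (which the paper simply cites, while you derive it from the killed semigroup and the spectral theorem) and then show $\lambda_A\ge \gap(\cL_\L)\,\pi_\L(A)$ via the Poincar\'e inequality together with the bound $\var_\L(f)\ge \pi_\L(A)\,\pi_\L(f^2)$ for $f$ vanishing on $A$. The only cosmetic difference is that you obtain this last variance bound from Cauchy--Schwarz applied to $\pi_\L(f)^2=\pi_\L(f\mathds{1}_{A^c})^2$, whereas the paper restricts the double-sum representation of the variance to pairs with one configuration in $A$; both are valid.
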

\begin{proof}
It is well known (see e.g.\ \cite{A}) that $\bbP^\L_{\pi_\L} (\tau_A
>t) \leq e^{-t \l_A}$, where
$$
\lambda_A := \inf \left\{ \cD_\L(f) : \pi_\L(f^2)=1 \mbox{ and } f
\equiv 0 \mbox{ on } A \right\}\,.
$$
If $f \equiv 0$ on $A$   we can bound \begin{equation}
\begin{split} \Var _\L (f) & = \frac{1}{2} \sum _{\s \in \O_\L}
\sum _{\s' \in \O_\L} \p_\L(\s) \p_\L (\s') \bigl( f(\s)-f(\s')
\bigr)^2\\& \ge \sum_{\s \in A}\sum_{\s' \in \O_\L}\p_\L(\s) \p_\L
(\s')\bigl( f(\s)-f(\s') \bigr)^2= \p_\L(A) \p_\L (f^2 )\,.
\end{split}
\end{equation}
Hence, for such a function $f$ it holds
$$ \cD_\L(f) \ge \gap(\cL_\L) \Var _\L (f) \geq \gap(\cL_\L)\p_\L(A) \p_\L (f^2
)\,.$$ It  then follows  that $\lambda_A \geq \gap (\cL_\L)
\pi_\L(A)$.
\end{proof}

Denote by $\tau$ the first time there are $n:=\lfloor \log_2 \ell
\rfloor $ empty sites in $[0,\ell)$, and by $\tau_0$ the first time
there is
  an  empty site  at the origin. Thanks to Point
$(i)$ of Proposition \ref{prop:comb}   and since $2^n\leq \ell \leq
2^{n+1}-1$, starting from the filled configuration
 in order to
end up at time $\tau_0$ with an empty site located at the origin,
the system must have created before   $n+1\geq  n$ empty sites in
$[0,\ell)$. Hence $\tau \leq \tau_0$ when starting from
$\mathds{1}$. In turn, Lemma \ref{prop:amine} applied to the set
$A=\{\eta: \eta_0=0\}$ implies that
\begin{gather}\label{torta}
e^{-t \gap(\cL_\L) q}= e^{-t \gap(\cL_\L) \pi_\L(A)} \geq
\bbP^\L_{\pi_\L} (\tau_0 >t) \geq \pi_L(\mathds{1} )
\bbP^\L_{\mathds{1}} (\tau
>t)\,.
\end{gather}
Recall the definition of $V(n)$ introduced in Section \ref{sec:comb}
and denote by $\Omega_n \subset V(n)$ those configurations with
exactly $n$ zeros. Thanks to Point $(ii)$ of Proposition
\ref{prop:comb}, for a suitable constant $c \leq 0.7$ it holds
\begin{equation}\label{piccolini}
\begin{split} \pi_L (\mathds{1} ) \bbP^\L_{\mathds{1} } (\tau \leq t)
& \leq
\bbP^\L_{\pi_\L} (\tau \leq t) \leq \bbP^\L _{\pi_\L} (\exists s \leq
t\,:\, \eta_s \in \O_n) 
  \leq  (t/q) \pi_\L(\O_n) \\
  & \leq  (t/q) q^{n}  |\O_n|\leq (t/q) q^n
2^{\genfrac{(}{)}{0pt}{}{n}{2}} n!c^n\leq t q^{(n/2)(1+o(1)) }\,.
\end{split}
\end{equation}
 Here $o(1)$ tends to zero as $n$
goes to infinity and therefore when $q\downarrow 0$. To prove the
third inequality in \eqref{piccolini}, observe that we can write
$\eta_t$ as time change of a discrete time Markov chain. More
precisely, it holds  $ \eta_t= \eta^{d.t.}_{N_t}$ where
$\eta^{d.t.}$ is the discrete time Markov chain on $\O_\L$ whose
transition matrix $\bbP$ satisfies $\bbI- \bbP= |\L|^{-1}\cL _\L$,
and where $(N_t)_{t\geq 0}$ represents a Poisson process with mean $\bbE(
N(t))= |\L| t$. Trivially, $\pi_\L$ is reversible for $\eta^{d.t.}$.
We write $P$ for the law of $\eta ^{d.t.}$ having $\pi_\L$ as
initial distribution. Then 
\begin{align*}
\bbP^\L _{\pi_\L} (\exists s \leq t\,:\, \eta_s \in \O_n)
& =  
\bbP^\L _{\pi_\L} (\exists s < t\,:\, \eta_s \in \O_n)\\
& =  
\sum _{k=0}^\infty \bbP( N(t)=k) P_{\pi_\L}( \exists j: 0\leq j < k \text{ and }\eta^{d.t.}_j\in \O_n)\\ 
& \leq
\sum _{k=0}^\infty \bbP( N(t)=k) k \pi_\L(\O_n) = \bbE(
N(t) ) \pi_\L(\O_n)= (t/q) \pi_\L (\O_n)\,.
\end{align*}
 The above
inequality \eqref{piccolini} together with \eqref{torta} implies
that
\begin{equation}\label{tortabis}
e^{-t \gap(\cL_\L) q} \geq \pi_\L (\mathds{1}) -t q^{(n/2)(1+o(1))
}\,. \end{equation} Since  $\pi_L(\mathds{1})=(1-q)^{1/q}\sim 1$ ,
we choose $t=q^{-(n/2)(1-\frac{\delta}{2})}$ so that the above
r.h.s.\ is at least $1/2$ for $q$ small enough.  
Finally, we conclude from
\eqref{tortabis} that for $q$ small, $ t\gap(\cL_\L) q \leq \log 2$.
The expected result follows.
\end{proof}

We end this section with the proof of Proposition \ref{gapblock}.

\begin{proof}[Proof of Proposition \ref{gapblock}]
For any mean zero function $f\in L^2(\O_\L,\pi_\L)$ let
$$
\pi_1f:=\pi_{B_2}(f), \quad \pi_2f:=\pi_{B_1}(f)
$$
be the natural projections onto $L^2(\O_{B_i},\pi_{B_i})$,
$i=1,2$. Obviously $\pi_1\pi_2f=\pi_2\pi_1f=0$. The generator of the block dynamics can then be
written as:
$$
\cL_{\rm block}f=c_1\bigl(\pi_2f -f \bigr) +\pi_1f -f
$$
and the associated eigenvalue equation as
\begin{equation}
  \label{eq:4bis}
c_1\bigl(\pi_2f -f \bigr) +\pi_1f -f=\l f.
\end{equation}
By taking $f(\s_\L)=g(\s_{B_2})$  with $g$ non zero and with $\p_1
g=0$, we see that $\l=-1$ is an eigenvalue. Moreover, since $c_1\le
1$, $\l\ge -1$. Assume now $0>\l> -1$ and apply $\pi_2$ to both
sides of \eqref{eq:4bis} to obtain (recall that $c_1=c_1(\s_{B_2})$)
\begin{equation}
  \label{eq:5}
  -\pi_2f=\l\pi_2f \quad \imp \quad \pi_2f=0
\end{equation}
For any $f$ with $\pi_2f=0$ the eigenvalue equation becomes
\begin{equation}
  \label{eq:6}
  f=\frac{\pi_1 f}{1+\l+c_1}
\end{equation}
and that is possible only if
$$
1=\pi_1(\frac{1}{1+\l+c_1})= \frac{1}{1+\l} \p_{B_2}(c_1=0) +
\frac{1}{2+\l} \p_{B_2}(c_1=1)\,.
$$
We can  solve the equation to get
$$
\l=-1+\sqrt{1-\pi_{B_2}(c_1)}\le -1+\sqrt{\e_k}\,.
$$\qedhere \end{proof}


\section{Persistence function: proof of Theorem \ref{th:persistence}} \label{sec:persistence}

We follow \cite{CMRT}. Observe first that $F(t)=F_1(t)+F_0(t)$ where
$$
F_1(t)= \int\, d\pi(\h)\, \bbP_\h (\s_0(s)=1\ \text{for all $s\le
t$})
$$
and similarly for $F_0(t)$. We will prove the exponential decay of
$F_1(t)$, the case of $F_0(t)$ being similar.

For any $\l>0$ the exponential Chebychev inequality gives
$$
F_1(t)= \int\, d\pi(\h)\, \bbP_\h \Bigl(\int_0^t ds\,
\s_0(s)=t\Bigr) \le \nep{-\l t}\ \bbE_\p\bigl(\nep{\l\int_0^t ds
\,\s_0(s)}\bigr)
$$
where we recall that $\bbE_\pi$ denotes the expectation over the
process started from the equilibrium distribution $\pi$. Consider
the self-adjoint operator $H_\l:=\cL + \l V$, on $L^2(\pi)$, where
$V$ is the multiplication operator by $\s_0$. By the very definition
of the scalar product $\scalprod{f}{g}$ in $L^2(\pi)$ and the
Feynman-Kac formula, we can rewrite $\bbE_\pi(\nep{\l\int_0^t
\s_0(s)})$ as $\scalprod{{\bf 1}}{\nep{tH_\l}{\bf 1}}$. Thus, if
$\b_\l$ denotes the supremum of the spectrum of $H_\l$,
$\bbE_\pi(\nep{\l\int_0^t \s_0(s)})\leq \nep{t\b_\l}$. In turn,
$$
F_1(t) \leq e^{-\lambda t (1-\frac{\beta_\lambda}{\lambda})} .
$$
Hence, in order to complete the proof we need to show that for suitable
positive $\l$ the constant $\b_\l/\l$ is strictly smaller than one.

For any function $f$, with $\| f\|_{\mathbb{L}^2(\pi)}=1$, in the domain of $H_\l$ (which coincides
with Dom($\cL$)), write $f=\a {\bf 1} +g$ with
$\scalprod{{\bf 1}}{g}=0$. Thus, by Cauchy-Schwarz inequality and the fact that $\mathcal{L}{\bf 1} =0$ and that  $|\sigma_0| \leq 1$, we have
\begin{align*}
\scalprod{f}{H_\l f}
& =
\scalprod{g}{\cL g}  + \a^2 \l\scalprod{{\bf 1}}{V {\bf 1}}  + \l\scalprod{g}{Vg} + 2\l\a\scalprod{{\bf 1}}{Vg}  \\
& \le (\l-\gap(\cL)) \scalprod{g}{g} +\a^2\l p
+2\l|\a|\bigl(\scalprod{g}{g}pq\bigr)^{1/2} .\text{}
\end{align*}
 Since
$\| f\|_{\mathbb{L}^2(\pi)}=1$, $\a^2+\scalprod{g}{g}=1$ and
\begin{equation}
  \frac{\b_\l}{\l} \le \sup_{0\le \a\le 1}\Bigl\{(1-\frac{\gap(\cL)}{\l})(1-\a^2)+p\a^2+2\a\bigl((1-\a^2)pq\bigr)^{1/2}\Bigr\}
\label{eq:pers2}
\end{equation}
If we choose $\l=\gap(\cL)/2$ the r.h.s.\ of \eqref{eq:pers2}
becomes
\begin{align*}
& \sup_{0\le \a\le 1} \left\{ (1+p)\a^2 -1 + 2\a\bigl((1-\a^2)pq\bigr)^{1/2} \right\} \\
& \qquad \qquad \le \sup_{0\le \a \le 1} \left\{(1+p)\a^2 -1 +
2\bigl((1-\a^2)pq\bigr)^{1/2} \right\}= \frac{pq}{1+p}+p <1
\end{align*}
since $p\neq 1$.
Thus $F_1(t)$ satisfies
$$
F_1(t)\le \nep{-t\frac{\gap(\cL)}{2} \frac{q}{1+p}}.
$$
A very similar computation (details are left to the reader) shows that
$$
F_0(t)\le \nep{-t\frac{\gap(\cL)}{2} \frac{p}{1+p}} .
$$
This ends the proof of Theorem \ref{th:persistence}.


\section{Log-Sobolev constant: proof of Theorem \ref{th:ls}} \label{sec:ls}

The aim of this section is to prove Theorem \ref{th:ls}.

Fix a finite interval $\Lambda$ and assume for simplicity that $\Lambda=[1,L-1]$.

\medskip

We start with the easy part, namely the upper bound on $C_\alpha(\Lambda)$.
We observe first that, thanks to \cite[Theorem 1.8]{mossel}, the $\alpha$-log-Sobolev inequality with constant
$C_\alpha(\Lambda)$ implies any $\beta$-log-Sobolev inequality with the same constant
$C_\alpha(\Lambda)$ as soon as $\beta \leq \alpha$. Hence, we have
$C_\alpha(\Lambda) \leq C_2(\Lambda)$ for any $\alpha \in [0,2]$ so that we only have to prove that
$C_2(\Lambda) \leq cL$. This in turn is implied by the following well-known property of Diaconis and
Saloff-Coste \cite[Corollary A.4]{diaconis-saloff-coste}:
$$
C_2(\Lambda) \leq \frac{1}{\gap(\cL_\Lambda)}\frac{\log\frac{1}{\pi_\Lambda^*} -1}{1-2\pi_\Lambda^*}
$$
where $\pi_\Lambda^*:=\min_{\sigma \in \Omega_\Lambda} \pi_\Lambda(\sigma)=\min(p,q)^{L-1}$.
The expected upper bound follows at once from Theorem \ref{th:gap}.

\medskip

Now we turn to the lower bound on $C_\alpha(\Lambda)$. The result will be achieved by a test function. Define the
random variable $\xi \in \{1,\dots,L\}$ as the the distance from the leftmost empty site in  $\Lambda$ to the left boundary of $\Lambda$, namely
(recall that $\Lambda=[1,L-1]$),
$$
\xi (\sigma) := \min_{x \in \Lambda: \sigma(x)=0} \{ d(x,0) \}
$$
with the convention that $\min \emptyset = L$. In other words, $\xi$ is the position, in $\{1,\dots,L\}$, of the leftmost empty site (including the boundary condition at site $L$).
Then, for $g \colon \{1,\dots,L\} \to \bbR$, let $f\colon \Omega_\Lambda \to \bbR$
defined as $f(\sigma)=g(\xi(\sigma))$. Define also the distribution $m$ on
the set $\{1,\dots,L\}$ by  $m(k)=\pi_\Lambda(\{\xi=k\})$ so that
$$
m(k) =
\begin{cases}
q p^{k-1} & \mbox{if } k=1,2\dots, L-1, \\
p^{L-1} & \mbox{if } k=L .
\end{cases}
$$
Hence, the $\alpha$-log-Sobolev inequality \eqref{ls}, applied to $f$, reads
\begin{equation}\label{eq:bd}
m(g \log (g/m(g))) = \ent_{\Lambda}(f)
\leq
\frac{\alpha \alpha' C_\alpha(\Lambda)}{4}
\mathcal{D}_\Lambda (g(\xi)^{1/\alpha},g(\xi)^{1/\alpha'}) .
\end{equation}
Let us analyze the right hand side of the latter.
By definition of the Dirichlet form, we have
\begin{align*}
\mathcal{D}_\Lambda (g(\xi)^{1/\alpha},g(\xi)^{1/\alpha'}) & =
\sum_{k=1}^{L}  \sum_{\sigma : \xi(\sigma) = k} \pi_\Lambda (\sigma)
\sum_{x=1}^{L}  c_x^\Lambda (\sigma) [(1-\sigma(x))p + \sigma(x)q] \times \\
& \phantom{AAA} \times (g(\xi(\sigma^x))^{1/\alpha} - g(\xi(\sigma))^{1/\alpha})
(g(\xi(\sigma^x))^{1/\alpha'} - g(\xi(\sigma))^{1/\alpha'}) \\
& = m(L-1) p (g(L)^{1/\alpha} - g(L-1)^{1/\alpha})(g(L)^{1/\alpha'} - g(L-1)^{1/\alpha'}) \\
& \quad
+\sum_{k=1}^{L-2} m(k) pq
(g(k+1)^{1/\alpha} - g(k)^{1/\alpha})(g(k+1)^{1/\alpha'} - g(k)^{1/\alpha'}) \\
& \quad
+\sum_{k=2}^{L} m(k) q (g(k-1)^{1/\alpha} - g(k)^{1/\alpha})(g(k-1)^{1/\alpha'} - g(k)^{1/\alpha'}),
\end{align*}
where we used that, given that $\xi=k$, the only possible flips are at site $x=k-1$ (for $k \geq 2$), and at site $x=k$, in which case the flip is admissible only if site  $x+1$ is empty (hence the extra factor $q$), except for $\xi=L-1$ where by definition the constraint is always satisfied, due to the boundary condition at site $x=L$. By a change of variable, and using that
$m(k)pq=m(k+1)q$, we arrive at
\begin{align*}
\mathcal{D}_\Lambda (g(\xi)^{1/\alpha}, & g(\xi)^{1/\alpha'})
 =
2 \sum_{k=1}^{L-2} m(k) pq
(g(k+1)^{1/\alpha} - g(k)^{1/\alpha})(g(k+1)^{1/\alpha'} - g(k)^{1/\alpha'}) \\
& + m(L-1) p (1+q) (g(L)^{1/\alpha} - g(L-1)^{1/\alpha})(g(L)^{1/\alpha'} - g(L-1)^{1/\alpha'}) . \\
\end{align*}
Observe that the latter corresponds to the Dirichlet form associated to the birth and death process on $\{1,\dots,L\}$
with reversible measure $m$ and transition rates $p(k,k+1)=2pq$ for $k=1,\dots,L-2$, and $p(L-1,L)=p(1+q)$
(and $p(k+1,k)$ computed so that the detailed balanced condition $m(k)p(k,k+1)=m(k+1)p(k+1,k)$ holds).
In turn, Inequality \eqref{eq:bd} is nothing but the $\alpha$-Sobolev inequality corresponding to this birth and death process
\footnote{In fact it is possible to prove that the $\alpha$-Sobolev constant associated to this birth and death process compares to
$L$ for any $\alpha \in (0,2]$ and that the Poincar\'e inequality ({\it i.e.}\ the case $\alpha=0$) holds with a constant $C_0$ independent of $L$.}.

Consider now the special choice $g(k)= \lambda^{k-1}$, defined on $\{1,\dots,L\}$, with $\lambda \in (0,1/p)$ a parameter that will be chosen later. The above Dirichlet form reduces, after simple algebra, to
\begin{align*}
\mathcal{D}_\Lambda (g(\xi)^{1/\alpha},  g(\xi)^{1/\alpha'})
& = (\lambda^\frac{1}{\alpha}-1)(\lambda^\frac{1}{\alpha'}-1)\left( q(1+q)(p \lambda)^{L-1}
 +  2q^2 \sum_{k=1}^{L-2} (p\lambda)^k \right) \\
& =
(\lambda^\frac{1}{\alpha}-1)(\lambda^\frac{1}{\alpha'}-1) \frac{pq}{1-p\lambda}
\left[p(1-(1+q)\lambda)(p\lambda)^{L-2}  + 2q \right] .
\end{align*}
Denote by $\e = 1-p\lambda \in (0,1)$. By a Taylor expansion, as $\e$ goes to 0, we thus have
\begin{equation} \label{epsilon1}
\mathcal{D}_\Lambda (g(\xi)^{1/\alpha},  g(\xi)^{1/\alpha'})
=
\left[\frac{1}{p^\frac{1}{\alpha}}-1\right]\left[\frac{1}{p^\frac{1}{\alpha'}}-1\right]
pq(2qL + 1 +q +o(1)) .
\end{equation}
On the other hand,
\begin{align*} 
m(g)
& =
\sum_{k=1}^L m(k) g(k) = (p \lambda)^{L-1} + q  \sum_{k=1}^{L-1} (p \lambda)^{k-1}
=
\frac{(p \lambda)^{L-1} p(1-\lambda) + q}{1-p\lambda} \\
& =
1 + q(L-1) + o(1) \nonumber.
\end{align*}
Hence,
$$
\frac{d}{d \log \lambda}m(g) = \frac{pq\lambda + (p \lambda)^{L-1}[p(1-\lambda)(1-p\lambda)(L-1)-pq\lambda]}{(1-p\lambda)^2} .
$$
Since $\frac{d}{d \log \lambda}m(g) = \sum_{k=1}^L m(k)kg(k)$, we deduce that
\begin{align*}
m(g \log g)
& =
\log \lambda \sum_{k=1}^L m(k)kg(k)
=
\log \lambda \frac{pq\lambda + (p \lambda)^{L-1}[p(1-\lambda)(1-p\lambda)(L-1)-pq\lambda]}{(1-p\lambda)^2} \\
& =
\frac{1}{2}(L-1)(qL+2p)\log(1/p) + o(1).
\end{align*}
In turn, there exists a constant $c$, that may depend on $q$ but that is independent of $L$ and $\alpha$, such that
\begin{align*}
m(g \log (g/m(g))) = m( g \log g) - m(g) \log m(g) \geq cL^2 + o(1) .
\end{align*}
The expected result finally follows, using \eqref{epsilon1}, from \eqref{eq:bd}, in the limit $\e \to 0$.
This achieves the proof of the lower bound on the constants $C_\alpha(\Lambda)$ and therefore of Theorem \ref{th:ls}.


\section{Out of equilibrium I, long time behavior: proof of Theorem \ref{th:outI}} \label{sec:outI}

The aim of this section is to prove Theorem \ref{th:outI}. To  that
purpose, we will deeply use the oriented character of the East
process. We need some preparation.

In \cite{Aldous}, Aldous and Diaconis introduced the following notion of \emph{distinguished zero}:

\begin{definition}[Distinguished zero \cite{Aldous}] \label{def:distinguished}
Fix an initial configuration $\sigma \in \Omega$, suppose that
$\s(x)=0$ and call the site $x$ distinguished.  Then, setting
$\xi_0(\s)=x$,  the position $\xi_s=\xi_s(\sigma) \in \mathbb{Z}$ of
the distinguished zero at time $s >0$ obeys the following iterative
rule.  $\xi_s=x$ for all times $s$ strictly smaller than the first
legal ring (recall the graphical construction of Section
\ref{graphical}) of the mean one Poisson clock associated to site
$x$ when it jumps to site $x+1$. Then it waits the next legal ring
at $x+1$ and when this occurs it jumps to $x+2$, and so on.
\end{definition}

Thus, with probability one, the path $\{ \xi_s \}_{s \in [0,t]}$ is right-continuous, piecewise
constant, non decreasing, with at most a finite number of discontinuities at which it increases by (exactly) one.
 See Figure \ref{fig:distinguished} where $t_1, \dots ,t_4$ are legal rings. Also note that, by definition of the legal rings, necessarily $\sigma_s(\xi_s)=0$ for all $s$ (hence the name distinguished zero).

\begin{remark} \label{berlufuori}
One important feature of the distinguished zero is the following property that we will often use in the sequel.
Fix a starting configuration $\sigma$ with $\sigma(b)=0$. Make $b$ distinguished. Then, given the path $\{\xi_s\}_{s \in [0,t]}$,
 the law of ${\sigma_t}_{[a,\xi_t)}$  (\ie the restriction of $\s_t$ to $[a,\xi_t) $) depends only on $\sigma_{[a,b)}$
and not on $\sigma_{[a,b)^c}$.
\end{remark}

In the following we use the standard  notation $f(u-)=\lim_{\e \downarrow 0} f(u-\e)$.

By exploiting the oriented character of the East process, and more precisely the fact that the motion of the distinguished zero for
$s>t$ cannot be influenced by the clock rings and coin tosses in $(-\infty,\xi_t)$, Aldous and Diaconis proved the following important fact: if one starts with the equilibrium measure $\pi$, say on $(-\infty,x)$, with $x$ distinguished, then the process is still
at equilibrium, at any time, on $(-\infty,\xi_t)$. More precisely:

\begin{Lemma}[\cite{Aldous}] \label{lem:distinguished}
 Consider the East process on $\bbZ$. Fix an interval $\Lambda
= [a,b)$ with possibly $a=-\infty$ (in which case
$\Lambda=(-\infty,b)$). Assume that, at time zero,  $\sigma(b)=0$
while $\sigma_\Lambda$ is distributed according to the equilibrium
measure $\pi_\Lambda$. Make $b$ distinguished and call $\xi_s$ its
position at time $s$. Then, the conditional distribution of
${\sigma_t}_{[a,\xi_t)}$ ({\it i.e.}\ $\sigma_t$ restricted to
$[a,\xi_t)$) given the path $\{ \xi_s\}_{s \leq t}$ is the
equilibrium measure $\pi_{[a,\xi_t)}$.
\end{Lemma}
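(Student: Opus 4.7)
The plan is to exploit the oriented character of the East process encoded in the graphical construction of Section~\ref{graphical}, together with the invariance of $\pi_{[a,c)}$ under the finite volume East dynamics on $[a,c)$ with frozen zero at $c$.

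The key preliminary observation is that, since $c_x$ depends only on $\sigma(x+1)$, the dynamics at sites $\geq b$ is self-contained in the graphical construction: the full trajectory $\{\sigma_s(y):\, y\geq b,\, s\leq t\}$, and in particular the distinguished zero path $\{\xi_s\}_{s\leq t}$, is a measurable function of the initial data $\sigma_{[b,\infty)}$ (which satisfies $\sigma(b)=0$) and of the clocks and coins $\{t_{y,n},s_{y,n}\}_{y\geq b,\, n\geq 1}$ alone. By the mutual independence of all ingredients of the graphical construction, $\{\xi_s\}_{s\leq t}$ is therefore independent both of $\sigma_\Lambda$ and of the clocks/coins at sites $y<b$.

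Given a realization of the path, denote by $0=\tau_0<\tau_1<\tau_2<\cdots$ its successive jump times, so that $\xi_s=b+k$ on $[\tau_k,\tau_{k+1})$, and set $N:=\max\{k:\tau_k\leq t\}$, so that $\xi_t=b+N$. The plan is to prove by induction on $k\in\{0,1,\dots,N\}$ that, conditionally on $\{\xi_s\}_{s\leq t}$, $\sigma_{\tau_k}$ restricted to $[a,b+k)$ is distributed as $\pi_{[a,b+k)}$. The base $k=0$ follows from the hypothesis $\sigma_\Lambda\sim\pi_\Lambda$ together with the above independence. For the inductive step, note that on $[\tau_k,\tau_{k+1})$ one has $\sigma_s(b+k)=0$ by the very definition of the distinguished zero, so $\sigma_s$ restricted to $[a,b+k)$ evolves exactly as the finite volume East process on $[a,b+k)$ with frozen zero at $b+k$, driven by the clocks and coins at sites $y<b+k$ during this interval. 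Those at $y<b$ are independent of the path by the opening observation; those at $y=b+j$ with $j<k$, used on $[\tau_k,\tau_{k+1})\subset(\tau_{j+1},\infty)$, are likewise independent of the path by the strong Markov property of the underlying Poisson processes, since the path is insensitive to rings at $b+j$ after $\tau_{j+1}$. Consequently the conditional evolution on $[a,b+k)$ is genuinely the East process with frozen zero at $b+k$, which preserves $\pi_{[a,b+k)}$ by reversibility, whence $\sigma_{\tau_{k+1}-}$ restricted to $[a,b+k)$ is distributed as $\pi_{[a,b+k)}$. At time $\tau_{k+1}$ a legal ring fires at site $b+k$, refreshing $\sigma(b+k)$ to the coin $s_{b+k,\cdot}\sim\pi_{b+k}$, which is independent of both the path and of $\sigma_{\tau_{k+1}-}$ restricted to $[a,b+k)$. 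Hence $\sigma_{\tau_{k+1}}$ restricted to $[a,b+k+1)$ is distributed as $\pi_{[a,b+k)}\otimes\pi_{b+k}=\pi_{[a,b+k+1)}$, closing the induction.

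Applying the same invariance once more on the final time interval $[\tau_N,t]$ (on which $\xi_s\equiv b+N$, so that the dynamics on $[a,b+N)$ is again the East process with frozen zero at $b+N$) yields $\sigma_t$ restricted to $[a,b+N)$ distributed as $\pi_{[a,b+N)}=\pi_{[a,\xi_t)}$, which is the claim. I expect the main technical difficulty to be precisely the independence bookkeeping underlying the inductive step: one has to check carefully that conditioning on the \emph{entire} path $\{\xi_s\}_{s\leq t}$ — including the tacit information that no legal ring at $b+k$ occurs in $(\tau_k,\tau_{k+1})$ and that none at $b+N$ occurs in $(\tau_N,t]$ — consumes only randomness at or to the right of the current position of $\xi$, and hence does not perturb the dynamics on $[a,b+k)$ driven by the independent clocks/coins strictly to its left. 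The half-line case $a=-\infty$ is handled identically, using that $\pi_{(-\infty,c)}$ is invariant for the East process on $(-\infty,c)$ with frozen zero at $c$ (one may also pass to the limit $a\downarrow-\infty$ in the finite-$a$ statement by a standard projective argument).
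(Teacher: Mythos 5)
Your proof is correct and follows essentially the same route as the paper's: induct on the successive jump times of the distinguished zero (equivalently, on the interval $[\tau_k,\tau_{k+1})$ containing $t$), using stationarity of $\pi_{[a,b+k)}$ for the finite-volume East dynamics between jumps, and the fact that the Bernoulli$(1-q)$ coin read at the legal ring at site $b+k$ at time $\tau_{k+1}$ is independent of the path and of the configuration on $[a,b+k)$, so that it extends the product law by one site. The paper's write-up is terser and leaves the independence bookkeeping (which you correctly flag as the delicate point) implicit, but the argument is the same.
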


\begin{figure}[ht]
\psfrag{a}{ $a$}
 \psfrag{b}{$b$}
 \psfrag{t1}{$t_1$}
 \psfrag{t2}{$t_2$}
 \psfrag{t3}{$t_3$}
 \psfrag{t4}{$t_4$}
 \psfrag{t0}{$t=0$}
 \psfrag{t}{$t$}
 \psfrag{z}{$\bbZ$}
 \psfrag{xit}{$\xi_t$}
 \psfrag{pi}{$\pi_\Lambda$}
 \psfrag{pit}{$\pi_{[a,\xi_t)}$}
 \includegraphics[width=.80\columnwidth]{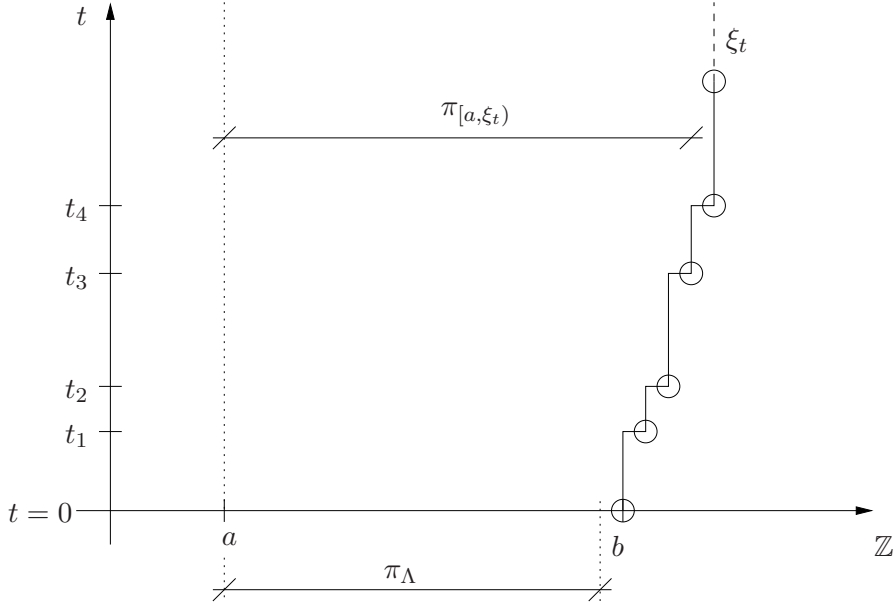}
\caption{The path $\{\xi_s\}_{s \geq 0}$ of the distinguished zero
$b$.  Illustration of Lemma \ref{lem:distinguished} on the interval
$\Lambda=[a,b)$.} \label{fig:distinguished}
 \end{figure}

We prove Lemma \ref{lem:distinguished} for completeness.

\begin{proof}[Proof of Lemma \ref{lem:distinguished}]
Let $0 < t_1 < t_2 < \dots$ be the (random) times when the
distinguished zero jumps, {\it i.e.}\ increases by one. Setting
$t_0=0$,   at time $s \in [t_i,t_{i+1})$ the position of the
distinguished zero is precisely $\xi_s = b+i$ (see Figure
\ref{fig:distinguished}).  The proof goes by induction on the
interval $[t_i, t_{i+1})$ containing $t$. The case $i=0$ follows
from the stationarity of $\p_\L$ for the East process on $\L$ with a
frozen zero on the right. Assume that for all $t \in [t_{i-1}, t_i)$
the law of ${\sigma_t}_{[a,\xi_{t_i-})}$, given the path  $\{
\xi_s\}_{s \leq t}$, is the equilibrium measure
$\pi_{[a,\xi_{t_i-})}$. Given the legal ring at site $b+i-1$ at time
$t_i$, we conclude that the law  of
 ${\sigma_{t_i}}_{[a,\xi_{t_i})}$ is the equilibrium measure $\pi_{[a,\xi_{t_i})}$:
indeed, the new configuration ${\sigma_{t_i}}_{[a,\xi_{t_i})}$
consists of the existing configuration ${\sigma_{t_i-}
}_{[a,\xi_{t_i-})}$ (distributed according to $\pi_{[a, \xi_{t_i-}
)} =\pi_{[a,b+i-1)}$) together with the new configuration at site
$b+i-1$ created by the legal ring, which is also Bernoulli$(1-q)$,
thus making the whole configuration distributed according to $
\p_{[a, \xi_{t_i})}=\pi_{[a,b+i)}$. Then, at any subsequent time
$t\in [t_i, t_{i+1})$, knowing that there is no legal ring at site
$b+i$ and
 by stationarity,  the law  of
${\sigma_t}_{[a,\xi_{t})}={\sigma_t}_{[a,\xi_{t_i})}$ remains
i.i.d.\ Bernoulli$(1-q)$.
This carries the induction  forward and ends the proof of the lemma.
\end{proof}

We are now  in position to prove Theorem \ref{th:outI}.

\begin{proof}[Proof of Theorem \ref{th:outI}]
Let $f$ be a local function, and assume that its support is included in $[a,a']$. Assume for simplicity that $\pi(f)=0$.
Given a configuration $\sigma$, let $b=b(\sigma)=\inf \{x \geq a'+1 \mbox{ s.t. } \sigma(x)=0\}$ be the position of the first empty site
 in $\sigma$ on the right of $a'$ (it exists $Q$ a.s.). Make $b$ distinguished and denote by $\xi_s$ its position at time $s$.
Given the path $\{\xi_s\}_{s \leq t}$, let $0 < t_1 < t_2 < \dots< t_{n-1}<t$ be the times when the distinguished zero jumps,
and set $t_0=0$, $t_n=t$. By construction, $\xi_s=b+i$ for any $s \in [t_i,t_{i+1})$, see Figure \ref{fig:distinguished}.

 Since the support of $f$ is included in $[a,a'] \subset [a,\xi_t)$,
thanks to Lemma \ref{lem:distinguished} it holds
\begin{align} \label{berluingalera}
\pi_{[a,b)} \left( \bbE_.(f(\sigma_t) \tc \{\xi_s\}_{s \leq t}) \right)
 =
\int d\pi_{[a,b)}(\sigma) \bbE_\sigma(f({\sigma_t}_{[a,\xi_t)}) \tc \{\xi_s\}_{s \leq t})
 =
\pi_{[a,\xi_t)} (f)
 =0 .
\end{align}
Note that the notation used in the above first member is justified
by Remark \ref{berlufuori}. The same remark will be frequently used
below in our notational choice.

For any $\sigma$, thanks to \eqref{berluingalera} and the
Cauchy-Schwarz inequality,
\begin{align}  \label{mortesua}
|\bbE_\sigma\bigl(f(\sigma_t)\bigr)| & \leq
\bbE_\sigma \left( |\bbE_\sigma(f(\sigma_t) \tc \{\xi_s\}_{s \leq t}) | \right) \nonumber \\
& \leq
\frac{1}{ (p \wedge q)^{b-a}} \bbE_\sigma \left( \int d\pi_{[a,b)}(\eta) |\bbE_\eta(f(\eta_t) \tc \{\xi_s\}_{s \leq t}) | \right) \nonumber \\
& \leq
\frac{1}{ (p \wedge q)^{b-a}} \bbE_\sigma \left( \var_{\pi_{[a,b)}} (\bbE_.(f(\eta_t) \tc \{\xi_s\}_{s \leq t}))^{1/2} \right) .
\end{align}
Now our aim is to control the right hand side of the latter, using
the Poincar\'e Inequality. Denote by $V_i:=[a,b+i)$ for
$i=0,1\dots,n-1$ (that corresponds to $[a,\xi_s)$ when $s \in
[t_i,t_{i+1})$) and by $\{P_s^{(i)}\}_{s \in [t_i,t_{i+1})}$ the
Markov semigroup associated to the East process in the interval
$V_i$ with a fixed zero boundary condition at site $b+i$. Given the
path $\{\xi_s\}_{s \leq t}$, thanks to Remark \ref{berlufuori},
${\sigma_t}_{[0,\xi_t)}$ coincides with the process obtained from
the initial configuration $\sigma_{V_0}$ evolving according to
$\{P_s^{(0)}\}_{s \in [t_0,t_{1})}$, up to time $t_1$, then evolving
according to $\{P_s^{(1)}\}_{s \in [t_1,t_{2})}$, up to time $t_2$,
and so on. Hence, if one writes for simplicity $\sigma \otimes
\sigma'=\sigma_{V_0} \sigma'_{\{b\}}$ for the configuration in
$\{0,1\}^{V_1}$ equal to $\sigma$ on $V_0$ and to $\sigma'$ on
$\{b\}$, then, for any $\eta$,
\begin{align*}
\bbE_{\eta}(f(\eta_t) \tc \{\xi_s\}_{s \leq t})
& =
\bbE_{\eta_{V_0}}(f(\eta_t) \tc \{\xi_s\}_{s \leq t}) \\
& =
\sum_{\genfrac{}{}{0pt}{}{\sigma' \in \{0,1\}}{\sigma \in \{0,1\}^{V_0}}}
 P_{t_1}^{(0)}(\eta_{V_0},\sigma)\pi_b(\sigma')
\bbE_{\sigma \otimes \sigma'}(f((\sigma \otimes \sigma')_{t-t_1}) \tc \{\xi_s\}_{s \in[t_1,t]})
\end{align*}
Therefore,
\begin{align*}
& \var_{\pi_{V_0}} \left(\bbE_{.}(f(\eta_t) \tc \{\xi_s\}_{s \leq t})  \right) \\
& \qquad \qquad \leq e^{-2 \gap(\cL_{V_0})t_1} \var_{\pi_{V_0}}
\left(
\sum_{\sigma' \in \{0,1\}} \pi_b(\sigma') \bbE_{\sigma \otimes \sigma'}(f((\sigma \otimes \sigma')_{t-t_1}) \tc \{\xi_s\}_{s \in[t_1,t]}) \right) \\
& \qquad \qquad \leq
e^{-2 \gap(\cL)t_1} \var_{\pi_{V_1}} \left(
 \bbE_{.}(f(\eta_{t-t_1}) \tc \{\xi_s\}_{s \in[t_1,t]}) \right)
\end{align*}
where we used Proposition \ref{prop:monotony} to bound from below
$\gap(\cL_{V_0})$ by $\gap(\cL)$, and the convexity of the variance.
The same procedure leads to
\begin{align*}
& \var_{\pi_{V_1}} \left( \bbE_{.}(f(\eta_{t-t_1}) \tc \{\xi_s\}_{s \in[t_1,t]}) \right)  \\
& \qquad \qquad \leq e^{- 2 \gap(\cL)(t_2-t_1)}
\var_{\pi_{V_2}} \left(
 \bbE_{.}(f((\eta)_{t-t_1-t_2}) \tc \{\xi_s\}_{s \in[t_2,t]})
 \right)
\end{align*}
so that, by a simple induction, we get
$$
\var_{\pi_{V_0}} \left(\bbE_{.}(f(\eta_t) \tc \{\xi_s\}_{s \leq t})
\right) \leq e^{-2 \gap(\cL) t} \var_{\pi_{[a, \xi_t]} } (f) .
$$
Plugging this bound into \eqref{mortesua} leads to
$$
|\bbE_\sigma(f(\sigma_t)| \leq \frac{e^{- \gap(\cL) t}}{ (p \wedge
q)^{b-a}} \bbE_\sigma \left( \var_{\pi_{[a, \xi_t]}} (f)^{1/2}
\right) \leq \frac{e^{- \gap(\cL) t}}{ (p \wedge q)^{b-a}} \|
f\|_\infty.
$$
Fix $\delta >0$. From the latter, we finally get
\begin{align*}
 \int dQ(\sigma) |\bbE_\sigma(f(\sigma_t)) - \pi(f)|
& =
\int dQ(\sigma) |\bbE_\sigma(f(\sigma_t))| \mathds{1}_{b \leq a' + \delta t} \\
& \quad + \int dQ(\sigma) |\bbE_\sigma(f(\sigma_t))| \mathds{1}_{b > a'+ \delta t} \\
& \leq
\left( e^{- \gap(\cL) t} \int  \frac{dQ(\sigma)}{ (p \wedge q)^{b(\sigma)-a+\delta t}} + Q(b > a'+ \delta t) \right) \| f\|_\infty  \\
& = \left( e^{- \gap(\cL) t} \frac{1-\alpha}{ (p \wedge q)^{a'-a}}
\sum_{k=0}^{\delta t} \left( \frac{\alpha}{ p\wedge q} \right)^k +
\alpha^{\delta t +1} \right) \| f\|_\infty  .
\end{align*}
Now we distinguish between two cases. $(i)$ If $\alpha < p \wedge
q$, then one lets $\delta$ tend to $+\infty$ so that the expected
result immediately follows. $(ii)$ if $\alpha > p \wedge q$, then
one chooses $\delta = \frac{\gap(\cL)}{2 \log(\alpha/q)}$. The
expected result follows after some simple algebra and few
simplifications left to the reader. This ends the proof.
\end{proof}


\section{Out of equilibrium II, aging and plateau behavior: proof of Theorem \ref{plateau}} \label{sec:outII}

The aim of this section is to prove Theorem \ref{plateau}. We will
not give the complete proof (that is quite long and involved). The
interested reader may  however found it in \cite{FMRT-cmp} under the
additional condition on $\mu$ that  $\mu([k,\infty))>0$ for any $k
\in \bbN$. Here, we shall only explain briefly the extra ingredient
that we use in order to remove this  condition.
 To this aim,  we will  use   different technical lemmas from \cite{FMRT-cmp} that we recall, for
completeness, at the end of this section.

\begin{proof}
The technical condition on $\mu$ is used in Section 4.2 of
\cite{FMRT-cmp}, namely on the finite volume approximation. More
precisely, it is used to guarantee the existence of infinitely many
empty sites  that remain empty up to a final fixed time $t_N$ with
$N$ fixed (recall Definition \ref{stalling-active}). Such empty
sites then allow to compare the East process on $[0,\infty)$ with
the East process on a finite box $[0,L]$.

The strategy, in order to remove the technical condition on $\mu$, that we adopt here is the following.
 We may prove that the process itself creates infinitely many empty sites that remain empty up to the final fixed time $t_N$. Given this, the proof remains unchanged with respect to \cite{FMRT-cmp}. In turn, this is a consequence of Lemma \ref{chepalle} below. Hence one only needs to prove Lemma \ref{chepalle}.

Recall that $d \in \bbN\setminus\{0\}$ is the smallest length such that
$\mu(\{d\})>0$, and that $n_d$ is the smallest integer $n$ such that  $d \in [2^{n-1}+1,2^n]$ (we can
assume $d>1$ otherwise the setting is the same of \cite{FMRT-cmp}). In the sequel, we fix $n=n_d$, for simplicity of notations.

Observe first that, since $Q$ is renewal, for any $j \geq 1$, almost
surely there exist infinitely many sites $\{x^{(m)}\}_{m \in \bbN}$
such that, $\sigma(x^{(m)}+id)=0$ for $i=0,\dots,j$, and
$\sigma(y)=1$ for all $y \in
[x^{(m)},x^{(m)}+jd]\setminus\{x^{(m)}+id, i=0,\dots j\}$, {\it
i.e.}\ infinitely many collections of $j+1$ consecutive empty sites
at distance $d$ one from the next one.  Denote for simplicity by
$x_1$ the first positive site satisfying the above property and set
$x_i=x_1+(i-1)d$, $i=2,\dots,j+1$. Denote by
$\Lambda_i=[x_i+1,x_{i+1}-1]$, $i=1,\dots,j$ (See Figure \ref{out}).

\begin{figure}[ht]
\psfrag{d}{ $d$}
 \psfrag{}{\footnotesize $\epsilon$}
 \psfrag{x1}{\footnotesize $x_1$}
 \psfrag{x2}{\footnotesize $x_2$}
 \psfrag{x3}{\footnotesize$x_3$}
 \psfrag{xj-1}{\footnotesize$x_{j-1}$}
 \psfrag{xj}{\footnotesize$x_j$}
 \psfrag{xj+1}{\footnotesize$x_{j+1}$}
\psfrag{l1}{$\!\!\! \Lambda_1$}
\psfrag{l2}{$\!\!\!\!\Lambda_2$}
\psfrag{lj-1}{$\!\!\Lambda_{j-1}$}
\psfrag{lj}{$\!\!\Lambda_{j}$}
 \includegraphics[width=.80\columnwidth]{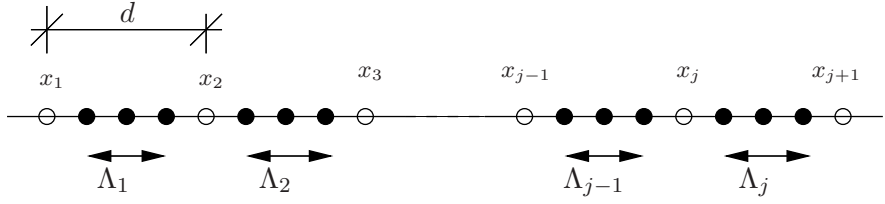}
\caption{Points $x_1,\dots , x_{j+1}$ and intervals $\L_1,\dots ,
\L_j$. } \label{out}
 \end{figure}

Finally, introduce the following hitting times:
$$
\tilde \tau_i = \inf \{t \geq 0 : \sigma_t(x_i)=1\}
\quad \mbox{and} \quad
\tau_i = \inf \{t \geq 0 : \sigma_t(y)=1 \mbox{ for any } y \in \Lambda_i \cup \{x_i\}\}  .
$$
Namely, $\tilde \tau_i$ is the first time that the vacancy at $x_i$
is removed, while $\tau_i$ is the first time that the box
$[x_i,x_{i+1}-1]$ appears totally filled. Note that, by
construction, it must be   $\tilde \tau_i <  \tau_i$ (trivially
$\tilde \tau_i \leq \tau_i$ and observe that when the zero is
removed from $x_i$ for the first time there must be a zero on
$x_i+1$).

\begin{Lemma} \label{chepalle}
For any positive integer $j \geq 3$, there exists a positive constant $c(j)$ such that, almost surely
$$
\liminf_{q \downarrow 0} \bbP_\sigma(\tilde \tau_2 < \tau_2 < \tilde \tau_3 < \tau_3 < \dots < \tilde \tau_{j-1} < \tau_{j-1} < \min(\tilde \tau_1, \tau_{j}) ) \geq c(j) .
$$
\end{Lemma}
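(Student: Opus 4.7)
The plan is to exhibit an explicit event $A$, of probability bounded below uniformly in $q$, on which the event $E$ holds with probability $1-o(1)$ as $q\downarrow 0$. The key device is to consider the \emph{isolated activation times} of the boxes $B_i = [x_i, x_{i+1}-1]$ and of the analogous box beyond $x_{j+1}$.

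For $i=1,\dots,j+1$, let $y_i$ denote the next zero of $\sigma_0$ strictly to the right of $x_i$ (so $y_i = x_{i+1}$ for $i\le j$, while $y_{j+1}$ sits at some distance $d'\ge d$ from $x_{j+1}$), and let $Z_i$ be the first time the vacancy at $x_i$ is removed in the East dynamics on $[x_i, y_i-1]$ with a frozen zero at $y_i$, driven solely by the Poisson clocks in $[x_i, y_i-1]$. By the graphical construction the $Z_i$'s are independent; since the boxes $B_1,\dots,B_j$ all have the same structure, $Z_1,\dots,Z_j$ are i.i.d., and in the marginal case $d'=d$ also $Z_{j+1}$ shares their common law (for $d'>d$, $Z_{j+1}$ is stochastically larger in the scaling $q^{-n_d}$). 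Set
$$
A := \bigl\{Z_2 < Z_3 < \cdots < Z_{j-1} < \min(Z_1, Z_j, Z_{j+1})\bigr\}.
$$
By exchangeability, $\liminf_{q\downarrow 0}\bbP_\sigma(A)\ge 6/(j+1)!$, and the lemma will follow with $c(j):=6/(j+1)!$ provided one shows that on $A$ the event $E$ holds with probability $1-o(1)$.

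For that claim, first observe a monotonicity: adding vacancies can only accelerate the East dynamics, so in the original process each activation time is bounded below by the corresponding isolated one, $\tilde\tau_i\ge Z_i$ for $i=1,\dots,j$, and the first flip time $\tilde\tau^*:=\inf\{s\ge 0:\sigma_s(x_{j+1})\neq 0\}$ in the original process satisfies $\tilde\tau^*\ge Z_{j+1}$. On $A$ this gives $\sigma_s(x_{j+1})=0$ throughout $[0,Z_{j-1}]$, and an induction from the right using the oriented character (Remark \ref{berlufuori}) then yields $\tilde\tau_i=Z_i$ for each $i=2,\dots,j-1$. Since after rescaling by $q^{-n_d}$ each $Z_i$ converges to a non-degenerate strictly positive random variable (consistent with Proposition \ref{prop:comb} and with the asymptotics of \cite{FMRT-cmp}), on $A$ the gaps $Z_{i+1}-Z_i$ and $\min(Z_1,Z_j,Z_{j+1})-Z_{j-1}$ all diverge at rate $q^{-n_d}$. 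The cleanup times $\tau_i-\tilde\tau_i$, amounting to a bounded (in $q$) number of ``back-and-forth'' flips, are of subleading order $o(q^{-n_d})$, and therefore negligible compared to these gaps. This gives the required ordering $\tilde\tau_2<\tau_2<\cdots<\tau_{j-1}<\min(\tilde\tau_1,\tau_j)$ with probability $1-o(1)$ on $A$.

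The main subtlety, and the new ingredient with respect to \cite{FMRT-cmp}, is the inclusion of $Z_{j+1}$ in the event $A$. Instead of freezing $x_{j+1}$ (which is effective only when an arbitrarily large right-hand buffer is available, as ensured in \cite{FMRT-cmp} by the condition $\mu([k,\infty))>0$ for all $k$), we let the boundary fluctuate and incorporate its activation time symmetrically; this handles in particular the marginal case $\mu=\delta_d$ (with $d'=d$) in which no further buffer is available.
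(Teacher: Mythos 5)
There is a genuine gap, and it sits exactly at the point the lemma is designed to handle. Your argument rests on the claim that ``adding vacancies can only accelerate the East dynamics'', from which you deduce $\tilde\tau_i\ge Z_i$ and, crucially, $\tilde\tau^*\ge Z_{j+1}$. The East model is \emph{not} attractive: in the graphical construction, a configuration with an extra vacancy at $x+1$ permits a legal ring at $x$ that the other configuration suppresses, and that ring may \emph{fill} $x$, destroying the order. So no such pathwise domination holds, and the inequalities $\tilde\tau_i\ge Z_i$ cannot be obtained by monotonicity. What \emph{is} true (via the oriented character, Remark \ref{berlufuori}) is that the true dynamics on $[x_i,x_{i+1})$ \emph{coincides} with the isolated one for as long as $x_{i+1}$ stays empty. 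But then your control of $x_{j+1}$ is circular: to know that the true first flip of $x_{j+1}$ occurs no earlier than $Z_{j+1}$ you must know that $y_{j+1}$ stays empty up to that time, which requires controlling $y_{j+2}$, and so on — an infinite regress that your event $A$ does not close. In the marginal case $\mu=\delta_d$ there is no large gap anywhere on the right to anchor this chain, which is precisely why the condition $\mu([k,\infty))>0$ was imposed in \cite{FMRT-cmp} and why the lemma needs a different mechanism. The paper's proof breaks the regress without any boundary control: by Remark \ref{rem:comb}, killing the zero at $x_j$ forces $n$ \emph{simultaneous} new zeros inside $\Lambda_j$, and Lemma \ref{zeri} bounds the probability of ever seeing $n$ prescribed new zeros in a fixed finite box up to time $\delta t_n$ by $O(\delta)$ \emph{conditionally on all the randomness outside that box}. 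This gives the event $B$ probability $\ge 1-d\delta$ and turns $x_j$ into an effective frozen boundary for $[x_1,x_j-1]$, after which a finite-volume induction (Claims \ref{inferno1} and \ref{inferno2}) applies.

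Two further steps are asserted without proof and are not routine. First, the claim that the ``cleanup'' times $\tau_i-\tilde\tau_i$ are $o(q^{-n_d})$ is exactly the content that Claim \ref{inferno1} (via Lemma \ref{rates} and an iteration over time blocks of length $T_n$) is needed for; after activation the box contains a random set of zeros whose removal is itself constrained, and bounding the filling time requires the relaxation estimate on scale $T_n=(1/q)^{(n-1)(1+3\e)}\ll t_n$, not a ``bounded number of back-and-forth flips''. Second, on your event $A$ the gaps $Z_{i+1}-Z_i$ are merely positive; to dominate the cleanup times you must further restrict to gaps exceeding $\e q^{-n_d}$ and quantify the loss of probability, which requires the non-degeneracy of the rescaled limit law of $Z_i$ — again a statement of the same depth as the lemma itself. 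The exchangeability idea for lower-bounding the probability of the ordering is appealing, but as it stands the proof transfers the entire difficulty into unproven coupling and timing claims.
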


\begin{proof}[Proof of Lemma \ref{chepalle}]
   Recall that, for any interval $\Lambda$, $\bbP^{\Lambda}_\sigma$
denotes the law of the process on the interval $\Lambda$, with empty
boundary condition, starting from $\sigma$. Fix a parameter $\delta
\in (0,1)$ (that will be chosen later) and, for $i,k=1,\dots, j-1$,
let $E_i^k$ be the event that, during the time interval
$$I_k:=[\frac{\delta(k-1)}{j-2}t_n, \frac{\delta k}{j-2}t_n]\,,$$ there have
never been simultaneously $n$ empty sites in the interval
$\Lambda_i$:
$$
E_i^k = \Big\{
 \sum_{y \in
\Lambda_i}\{ 1 - \sigma_s(y)\} \leq n-1 \qquad\forall s \in I_k \Big
\} .
$$
Also, set $A, B$ for the events that, in the time interval
$[0,\delta t_n]$, there have never been simultaneously $n$ empty
sites in the boxes $\L_1, \Lambda_j$ respectively:
\begin{align*}
& A=\Big \{  \sum_{y \in \Lambda_1} \{1 - \sigma_s(y)\} \leq n-1
\qquad \forall s \in [0, \delta t_n] \Big\} \,,\\
& B=\Big \{  \sum_{y \in \Lambda_j} \{1 - \sigma_s(y)\} \leq n-1
\qquad \forall s \in [0, \delta t_n] \Big\} .
\end{align*}
Finally, let $G$ be the event of the Lemma,  $F_i^k=\{ \tau_i \in
I_k
\}$ and
 $$ G^k_r= \left( \cap_{ k+2\leq \ell \leq  r } E_{\ell}^k \right) \cap F_{k+1}^k\,,
\qquad k=1,\dots,j-2, \qquad r\geq k+2. $$
By words, the event $G^k_r$ can be described
as follows: during the time interval $I_k$
the box $[x_{k+1},x_{k+2})= \{x_{k+1} \} \cup \L_{k+1} $ appears
totally filled for the first time, while in the above time interval
$I_k$  there are always less than $n$ simultaneous  empty sites in
all other boxes $\L_\ell=(x_\ell, x_{\ell+1})$ with $k+2 \leq \ell
\leq r$.


Figure \ref{fig:chepalle} illustrates the event $G$ of the lemma (a
special realization of it) and the evolution of the configuration
(with positive probability) from time $0$ till time $\delta t_n$.
This may help the reader to follow the proof.

\begin{figure}[ht]
\psfrag{t}{\!\!\!\!\!\! time $t$}
\psfrag{t0}{\footnotesize$0$}
\psfrag{t1}{\footnotesize $\frac{\delta t_n}{j-2}$}
\psfrag{t2}{\footnotesize $\frac{2\delta t_n}{j-2}$}
\psfrag{t3}{\footnotesize $\frac{3\delta t_n}{j-2}$}
\psfrag{t4}{\footnotesize $\delta t_n$}
\psfrag{tbar}{\footnotesize $\bar t=\frac{\delta (j-3)t_n}{j-2}$}
 \psfrag{x1}{\footnotesize $x_1$}
 \psfrag{x2}{\footnotesize $x_2$}
 \psfrag{x3}{\footnotesize$x_3$}
 \psfrag{x4}{\footnotesize $x_4$}
 \psfrag{x5}{\footnotesize $x_5$}
 \psfrag{xj-1}{\footnotesize$x_{j-1}$}
 \psfrag{xj}{\footnotesize$x_j$}
 \psfrag{xj+1}{\footnotesize$x_{j+1}$}
\psfrag{l1}{$\!\!\! \Lambda_1$}
\psfrag{l2}{$\!\!\!\!\Lambda_2$}
\psfrag{l3}{$\!\!\! \Lambda_3$}
\psfrag{l4}{$\!\!\!\!\Lambda_4$}
\psfrag{lj-1}{$\!\!\Lambda_{j-1}$}
\psfrag{lj}{$\!\!\Lambda_{j}$}
 \includegraphics[width=.80\columnwidth]{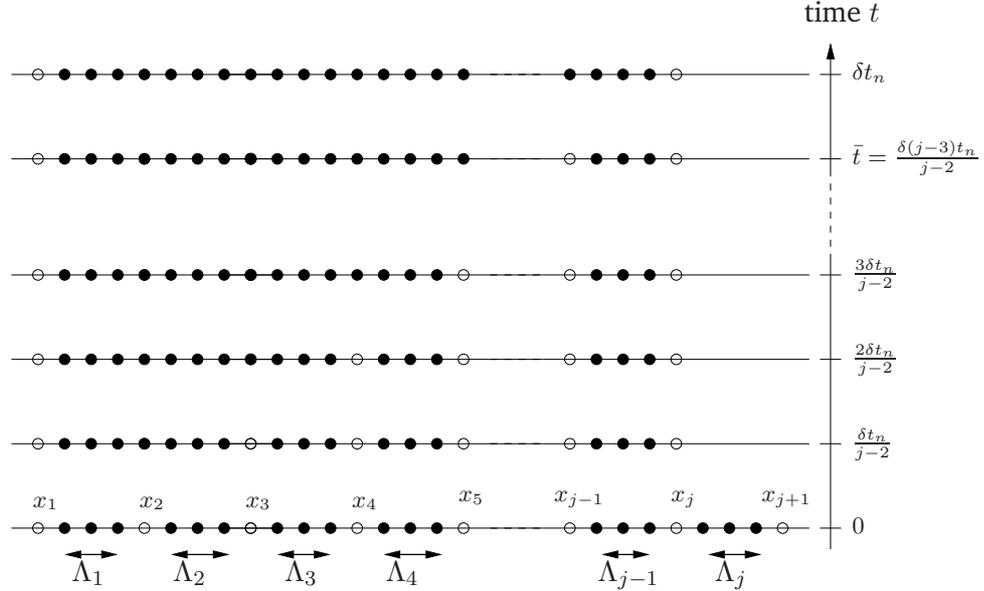}
 \caption{The event $G$ is implied by the evolution illustrated on
the picture.
Site $x_{j}$ remains empty (up to time $\delta t_n$) and acts as a
boundary condition of $[x_1,x_j-1]$ during this time interval. $\bar
\sigma$ is  the configuration  at time $\bar t=\delta
t_n(j-3)/(j-2)$.} \label{fig:chepalle}
 \end{figure}


 We now consider the event  $\cap_{k=1}^{j-2} G^k_{j-1} $. Due to Remark
 \ref{rem:comb},  for  $1\leq k \leq j-2$   the empty site $x_{k+1}$  remains empty during  the time intervals $I_1 \cup I_2 \cup
 \dots\cup
 I_{k-1}$   and is filled  together with the
 whole box $\L_{k+1}$ during the time interval $I_{k}$. In
 particular, the event $\cap_{k=1}^{j-2} G^k_{j-1} $ implies that
 $$\tilde \tau_2 < \tau_2 < \tilde \tau_3 < \tau_3 < \dots < \tilde \tau_{j-1} <
 \tau_{j-1}\,. $$
Always by Remark  \ref{rem:comb} the events $A,B$ imply that the
empty sites $x_1,x_j$ remain empty up to time $\d t_n$. As a
consequence we conclude that
$$ \left(\cap_{k=1}^{j-2} G^k_{j-1} \right) \cap A\cap B \subset G\,.
$$
so that \begin{equation}\label{passo1} \bbP_\sigma(G) \geq \bbP_\s
\bigl(A \tc  (\cap_{k=1}^{j-2} G^k_{j-1}) \cap   B\bigr)
\bbP_\sigma\bigl( \cap_{k=1}^{j-2} G^k_{j-1} \tc B\bigr)
\bbP_\sigma( B)\, .
\end{equation}
By Lemma \ref{zeri} below, it holds
\begin{equation}\label{passo2}
\bbP_\s  \bigl(A \tc  (\cap_{k=1}^{j-2} G^k_{j-1}) \cap   B\bigr) ,
\bbP_\sigma( B) \geq 1 - d \delta .
\end{equation}
On the other hand, as already explained above, the event $B$ guarantees that $\sigma_s(x_j)=0$ for all $s \in [0,\delta t_n]$,
so that, up to time $\delta t_n$, the process in infinite volume restricted to $[x_1,x_{j}-1]$
 coincides with the process in the finite volume $[x_1,x_{j}-1]$ (with empty boundary condition at site $x_j$).
Hence,
\begin{equation}\label{passo3}
\bbP_\sigma( \cap_{k=1}^{j-2} G^k_{j-1} \tc B) =
\bbP_\sigma^{[x_1,x_{j}-1]}( \cap_{k=1}^{j-2} G^k_{j-1} ) .
\end{equation}
Our aim is to bound the latter inductively. 
To that purpose  set $$X=\bbP_{\sigma_{0\mathds{1}}}^{[1,d]}\left(\t
\leq \d t_n /(j-2)\right )\,,$$ \ie  the probability that, starting
from $\sigma_{0\mathds{1}}$, within time $\delta t_n/(j-2)$ the box
$[1,d]$ appears completely filled.


Let also $\bar \sigma$ be the configuration obtained from $\sigma$
by removing the empty sites $x_2$, $x_3,\dots,x_{j-2}$ ({\it i.e.}\
$\bar \sigma(x)=\sigma(x)$ for all $x \neq x_2,x_3,\dots,x_{j-2}$
and  $\bar \sigma(x_i)=1$ for $i=2,3,\dots,j-2$, see Figure
\ref{fig:chepalle}). Set for simplicity $\bar t = \frac{\delta
(j-3)t_n}{j-2}$. Conditioning on the $\sigma$-algebra $\cF_{\bar t}$
generated by the Poisson processes and coin tosses up to time $\bar
t$, and using the Markov Property, we conclude that
\begin{align}
\bbP_\sigma^{[x_1,x_{j}-1]}( \cap_{k=1}^{j-2} G^k_{j-1} ) & \geq
X  \bbP_\sigma^{[x_1,x_{j}-1]}( (\cap_{k=1}^{j-3} G^k_{j-1}) \cap \{\sigma_{\bar t} = \bar \sigma \} ) \nonumber \\
& \geq X  \left[ \bbP_\sigma^{[x_1,x_{j}-1]}( \cap_{k=1}^{j-3}
G^k_{j-1}) - \bbP_\sigma^{[x_1,x_{j}-1]}((\cap_{k=1}^{j-3}
G^k_{j-1}) \cap \{ \sigma_{\bar t} \neq \bar \sigma \}) \right] .\label{passo4}
\end{align}
We deal with the two terms of the latter separately. Set $\bar B$ for the event that, up to time $\bar t$,
there have never been simultaneously $n$ empty sites in the interval $\Lambda_{j-1}$:
$$
\bar B= \Big \{  \forall s \in [0, \bar t\,], \sum_{y \in
\Lambda_{j-1}} \{1 - \sigma_s(y)\} \leq n-1\Big \}\,  .
$$
Then, we observe that  $\cap_{k=1}^{j-3} G^k_{j-1} =
 \cap_{k=1}^{j-3}
G^k_{j-1} \cap \bar B$. Hence, again using Lemma \ref{zeri} below
and the fact that the event $\bar B$ guarantees that
$\sigma_s(x_{j-1})=0$ for all $s \in [0, \bar t]$, we have
\begin{align}
\bbP_\sigma^{[x_1,x_{j}-1]}( \cap_{k=1}^{j-3} G^k_{j-1}) & =
\bbP_\sigma^{[x_1,x_{j}-1]}( \cap_{k=1}^{j-3} G^k_{j-1}  \tc \bar B)
\bbP_\sigma^{[x_1,x_{j}-1]}( \bar B) \nonumber  \\
& \geq \bbP_\sigma^{[x_1,x_{j-1}-1]}( \cap_{k=1}^{j-3} G^k_{j-2} )
(1-d \delta ) .\label{passo5}
\end{align}
Due to \eqref{passo4} and \eqref{passo5},  from Claim \ref{inferno1}
and Claim  \ref{inferno2} below, we conclude that, for $\delta$ and
$q$ small enough,
$$
\bbP_\sigma^{[x_1,x_{j}-1]}( \cap_{k=1}^{j-2} G^k_{j-1} ) \geq c_1
\bbP_\sigma^{[x_1,x_{j-1}-1]}( \cap_{k=1}^{j-3} G^k_{j-2} ) -c_2 q
$$
for some constants $c_1,c_2>0$ independent on $q$. A simple
iteration (adapting Claim \ref{inferno2})  allows us to end up with
\begin{equation}\label{montevettore}
\bbP_\sigma^{[x_1,x_{j}-1]}( \cap_{k=1}^{j-2} G^k_{j-1} ) \geq c_1'
\bbP_\sigma^{[x_1,x_3-1]}( F^1_2) - c_2 q  =  c_1' X -c_2 q\,.
\end{equation}
for some constant $c'$ depending on $j$ but independent from  $q$.
The result of Lemma \ref{chepalle} follows then from \eqref{passo1},
\eqref{passo2}, \eqref{passo3} and \eqref{montevettore}.

\begin{claim}\label{inferno1}
There exists $q_o, \delta_o>0$ and a constant $c=c(d,j)>0$ (independent on $q$) such that
$$
X:=\bbP_{\sigma_{0\mathds{1}}}^{[1,d]}(\tau \leq \delta t_n/(j-2) )
\geq c \qquad \forall \delta \in (0,\delta_o),\quad \forall q \in
(0,q_o) \,.
$$
\end{claim}

\begin{claim}\label{inferno2}
There exists a constant $c=c(j)$ (independent on $q$) such that
$$
\bbP_\sigma^{[x_1,x_{j}-1]}((\cap_{k=1}^{j-3} G^k_{j-1}) \cap  \{
\sigma_{\bar t} \neq \bar \sigma \}) \leq c q .
$$
\end{claim}

\begin{proof}[Proof of Claim \ref{inferno1}]
 Set $M=\delta t_n /[(j-2) T_n]$
where $T_n:=(1/q)^{(n-1)(1+3\e)}$ is defined in \cite[Section 3.2]{FMRT-cmp} with some fixed (small) parameter $\e>0$. Hence, one has to study
$Y_M := \bbP_{\sigma_{0\mathds{1}}}^{[1,d]}(\tau > \delta t_n/(j-2)) =
\bbP_{\sigma_{0\mathds{1}}}^{[1,d]}(\tau > M T_n)$. To bound $Y_M$ we use an induction procedure.
Namely, by the Strong Markov Property, we have
\begin{align*}
Y_M
&=
\bbP_{\sigma_{0\mathds{1}}}^{[1,d]}(\tau > (M-1) T_n + T_n)
=
\bbE_{\sigma_{0\mathds{1}}}^{[1,d]} \left( \mathds{1}_{\tau > (M-1)T_n} \bbP_{\sigma_{(M-1)T_n}}^{[1,d]}(\tau > T_n) \right) \\
& =
\bbE_{\sigma_{0\mathds{1}}}^{[1,d]} \left( \mathds{1}_{\tau > (M-1)T_n} \bbP_{\sigma_{0\mathds{1}}}^{[1,d]}(\tau > T_n) \right) \\
& \quad +
\bbE_{\sigma_{0\mathds{1}}}^{[1,d]} \left( \mathds{1}_{\tau > (M-1)T_n} \bbP_{\sigma_{(M-1)T_n}}^{[1,d]}(\tau > T_n) \mathds{1}_{\sigma_{(M-1)T_n} \neq \sigma_{0\mathds{1}}}\right) \\
& \leq
Y_{M-1} Y_1 + \bbP_{\sigma_{0\mathds{1}}}^{[1,d]}(\{\tau > (M-1) T_n\} \cap \{\sigma_{(M-1)T_n} \neq \sigma_{0\mathds{1}} \}) .
\end{align*}
 We observe that the event $\{\t > (M-1)T_n\}$  together with
$\{ \sigma_{(M-1)T_n} \neq \sigma_{0\mathds{1}}\}$ guarantees that
the configuration $\sigma_{(M-1)T_n}$ has an empty site in $[2,d]$,
which was not present in $\sigma_{0\mathds{1}}$. Indeed, if that was
false then it should be $\sigma_{(M-1)T_n}=\sigma_{0\mathds{1}}$ or
$\sigma_{(M-1)T_n}=\sigma_{\mathds{1}}$, thus leading to a
contradiction.   Hence, by Lemma \ref{zeri} below, we get
$$
\bbP_{\sigma_{0\mathds{1}}}^{[1,d]}(\{ \tau > (M-1) T_n \} \cap \{ \sigma_{(M-1)T_n} \neq \sigma_{0\mathds{1}} \})
 \leq q .
$$
In turn, $Y_M \leq Y_{M-1}Y_1 + q$. Set $X_k=Y_k-\frac{q}{1-Y_1}$ so
that $X_M \leq X_{M-1} Y_1$ which, after iteration, and using that
$-q/(1-Y_1) \leq 0$, leads to $Y_M \leq Y_1^M + \frac{q}{1-Y_1}$.

To end the proof we need to examine the term $Y_1=\bbP_{\sigma_{0\mathds{1}}}^{[1,d]}(\tau > T_n)$. To that aim,
using $\tilde \tau$ defined above,  we have the following decomposition
$$
Y_1=\bbP_{\sigma_{0\mathds{1}}}^{[1,d]}(\tilde \tau > T_n)
+ \bbP_{\sigma_{0\mathds{1}}}^{[1,d]}( \tau > T_n> \tilde \tau) .
$$
As above, the event $\{\tau > T_n> \tilde \tau\}$ implies that, at time $T_n$ there is an empty site in $[2,d]$, which was not present in $\sigma_{0\mathds{1}}$. Hence, by Lemma \ref{zeri} below
$\bbP_{\sigma_{0\mathds{1}}}^{[1,d]}( \tau > T_n> \tilde \tau) \leq q$. Applying Lemma \ref{rates} to the first term in the right hand side of the latter, it follows that
$Y_1 \leq \exp\{-cT_n/t_n\} + q$ for some constant $c$ that does not depend on $q$. Therefore,
as soon as $\delta$ is small enough, expanding in the limit $q \to 0$, we get
$$
Y_M \leq \left( \exp\{-cT_n/t_n\} + q \right)^M +
\frac{q}{1-\exp\{-cT_n/t_n\} - q} = \exp\{-c\delta/(j-2)\} + o(1)
$$
where $o(1)$ goes to zero as $q$ goes to zero.

All the previous computations lead to
$$
X =1-Y_M\geq 1-\exp\{-c\delta/(j-2) + o(1)\, .
$$
This ends the proof of the claim.
\end{proof}

\begin{proof}[Proof of Claim \ref{inferno2}]
First we observe that $\sigma_{\bar t} \neq \bar \sigma$ implies
either that $(a)$ there exists an empty site, at time $\bar t$ that
was not present at time $0$ ({\it i.e.} there exists an empty site
in $[x_1+1,x_{j}-1] \setminus \{x_2,x_3,\dots,x_{j-1}\}$), or $(b)$
at least one of the site $x_k$, $k \in \{2,3,\dots,j-2\}$ is empty.
Thanks to Lemma \ref{zeri} below, case $(a)$ has probability less or
equal to $q$ and we can focus on case $(b)$. Assume for simplicity
that $x_2$ is an empty site at time $\bar t$ (the other cases can be
treated analogously). We follows the lines of \cite[Lemma
4.2]{FMRT-cmp}, appealing to the graphical construction of Section
\ref{graphical}.
Given $m \geq 0$,  we write $\cA_{m}$ for the event that the last
legal ring at $x_2$ before time $\bar t$ (which is well defined
because $x_2$ has been filled during the time interval $[0,\delta
t_n/(j-2)]$) occurs at time $t_{y_1,m}$. Recall that (i) at the time
$t_{y_1,m}$ the current configuration resets its value at $x_2$ to
the value of an independent Bernoulli$(1-q)$ random variable
$s_{x_2,m}$ and (ii) that $\cA_{m}$ depends only on the Poisson
processes associated to sites $x\ge x_2$ and on the Bernoulli
variables associated to sites $x>x_2$. Hence we conclude that (we
drop the superscript $[x_1,x_{j}-1]$)
\begin{align*}
\bbP_\sigma((\cap_{k=1}^{j-3} G^k_{j-1} ) \cap \{\sigma_{\bar
t}(x_2) =0 \} ) & = \bbP_\sigma
\Big(\cup_{m=1}^{\infty} \bigl(\cA_{m} \cap \{s_{x_2,m}=0\}\bigr)\cap\bigl(\cap_{k=1}^{j-3} G^k_{j-1} \bigr) \Big)\\
& \leq  \sum_{m=1}^\infty \bbP_\sigma(s_{x_2,m}=0)\bbP_\sigma (\cA_m
)
 \leq q .
\end{align*}
The claim follows.
\end{proof}

The proof of Lemma \ref{chepalle} is complete.
\end{proof}
The proof of Theorem \ref{plateau} is complete.
\end{proof}

Below we recall some useful technical facts borrowed from \cite{FMRT-cmp}.

Given a configuration $\sigma$, let $\cZ(\sigma)=\{x \in \bbZ : \sigma(x)=0\}$ be the set of all empty sites of $\sigma$.

\begin{Lemma}[Lemma 4.2 of \cite{FMRT-cmp}] \label{zeri}
Fix $\sigma\in\O$, $t\geq 0$ and $k\in\bbN$. Let
$V=[0,a]\sset \bbZ$ and let
$\{y_1,\dots, y_k\}\sset V\setminus \cZ(\s)$. Let finally $\cF$ be the
$\s$-algebra generated by the Poisson processes and coin tosses in
$\bbZ \setminus V$. Then
\begin{equation} \label{eq2}
\bbP^\L_{\sigma}\bigl(\bigl\{y_1,\dots, y_k\}\sset \cZ(\s_s)\tc \cF\bigr)\leq q^k, \qquad \forall s>0.
 \end{equation}
Moreover
\begin{equation} \label{eq2.1}
\bbP_{\sigma}\bigl(\exists\, s\le t:\ \bigl\{y_1,\dots, y_k\}\sset
\cZ(\s_s)\tc \cF\bigr)\leq at q^k\,.
 \end{equation}
\end{Lemma}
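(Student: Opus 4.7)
The strategy exploits the graphical construction of Section \ref{graphical} together with the oriented character of the East constraints, namely the fact that the trajectory $\{\s_r(x+1)\}_{r\geq 0}$ is a deterministic function of the graphical data at sites $\geq x+1$ and never depends on the coin tosses at $x$. This suggests an iterated conditioning argument for \eqref{eq2} and a Palm-type union bound for \eqref{eq2.1}.

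For \eqref{eq2}, order the sites so that $y_1<y_2<\dots<y_k$ and proceed by induction on $k$. Let $\cH_1$ be the $\s$-algebra generated by all graphical data \emph{except} the coin tosses $\{s_{y_1,m}\}_{m\geq 1}$ at the leftmost point $y_1$. By the observation above, the index $m_1(s)$ of the last \emph{legal} ring at $y_1$ strictly before time $s$ is $\cH_1$-measurable, and since $y_1\notin\cZ(\s)$ the event $\{\s_s(y_1)=0\}$ forces $m_1(s)\geq 1$ and $s_{y_1,m_1(s)}=0$, yielding $\bbP_\s(\s_s(y_1)=0\tc \cH_1)\leq q$. Crucially, for $i\geq 2$ the variable $\s_s(y_i)$ depends only on graphical data at sites $\geq y_i\geq y_1+1$ and is therefore $\cH_1$-measurable, so using $\cF\subset\cH_1$ and the tower property we obtain
\[
\bbP_\s\bigl(\{y_1,\dots,y_k\}\subset\cZ(\s_s)\tc\cF\bigr)\leq q\cdot\bbP_\s\bigl(\{y_2,\dots,y_k\}\subset\cZ(\s_s)\tc\cF\bigr),
\]
and the induction hypothesis applied to the $k-1$ sites $\{y_2,\dots,y_k\}\subset V\setminus\cZ(\s)$ supplies the missing factor $q^{k-1}$.

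For \eqref{eq2.1}, since trajectories are piecewise constant, right-continuous and the $y_i$'s are initially filled, the event $\{\exists s\leq t:\{y_1,\dots,y_k\}\subset\cZ(\s_s)\}$ forces the existence of a first time $s^\star\in(0,t]$ at which the inclusion becomes true. At $s^\star$ exactly one $y_i$ must flip from $1$ to $0$, so $s^\star=t_{y_i,m}$ for some $i$ and $m$, the ring is legal, $s_{y_i,m}=0$, and the remaining $y_j$'s are already empty at $s^{\star -}$. A union bound over $i,m$, Campbell's formula for the rate-one Poisson processes at $y_1,\dots,y_k$, the independent $\mathrm{Bernoulli}(q)$ factor coming from $s_{y_i,m}=0$, and \eqref{eq2} applied to the $k-1$ sites $\{y_j\}_{j\neq i}$ at the (deterministic) integration time together give
\[
\bbP_\s(\exists s\leq t:\{y_1,\dots,y_k\}\subset\cZ(\s_s)\tc\cF)\leq q\sum_{i=1}^k\int_0^t\bbP_\s\bigl(\{y_j\}_{j\neq i}\subset\cZ(\s_s)\tc\cF\bigr)\dd s\leq k t q^k\leq a t q^k,
\]
since $k\leq|V|$.

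The main subtle point is the rigorous verification that coin tosses at $y_1$ never feed back into the dynamics at sites $\geq y_1+1$: this is exactly where the oriented character of the East constraints enters, and the same measurability/independence would collapse for a symmetric kinetically constrained model. A secondary care point in \eqref{eq2.1} is that at a Poisson ring time the pre-ring configuration has the same law as at the corresponding fixed deterministic time, which is what justifies applying \eqref{eq2} at the random instant $s^{\star -}$ under the integral; this is a standard consequence of the independence of Poisson occurrences from the remaining graphical data.
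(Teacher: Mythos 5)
Your proof is correct, and the core mechanism you use --- conditioning on the $\sigma$-algebra $\cH_1$ that carries everything except the coin tosses at the leftmost filled site, observing that the index of the last legal ring at $y_1$ before $s$ is $\cH_1$-measurable because legality at $y_1$ depends only on the trajectory at $y_1+1$ (hence on graphical data at sites $\geq y_1+1$), and then peeling off an independent $\mathrm{Bernoulli}(q)$ factor --- is exactly the mechanism the paper itself invokes when it sketches the $k=1$ estimate inside the proof of Claim~\ref{inferno2} (the event $\cA_m$ there being $\cH_1$-measurable and independent of $s_{x_2,m}$). The paper does not reproduce a full proof of Lemma~\ref{zeri}, pointing instead to \cite{FMRT-cmp}, so you are filling in a genuine gap, and your filling is consistent with the paper's intended argument. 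Two small remarks. First, in \eqref{eq2} the oriented character is used twice, and you correctly flag both uses: once to make $m_1(s)$ measurable with respect to $\cH_1$, and once to make $\{\s_s(y_i)\}_{i\geq 2}$ measurable with respect to $\cH_1$; the second use is what lets you pull $\mathds{1}_{\{y_2,\dots,y_k\}\subset\cZ(\s_s)}$ outside the conditional expectation, and it would indeed fail for a symmetric (say FA--1f) constraint. Second, for \eqref{eq2.1} your Mecke/Campbell bound actually delivers $k\,t\,q^k$, which is stronger than the stated $a\,t\,q^k$ whenever $k\leq a$; but since $V=[0,a]$ has $a+1$ points, the hypothesis only forces $k\leq a+1$, so your final inequality $k\,t\,q^k\leq a\,t\,q^k$ can fail by one unit in the degenerate case $k=a+1$. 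This is an off-by-one matter of conventions with no bearing on any use of the lemma in the paper (where the prefactor is always absorbed into unspecified constants), but if you want a statement matching the lemma verbatim you should either note that $k\leq a$ in all applications, or replace the right-hand side of \eqref{eq2.1} by $|V|\,t\,q^k$, or simply keep the sharper bound $k\,t\,q^k$.
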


\begin{Lemma}\label{rates}
Let $T_n=(1/q)^{(n-1)(1+3\e)}$ for some fixed parameter $\e \in (0,1)$.
Let $\sigma_{0\mathds{1}}$ be the configuration, on $[1,d]$, with only one empty site at $1$,
and $\tilde \tau:= \inf \{ s: \sigma_s(1)=1\}$.Then,
$$
\bbP_{\sigma_{0\mathds{1}}}^{[1,d]}(\tilde \tau > T_n) \leq \exp \{ -cT_n/t_n\}
$$
for some constant $c=c(d,\e)$ that does not depend on $q$.
\end{Lemma}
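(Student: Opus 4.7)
The plan is to identify $\tilde\tau$ with the hitting time of the set $A:=\{\eta\in\Omega_{[1,d]}:\eta(1)=1\}$, which has stationary mass $\pi_{[1,d]}(A)=p$. Lemma~\ref{prop:amine} applied to the finite-volume East chain on $[1,d]$ then gives the equilibrium tail bound $\bbP^{[1,d]}_{\pi_{[1,d]}}(\tau_A>t)\leq\exp(-t\,p\,\gap(\cL_{[1,d]}))$. To transfer this estimate from the equilibrium start to the non-equilibrium initial condition $\sigma_{0\mathds{1}}$, I use the elementary pointwise domination $\bbP^{[1,d]}_\sigma(E)\leq\bbP^{[1,d]}_{\pi_{[1,d]}}(E)/\pi_{[1,d]}(\sigma)$ (valid for any event $E$ in the future $\sigma$-algebra, since $\bbP_{\pi_{[1,d]}}(E)=\sum_\eta\pi_{[1,d]}(\eta)\bbP_\eta(E)\geq\pi_{[1,d]}(\sigma)\bbP_\sigma(E)$). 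Since $\pi_{[1,d]}(\sigma_{0\mathds{1}})=qp^{d-1}$, this transfer costs only an additive correction of order $\log(1/q)$ (with a constant depending on $d$) in the exponent.

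The crucial quantitative input is the finite-volume spectral gap lower bound $\gap(\cL_{[1,d]})\geq c(d)\,q^n$ with $c(d)>0$ independent of $q$. This \emph{cannot} be obtained from Proposition~\ref{prop:monotony} combined with Theorem~\ref{th:gap}(ii), since the infinite-volume gap decays like $q^{(\log_2(1/q))^2/(2\log 2)}$, much faster than any fixed power $q^n$ as $q\downarrow 0$. The finite length of the interval, $d\leq 2^n$, however, truncates the hierarchical energy landscape responsible for the slow infinite-volume relaxation at depth $n$. Concretely, the bisection-constrained recursion $\gamma_k\leq(1-\sqrt{\e_k})^{-1}(1+1/s_k)\gamma_{k-1}$ from the proof of Theorem~\ref{th:cmrt-gap} is iterated only $n$ times: in the resulting product the $(1+1/s_k)$ factors are uniformly bounded, while in the pre-saturation regime $k\leq j^\star\sim\log_2(1/q)$ each $(1-\sqrt{\e_k})^{-1}$ is of order $(q\delta_k)^{-1}$, so the whole product is controlled by $C(d)\,q^{-n}$. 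For $n$ beyond the saturation scale, the infinite-volume gap itself is already $\geq c(d)q^n$, so the bound holds in both regimes.

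Combining the pieces, the choice $t=T_n$ and the identity $q^n=1/t_n$ turn the exponent into $-c(d)p\,T_n/t_n+O(\log(1/q))$. Since $T_n/t_n=(1/q)^{3\e(n-1)-1}$ grows super-polynomially in $1/q$ whenever $3\e(n-1)>1$, the main term dominates the additive correction for $q$ small, giving the claimed bound $\exp(-c(d,\e)T_n/t_n)$; the finitely many values of $n$ with $3\e(n-1)\leq 1$ are absorbed by enlarging the constant. The main obstacle is the $q^n$-scaling of the finite-volume gap: a naive combinatorial path argument through the states of $V(n)$ in Proposition~\ref{prop:comb} would only yield $q^N$ with $N$ of order $2^n$, since the hierarchical route there uses $\Theta(2^n)$ intermediate $1\!\to\!0$ flips; extracting the sharp exponent $n$ forces us to exploit reversibility and the spectral structure of the chain via the truncated bisection just described.
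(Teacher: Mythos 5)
Your overall strategy (the equilibrium hitting-time bound of Lemma \ref{prop:amine} followed by the change of measure $\bbP^{[1,d]}_\sigma(E)\leq \bbP^{[1,d]}_{\pi_{[1,d]}}(E)/\pi_{[1,d]}(\sigma)$) cannot prove the lemma in the regime where it is actually needed. Since $T_n/t_n=(1/q)^{3\e(n-1)-1}$, for $\e$ small (which is how $T_n$ is chosen in \cite{FMRT-cmp}, and is forced in the application to Claim \ref{inferno1}, where $M=\d t_n/[(j-2)T_n]$ must be a large integer) one has $T_n/t_n\to 0$ as $q\downarrow 0$. The lemma is then really the \emph{lower} bound $\bbP_{\sigma_{0\mathds{1}}}^{[1,d]}(\tilde\tau\leq T_n)\geq c\,T_n/t_n\,(1+o(1))$ on the probability of the rare event that the zero at $1$ is killed within the comparatively short time $T_n$. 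Your chain of inequalities yields $\bbP_{\sigma_{0\mathds{1}}}^{[1,d]}(\tilde\tau>T_n)\leq (qp^{d-1})^{-1}\exp\{-p\,T_n\,\gap(\cL_{[1,d]})\}$; the exponent is $O(T_n/t_n)=o(1)$ while the prefactor $(qp^{d-1})^{-1}$ diverges, so the right-hand side exceeds $1$ and the bound is vacuous. No $q$-independent constant $c(d,\e)$ repairs this: for each fixed $n$ with $3\e(n-1)\leq 1$ the inequality must hold for all small $q$, so these cases cannot be ``absorbed by enlarging the constant''. Your argument is only meaningful when $T_n/t_n\gg\log(1/q)$, which is the opposite of the situation in which the lemma is invoked.

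There is a second, independent problem with the key input $\gap(\cL_{[1,d]})\geq c(d)q^{n}$. Truncating the bisection--constrained recursion of Theorem \ref{th:cmrt-gap} after $n$ steps does not give a $q$-independent constant $c(d)$: the recursion bottoms out at $\g_{k_0-1}$, which the paper controls only by the crude coupling bound $\g_{k_0}\leq (1/q)^{\a_\d}$ with $\a_\d$ of order $2^{k_0}$ (one empties an interval of length $\sim 2^{k_0}$ site by site), and this contributes a genuine extra power $q^{-\a_\d}$ to the relaxation time --- exactly the origin of the factor $q^{C_\d}$ in \eqref{eq:th1}. Moreover, when $n<k_0$ the recursion never starts. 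The bound $\gap(\cL_{[1,d]})\asymp_{d} q^{n}$ is true and could be extracted from a bisection with overlap of a single site (so that each of the $n$ halvings costs a factor $\leq C/q$ and the base case is a free single spin), but even then your proof would only cover the regime $3\e(n-1)>1$. The sharp statement that the killing of the zero occurs at rate $\sim \l_n(d)\,q^{n}$, valid also as a lower bound on scales $T_n\ll t_n$, is precisely the nontrivial fact the paper imports wholesale from \cite[Lemma 3.4 and Eq.\ (3.4)]{FMRT-cmp}; it requires a dedicated renormalization/graphical argument (independent attempts on time windows of length $\asymp t_{n-1}$, each succeeding with probability $\asymp q$) rather than a soft spectral estimate.
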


\begin{proof}[Proof of Lemma \ref{rates}]
The result of Lemma \ref{rates} follows from \cite[Lemma 3.4]{FMRT-cmp} together with the definition of $\l_n(d)$
given in \cite[Equation (3.4)]{FMRT-cmp}.
\end{proof}

\begin{remark}
Note that, in \cite[Lemma 4.2]{FMRT-cmp}, the result holds for configurations living in $\Omega_{\bbZ_+}$. However, the proof
can easily be adapted to $\Omega=\Omega_\bbZ$ as stated in Lemma \ref{zeri}.
\end{remark}


\subsection*{Acknowledgements}
We  thank   the Laboratoire de Probabilit\'{e}s et Mod\`{e}les
Al\'{e}atoires, the University Paris VII and the Department of
Mathematics of the University of Roma Tre for the  support and the
kind hospitality.
C. Toninelli acknowledges the partial support of the
French Ministry of Education
through the ANR BLAN07-2184264 grant.

\bibliographystyle{amsalpha}
\bibliography{East}

\end{document}